\DeclarePairedDelimiter\floor{\lfloor}{\rfloor}
\newcommand{\xMapsto}[2][]{\ext@arrow 0599{\Mapstofill@}{#1}{#2}}
\def\Mapstofill@{\arrowfill@{\Mapstochar\Relbar}\Relbar\Rightarrow}
\numberwithin{equation}{section}
\newtheorem{Proposition}[equation]{Proposition}
\newtheorem{Lemma}[equation]{Lemma}
\newtheorem{Theorem}[equation]{Theorem}
\newtheorem{Corollary}[equation]{Corollary}
\theoremstyle{definition}  %% makes all of the theorem environments which follow appear in \rm
\newtheorem{Definition}[equation]{Definition}
\newtheorem{Remark}[equation]{Remark}
\newtheorem{Example}[equation]{Example}
\newcommand\Comment[2][\relax]{\space\par\medskip\noindent%
   \fbox{\begin{minipage}{\textwidth}\textbf{Comment\ifx\relax#1\else---#1\fi}\newline%
        #2\end{minipage}}\medskip
}
\newcommand{\hackcenter}[1]{
 \xy (0,0)*{#1}; \endxy}
\def\bfx{\text{\boldmath$x$}}
\def\bfsig{\text{\boldmath$\sigma$}}
\def\bfempty{\text{\boldmath$\emptyset$}}
\def\bfp{\text{\boldmath$p$}}
\def\bfq{\text{\boldmath$q$}}
\def\bfr{\text{\boldmath$r$}}
\def\bfb{\text{\boldmath$b$}}
\def\bfy{\text{\boldmath$y$}}
\def\bfY{\text{\boldmath$Y$}}
\def\b1{\text{\boldmath$1$}}
\def\pmod#1{\text{ }(\text{\rm mod } #1)\,}
\newcommand{\Z}{\mathbb{Z}}
\def\phi{{\varphi}}
\newcommand{\la}{\langle}
\newcommand{\ra}{\rangle}
\newcommand{\IPGbl}{\operatorname{IP}_{G,b,\ell}}
\newcommand{\Tabqinf}{\operatorname{Tab}_{G,b,\ell}^{\bfq,\infty}}
\newcommand{\Tablq}{\operatorname{Tab}_{G,b,\ell}^{\preceq \mathbf{q}}}
\newcommand{\Tab}{\operatorname{Tab}_{G,b,\ell}}
\def\b{\mathfrak{b}}
\def\k{\Bbbk}
\theoremstyle{remark}
\def\height{{\operatorname{ht}}}
  \gdef\set#1{\mathinner{\lbrace\,{\mathcode`\|"8000%
  \let|\midvert #1}\,\rbrace}}
\def\midvert{\egroup\mid\bgroup}
\colorlet{darkgreen}{green!50!black}
\tikzset{dots/.style={very thick,loosely dotted},
         greendot/.style={fill,circle,color=darkgreen,inner sep=1.5pt,outer sep=0}
}
\def\greendot(#1,#2){\node[greendot] at(#1,#2){}}
\newenvironment{braid}{% sets defaults for the braid diagrams
  \begin{tikzpicture}[baseline=6mm,blue,line width=1pt, scale=0.4,
                      draw/.append style={rounded corners},
                      every node/.append style={font=\fontsize{5}{5}\selectfont}]%
  }{\end{tikzpicture}
}
\def\Grid(#1,#2){%  draws a coordinate grid inside a braid diagram
  \draw[very thin,gray,step=2mm] (0,0)grid(#1,#2);
  \draw[very thin,darkgreen,step=10mm] (0,0)grid(#1,#2);
}
\newcommand\Tableau[2][\relax]{
  \begin{tikzpicture}[scale=0.5,draw/.append style={thick,black}]
    \ifx\relax#1\relax%
    \else % shade the boxes in #1
      \foreach\box in {#1} { \filldraw[blue!30]\box+(-.5,-.5)rectangle++(.5,.5); }
    \fi
    \newcount\row\newcount\col
    \row=0
    \foreach \Row in {#2} {
       \col=1
       \foreach\k in \Row {
          \draw(\the\col,\the\row)+(-.5,-.5)rectangle++(.5,.5);
          \draw(\the\col,\the\row)node{\k};
          \global\advance\col by 1
       }
       \global\advance\row by -1
    }
  \end{tikzpicture}
}
\newcommand\YoungDiagram[2][\relax]{
  \begin{tikzpicture}[scale=0.5,draw/.append style={thick,black}]
    \ifx\relax#1\relax%
    \else % shade the boxes in #1
    \foreach\box in {#1} {
      \filldraw[blue!30]\box rectangle ++(1,1);
    }
    \fi
    \newcount\row
    \row=0
    \foreach \col in {#2} {
       \draw(1,\the\row)grid ++(\col,1);
       \global\advance\row by -1
    }
  \end{tikzpicture}
}
\begin{document}

%\Comment[AM]{I've added a \texttt{$\backslash$Comment\{\}} macro to help us write standout notes/comments/queries/etc to each other in the file. To mark them as your comments use something like \texttt{$\backslash$Comment[Sasha]\{\dots\}}  or  \texttt{$\backslash$Comment[Arun]\{\dots\}} etc.}

%%fakesection { title }
\title[The configuration space of a robotic arm over a graph]{{\bf The configuration space of a robotic arm over a graph}}

\author{\sc Derric Denniston}
\address{Washington \& Jefferson College\\ Washington\\ PA~15301, USA}
\email{dennistond@washjeff.edu}

\author{\sc Robert Muth}
\address{Department of Mathematics\\ Washington \& Jefferson College\\ Washington\\ PA~15301, USA}
\email{rmuth@washjeff.edu}

\author{\sc Vikram Singh}
\address{Washington \& Jefferson College\\ Washington\\ PA~15301, USA}
\email{singhvg@washjeff.edu}

%\subjclass[2010]{16G99}

%\thanks{Research supported by the NSF grant DMS-1161094 and the Humboldt Foundation.}

\begin{abstract}
We investigate the configuration space \(\mathcal{S}_{G,b,\ell}\) associated with the movement of a robotic arm of length \(\ell\) on a grid over an underlying graph \(G\), anchored at a vertex \(b \in G\). We study an associated PIP (poset with inconsistent pairs) \(\IPGbl\) consisting of indexed paths on \(G\). This PIP acts as a combinatorial model for the robotic arm, and we use \(\textup{IP}_{G,b,\ell}\) to show that the space \(\mathcal{S}_{G,b,\ell}\) is a CAT(0) cubical complex, generalizing work of Ardila, Bastidas, Ceballos, and Guo. This establishes that geodesics exist within the configuration space, and yields explicit algorithms for moving the robotic arm between different configurations in an optimal fashion. We also give a tight bound on the diameter of the robotic arm transition graph---the maximal number of moves necessary to change from one configuration to another---and compute this diameter for a large family of underlying graphs \(G\).
\end{abstract}

\maketitle

\section{Introduction}

In \cite{ABCG}, Ardila, Bastidas, Ceballos and Guo investigate the motion of a `robotic arm in a tunnel'. This robotic arm consists of a series of linked segments on a 2-dimensional \(m \times n\) grid capable of certain local movements. They use `coral tableaux' as a combinatorial model to study the configuration space of this robotic arm, and establish that this configuration space is a CAT(0) cubical complex. Our goal in the present paper is to prove similar results in the more general setting of `robotic arms over graphs'.

\begin{figure}[h]
\includegraphics[width=9cm]{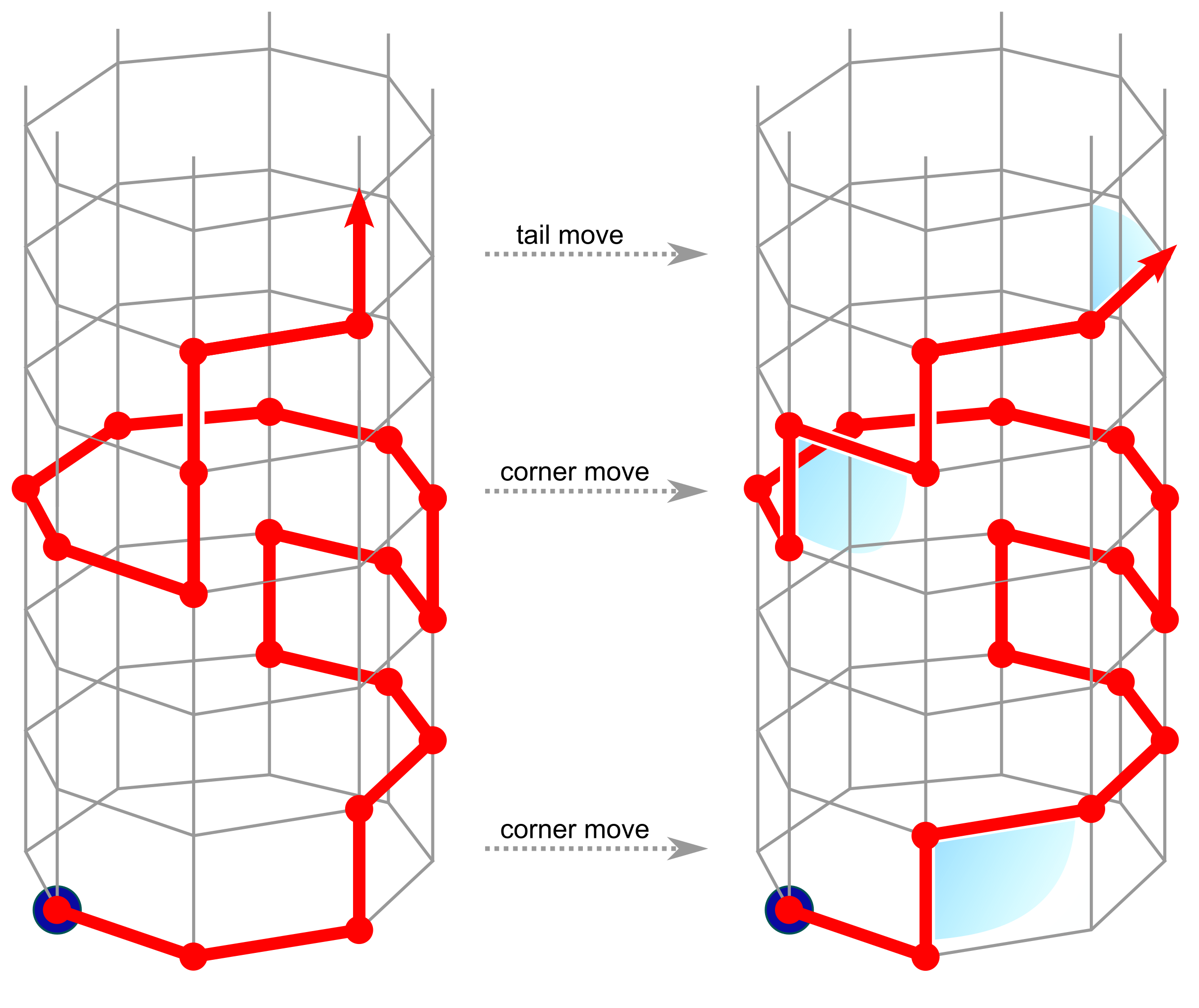}
\caption{On the left, a robotic arm configuration of length 20 over the cycle graph \(C_8\);  on the right, a new configuration achieved by applying three local moves to the configuration.}
\label{fig:configintro}       % Give a unique label
\end{figure}

\subsection{Robotic arms over graphs}\label{robint} For readability, we describe our setup somewhat informally in this introduction, and refer the reader to \S\ref{robotarms} for precise definitions. For a connected graph \(G\) with no loops or multiple edges, we consider \(G\) as the `ground floor' in the {\em workspace} \(W_G\) formed by extending an infinite rectangular grid above \(G\). In other words, \(W_G\) is an infinite stack of `floors' of \(G\), with vertical struts connecting the floors. The robotic arm \(\mathcal{R}_{G,b,\ell}\) is a sequence of \(\ell\) non-intersecting linked segments of nondecreasing height in \(W_G\), anchored at a vertex \(b\) in the ground floor \(G\). The robotic arm is capable of performing local moves which change its configuration, such as rotating its tail, and translating corners across squares in the grid \(W_G\). In Figure~\ref{fig:configintro}, we depict a configuration of the robotic arm of length \(\ell = 20\), over the cycle graph \(G = C_8\), along with a number of possible local moves available from this configuration. 

We note that this setup generalizes the `robotic arm in a tunnel', in the sense that one may view the robotic arms in \cite{ABCG} as the special case of \(G\) being the path graph \(A_m\) with \(m\) vertices, with anchor vertex \(b\) positioned at an endpoint. When \(G\) is a cycle graph, we may view \(\mathcal{R}_{G,b,\ell}\) as a `robotic arm on a cylinder', as in Figure~\ref{fig:configintro}.

\subsection{Configuration space}\label{configintro}
As in \cite{AG, ABY, AOS, Reeves, ABCG}, one may define a cubical complex \(\mathcal{S}_{G,b,\ell}\) associated with \(\mathcal{R}_{G,b,\ell}\); the 0-skeleton of this complex consists of the various configurations of the robotic arm, and the \(k\)-cubes correspond to \(k\)-sets of independent local moves for a given configuration---for instance, in Figure~\ref{fig:configintro}, note that the three highlighted local moves are independent, in that they may be performed simultaneously, or in any order, to arrive at the same configuration. See Figure~\ref{fig:fullspace} for a complete visualization of the configuration space of the length-5 robotic arm over the cycle graph \(C_3\).
Via the configuration space, one may translate natural questions about robotic arm movements into geometric questions about the space \(\mathcal{S}_{G,b,\ell}\), such as the following:\\

\vspace{-3mm}
\noindent{\bf Question.} For any two configurations, is there a process by which one may optimally move the robotic arm from one configuration to another:
\begin{enumerate}
\item[(I)] in a minimal number of total local moves, or; 
\item [(II)] in minimal  time, given that independent local moves can be performed simultaneously?
\end{enumerate}
As explained in \cite[\S5]{A}, questions (I) and (II) may be interpreted as a search for geodesics in \(\mathcal{S}_{G,b,\ell}\) under the \(L_1\)- and \(L_\infty\)-metrics, respectively. 

\subsection{The PIP of indexed paths}
In \S\ref{tabsec} we introduce a {\em PIP} (poset with inconsistent pairs) \(\IPGbl\), which consists of combinatorial objects called `indexed paths'. See Figure~\ref{fig:indpathfull} for the Hasse diagram of \(\IPGbl\) in the case of the length-5 robotic arm over the cycle graph \(C_3\). Following \cite{AOS, ABCG}, we define an associated cubical complex \(\mathcal{X}(\IPGbl)\) whose 0-skeleton consists of the consistent lower sets in \(\IPGbl\), and whose \(k\)-cubes correspond to \(k\)-sets of maximal elements within a given lower set. 

\subsection{Main results}\label{mainres}
Our first main result, which appears as Theorem~\ref{cubeisom} in the text, is as follows:

\begin{Theorem}\label{bigthm1}
There is an explicit isomorphism of cubical complexes \(\mathcal{X}(\textup{IP}_{G,b,\ell}) \cong \mathcal{S}_{G,b,\ell}\).
\end{Theorem}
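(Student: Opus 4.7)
The plan is to establish the isomorphism in two stages: first on 0-skeleta, then extend across the full cube structure.

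First, I would construct an explicit map $\Phi : \mathcal{S}_{G,b,\ell} \to \mathcal{X}(\IPGbl)$ on 0-skeleta. Given a configuration of the robotic arm $\mathcal{R}_{G,b,\ell}$, each segment of the arm sits on some floor of $W_G$ and projects down to an edge (or to the anchor) of $G$, and the sequence of such projections, read from the anchor outward, traces a walk in $G$. The natural thing is to associate to each segment an \emph{indexed path}: the initial portion of the walk up to that segment, together with an index recording the floor height (and whatever local data the definition in \S\ref{tabsec} packages into an indexed path). The set $\Phi(\mathcal{C})$ of all indexed paths associated to segments of a configuration $\mathcal{C}$ should then be a lower set: a segment at height $k$ of a given walk can only be present if the lower segments supporting it below are also present. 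Consistency should follow from the non-self-intersecting and non-decreasing-height conditions on the arm, which are precisely what the inconsistent pairs in the PIP are designed to forbid.

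Next, I would prove $\Phi$ is a bijection between configurations and consistent lower sets of $\IPGbl$. Injectivity is immediate, since the indexed paths record all of the information of the arm. For surjectivity, given a consistent lower set $L \subseteq \IPGbl$, one reconstructs an arm segment-by-segment: the maximal elements of $L$ indicate where the arm ``terminates'' along each branch, and the lower-set property guarantees that lower segments are always present to anchor them, while consistency rules out geometric incompatibilities (two segments occupying the same slot in $W_G$, or height decreasing along the arm). The length of the arm equals $\ell$ precisely because $\IPGbl$ only includes indexed paths up to combinatorial length $\ell$.

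Then I would identify the local moves at $\mathcal{C}$ with the maximal elements of $\Phi(\mathcal{C})$. The local moves described in \S\ref{robint}—tail rotations and corner translations—each remove a single ``topmost'' piece of the arm and replace it by another; in the PIP picture, this corresponds to swapping one maximal element of the lower set for another. Thus the $1$-cubes incident to $\mathcal{C}$ in $\mathcal{S}_{G,b,\ell}$ are in bijection with the maximal elements of $\Phi(\mathcal{C})$. Finally, a $k$-set of local moves at $\mathcal{C}$ is independent (simultaneously performable) if and only if the corresponding $k$ maximal elements are pairwise consistent and can be removed in any order to yield lower sets; this matches exactly the definition of a $k$-cube in $\mathcal{X}(\IPGbl)$. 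Gluing the cubes by these identifications yields the cubical isomorphism.

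The main obstacle I expect is the surjectivity/geometric-realization step: verifying that every consistent lower set genuinely assembles into a valid arm in $W_G$ without self-intersection or illegal height profile. This is where the design of the inconsistent pairs in $\IPGbl$ has to be exactly right---too few and one reconstructs invalid arms, too many and some arms are missed. I would handle this by a careful induction on the length of the arm (equivalently, on the size of the lower set), using the consistency relations to control how each newly added segment can be attached.
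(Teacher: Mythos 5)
Your overall strategy — build a bijection on $0$-skeleta between configurations and consistent lower sets of $\IPGbl$, then match local moves with maximal elements and independent move-sets with $k$-cubes — is indeed the shape of the paper's argument, and your description of the final step (moves $\leftrightarrow$ maximal elements of $\Sigma(\bfx)$) is essentially correct. But the place you flag as ``the main obstacle,'' the surjectivity/well-definedness of the map from configurations to consistent lower sets, is exactly where your plan is missing the key idea, and your proposed fix (``a careful induction on the length of the arm'') does not clearly converge to a proof.

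The paper does not attack surjectivity directly. Instead it routes the whole $0$-skeleton bijection through distributive lattice theory. For each $(G,b)$-path $\bfq$ one forms the lattice $\Tabqinf$ of extended $\bfq$-tableaux (weakly increasing labelings with an $\infty$ symbol), shows it is a distributive lattice under a min/max meet and join, shows that its join-irreducible elements are exactly the tight tableaux $\tau(\la\bfp,a\ra)$ with $\bfp\preceq^{\textup{pre}}\bfq$ (Lemma~\ref{JIq}), and then applies Birkhoff's Theorem~\ref{Birk} to obtain a canonical isomorphism $\Tabqinf\cong\mathcal{L}(J(\Tabqinf))\cong\mathcal{L}(\IPGbl^{\preceq\bfq})$. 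Composed with the bijection $\mathcal{V}_{G,b,\ell}^{\preceq\bfq}\to\Tab^{\preceq\bfq}$ (Lemma~\ref{VTabbij}) and glued over $\bfq$, this produces the $0$-skeleton bijection $\Sigma$ with surjectivity coming \emph{for free} from Birkhoff: every lower set of join-irreducibles is realized, hence every consistent lower set of indexed paths corresponds to an actual configuration. Your proposal never constructs the lattice $\Tabqinf$, never isolates join-irreducibility (which is exactly what ``tight'' tableaux capture), and never invokes Birkhoff, so the hard direction is left as an assertion that a segment-by-segment reconstruction ``works.'' Without a structural reason, you would have to re-derive much of Birkhoff's theorem by hand in this specific setting, and the informal induction as stated doesn't control all the constraints simultaneously (cycle-freeness within a floor, weak height increase, total length budget $\ell$).

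Two smaller cautions. First, your description of $\Phi(\mathcal{C})$ reads as if the set of indexed paths directly attached to the segments already is a lower set; it is not. In the paper's map $\Sigma$, those indexed paths $\la\bfsig^{(\bfx,i)},a^{(\bfx,i)}\ra$ are only the \emph{generators}, and one must explicitly take the lower set $\mathcal{I}\{\cdots\}$ they generate. Second, only the \emph{horizontal} segments of the arm contribute generators; the vertical segments do not. Both points are easy to fix, but as written they would leave $\Phi$ either ill-defined or not landing in $\mathcal{L}_{\textup{con}}(\IPGbl)$.
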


As a byproduct, Theorem~\ref{bigthm1} establishes a number of useful results about \(\mathcal{S}_{G,b,\ell}\):
\begin{enumerate}
\item The cubical complex \(\mathcal{S}_{G,b,\ell}\) is a CAT(0) complex; it satisfies a certain non-positive curvature condition and is guaranteed to possess a unique geodesic between any two points (see Theorem~\ref{pipcat}).
\item We have positive and explicit answers to questions (I) and (II) above: Theorem~\ref{bigthm1}, in conjunction with the algorithms in \cite{ABY} can be used to optimally reconfigure the robotic arm (see Corollary~\ref{optimove}).
\end{enumerate}

\subsection{Diameter of the transition graph}
Another natural problem is to determine the maximum number of local moves needed to move the robotic arm \(\mathcal{R}_{G,b,\ell}\) from any configuration to any other. This value may be interpreted as the diameter of \(\mathcal{S}_{G,b,\ell}\) under the \(L_1\)-metric, or alternatively the diameter of the transition graph \(\mathcal{T}_{G,b,\ell}\)---the 1-skeleton of \(\mathcal{S}_{G,b,\ell}\). Our second main result, which appears as Theorem~\ref{diamthm} in the text, establishes a tight bound for this diameter:

\begin{Theorem}\label{bigthm2}
Let \(n\) be the number of vertices in \(G\). Then we have
\begin{align}\label{introineq}
\textup{diam}(\mathcal{T}_{G,b,\ell}) \leq 2 \left\lfloor \frac{(n - 1)(\ell + 1)^2}{2n}\right \rfloor,
\end{align}
where equality is achieved if there exist two cycle-free paths in \(G\) of length \(\min\{\ell, n-1\}\), originating at \(b\), with distinct initial edges.
\end{Theorem}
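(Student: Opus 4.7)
The strategy is to translate the diameter question into a purely combinatorial optimization on the PIP \(\IPGbl\) via Theorem~\ref{bigthm1}, and then exploit a layered structure on indexed paths to bound the maximum symmetric difference of two consistent lower sets.

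After identifying \(\mathcal{T}_{G,b,\ell}\) with the 1-skeleton of \(\mathcal{X}(\IPGbl)\), I would use the standard fact about PIP cubical complexes that two vertices are joined by an edge iff the corresponding consistent lower sets differ by a single (necessarily maximal) element. The \(L_1\)-distance between two vertices is then the cardinality of the symmetric difference of the associated lower sets, so
\[
\textup{diam}(\mathcal{T}_{G,b,\ell}) \;=\; \max_{L_1, L_2} |L_1 \triangle L_2|,
\]
the maximum taken over pairs of consistent lower sets in \(\IPGbl\). A short argument shows that this maximum is always attained by a pair with \(L_1\cap L_2 = \emptyset\), for which \(|L_1 \triangle L_2| = |L_1|+|L_2|\), so the task reduces to maximizing \(|L_1|+|L_2|\) over disjoint consistent lower sets.

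The next step is to stratify \(\IPGbl\) by a natural invariant of an indexed path---its length together with the vertex of \(G\) at which it terminates---and to prove a stratum-by-stratum upper bound on \(|L_1 \cap \Lambda| + |L_2 \cap \Lambda|\). The inconsistency relation encodes precisely when two configurations of the arm compete for the same space in \(W_G\); in particular, around each vertex \(v\) of \(G\), the \(n\) locally available directions distribute the indexed paths into classes, only \(n-1\) of which can simultaneously be used by two disjoint consistent lower sets without forcing inconsistency. Aggregating these local constraints as the arm extends through successive heights \(k = 0, 1, \ldots, \ell\) should yield a quadratic bound whose leading term is \((n-1)(\ell+1)^2/n\), with the outer floor accounting for the integrality of counts of indexed paths.

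For the equality claim, I would build two explicit configurations from the hypothesized cycle-free paths \(\pi_1, \pi_2\) in \(G\) of length \(\min\{\ell,n-1\}\) from \(b\): send one extension of the arm along \(\pi_1\) and another along \(\pi_2\), each extended by repeated traversals of an incident cycle when \(\ell > n - 1\), and by additional vertical rotations otherwise. Because \(\pi_1\) and \(\pi_2\) start with different edges at \(b\), the two resulting consistent lower sets in \(\IPGbl\) are disjoint, and a direct count of indexed paths in each should exactly recover the right-hand side of \eqref{introineq}. I expect the main obstacle to be the sharp layer-wise inconsistency estimate: identifying the local combinatorial constraint that delivers the factor \((n-1)/n\) rather than a cruder bound of \(O((\ell+1)^2)\). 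Once that sharp local inequality is isolated, both the global summation and the verification that the explicit pair saturates the bound should be routine bookkeeping.
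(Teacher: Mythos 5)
The high-level plan mirrors the paper's: translate the diameter question into a bound on sizes of (consistent lower sets / tableau data), use a triangle-inequality argument to double it, and saturate the bound with two configurations whose paths diverge immediately at $b$. But the crucial ingredient that makes the quantitative bound work is misidentified. You locate the constraint ``around each vertex $v$ of $G$, the $n$ locally available directions distribute the indexed paths into classes, only $n-1$ of which can simultaneously be used.'' That is not what produces the factor $(n-1)/n$. The operative constraint is \emph{height-wise}, not vertex-wise: for a configuration $\bfx$ with tableau $(\overline{\bfx}, L_\bfx)$, Definition~\ref{deftab}(ii) forces the edges with a common label $k$ to form a cycle-free $G$-path, which can have at most $n-1$ edges since $|V_G| = n$. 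This gives the pointwise inequality $L_\bfx(i) \ge \lfloor (i-1)/(n-1)\rfloor$, and combined with Definition~\ref{deftab}(iii) the bound $\#\overline{\bfx} \le m$ where $m$ is maximal with $m + \lfloor (m-1)/(n-1)\rfloor \le \ell$. Substituting into $|\Sigma(\bfx)| = \sum_j M_\bfx(j) = \sum_j(\ell - L_\bfx(j) - j + 1)$ yields $|\Sigma(\bfx)| \le \omega(\ell,n)$, and separately one has to prove the nontrivial closed form $\omega(\ell,n) = \lfloor (n-1)(\ell+1)^2/(2n)\rfloor$ (the paper does this by induction on $\ell$ in steps of $n$). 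Your proposal gestures at a quadratic count with a floor ``for integrality,'' but the identity is not automatic and your stratification scheme does not obviously produce it.

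A secondary issue: your assertion that the maximum of $|L_1 \triangle L_2|$ is ``always attained'' by a disjoint pair $L_1 \cap L_2 = \emptyset$ is both unsupported and unnecessary. For the upper bound one only needs $|L_1 \triangle L_2| \le |L_1| + |L_2| \le 2\max_L |L|$ (equivalently, $d(\bfx,\bfy) \le d(\bfx,\bfb) + d(\bfb,\bfy)$, which is what the paper uses). For the equality case, one exhibits a specific disjoint pair, which the paper constructs by bouncing back and forth along the cycle-free path $\bfp$ (not by ``repeated traversals of an incident cycle'' nor ``additional vertical rotations''). You correctly flag the sharp layer-wise estimate as the main obstacle; as it stands, that obstacle is not overcome, so the proof is incomplete at exactly the step that carries the arithmetic content of the theorem.
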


This bound is tight in that there exist large families of graphs \(G\) wherein (\ref{introineq}) is an equality, such as cycle graphs, complete graphs, and any graphs possessing a Hamiltonian circuit.

\subsection{Methods}
Our methods in this paper are entirely combinatorial, and we follow the approach of \cite{ABCG} in establishing Theorem~\ref{bigthm1}, which is an analogue of \cite[Theorem 5.5]{ABCG}. We introduce \(G\)-path tableaux as generalizations of the coral tableaux studied in \cite[\S5.2]{ABCG}, and rely on arguments invoking distributive lattice theory and Birkhoff's Theorem in the same manner as \cite[\S5.3]{ABCG}. The possible presence of non-trivial cycles in the underlying graph \(G\) forces some more delicate and technical definitions of combinatorial objects and their associated partial orders than in \cite{ABCG}, so we err on the side of providing complete proofs of necessary results in this more complicated setting rather than rely solely on analogy with \cite{ABCG}, though the spirit of the argument remains the same.

\subsection{Acknowledgements} 
Some of this work was completed in the summer of 2021, while the first and third authors were supported by an endowed fund administered by the Washington \& Jefferson College Mathematics Department.

\section{Preliminaries}

In this section we fix some notation and basic definitions on graphs and posets. For \(a, b\in \Z \) we write \([a,b] := \{c \in \Z \mid a \leq b \leq c\}\).

\subsection{Graphs and paths}\label{graphpath}
A {\em graph} (with no loops or multiple edges) \(H = (V_H, E_H)\) consists of a set of {\em vertices} \(V_H\) and a set of {\em edges} \(E_H \subseteq \{\{v,w\} \mid v,w \in V_H, v \neq w\}\). %We say edges \(e_1, e_2 \in E_H\) are {\em linked} if \(r_H(e_1) \cap r_H(e_2)\neq \varnothing\). 
An {\em \(H\)-path} \(\bfp\) is the data of a {\em length} \(\#\bfp \in \Z_{\geq 0}\), a sequence of edges \((\bfp_1, \ldots, \bfp_{\#\bfp}) \in E_H^{\#\bfp}\), and a sequence of vertices \((p_0, \ldots, p_{\#\bfp})\) such that \(\bfp_i = \{p_{i-1}, p_i\}\) for \(i \in [1, \#\bfp]\). Thus \(\bfp\) is a traversal of the vertices \(p_0, \ldots, p_{\#\bfp}\) in order by traveling along the edges \(\bfp_1, \ldots, \bfp_{\#\bfp}\) in order. 

For \(x \in V_H\), we say that \(\bfp\) is an {\em \((H,x)\)-path} if \(p_0 = x\). We note that the go-nowhere path \(\bfempty^x\) based at \(x\), defined by \(\#\bfempty^x = 0\) and \(\emptyset^x_0 = x\) is an \((H,x)\)-path. We say that an \(H\)-path (resp. \((H,x)\)-path) is an \(H^+\)-path (resp. \((H,x)^+\)-path) if \(\#\bfp > 0\).

 If \(\bfp, \bfq\) are paths such that \(p_{\#\bfp} = q_0\), then we write \(\bfp \bfq\) for their concatenation, which has \(\#(\bfp \bfq) = \#\bfp + \#\bfq\), edge sequence \((\bfp_1, \ldots, \bfp_{\#\bfp}, \bfq_1, \ldots,, \bfq_{\#\bfq})\), and vertex sequence \((p_0, \ldots, p_{\#\bfp}, q_1, \ldots, q_{\#\bfq})\).
If \(\bfr = \bfp \bfq\), we say that \(\bfp\) is a prefix of \(\bfr\) and \(\bfq\) is a suffix of \(\bfr\). We also write in this case \(\bfp \preceq^\textup{pre} \bfr\), and note that \(\preceq^\textup{pre}\) defines a partial order on the set of all \(H\)-paths.

We say an \(H\)-path \(\bfp\) is {\em cycle-free} if \(p_0, \ldots, p_{\#\bfp}\) are distinct. For an \(H^+\)-path \(\bfp\), the {\em maximal-length cycle-free suffix decomposition of \(\bfp\)} is the data of a unique integer \(n_\bfp\) and unique set of \(n_{\bfp}\) \(H^+\)-paths \(\bfp^{(1)}, \ldots, \bfp^{(n_\bfp)}\) such that \(\bfp = \bfp^{(1)} \cdots \bfp^{(n_\bfp)}\) and \(\bfp^{(t)}\) is the maximal-length cycle-free suffix of \(\bfp^{(1)} \cdots \bfp^{(t)}\) for \(t \in[1, n_\bfp]\). For \(r \in [1,\#\bfp]\), we set \(d_{\bfp}(r)\) to be the unique integer \(t\in[1, n_\bfp]\) such that 
\begin{align*}
 r \in [\#\bfp^{(1)} + \cdots + \#\bfp^{(t -1)} + 1, \#\bfp^{(1)} + \cdots + \#\bfp^{(t )}],
\end{align*}
so that informally speaking, the \(r\)th edge \(\bfp_r\) in \(\bfp\) is an edge in \(\bfp^{(t)}\) when \(d_\bfp(r) = t\). See Figure~\ref{fig:pathdecex} for a visual depiction.

\begin{figure}[h]
\begin{align*}
\hackcenter{
\begin{tikzpicture}[scale=0.6]
\draw[black, fill =black]  (0,0) circle (3pt);
\draw[black, fill =black]  (3,1) circle (3pt);
\draw[black, fill =black]  (-1,2) circle (3pt);
\draw[black, fill =black]  (5,1) circle (3pt);
\draw[ thick, join=round, cap=round] (0,0)--(3,1)--(-1,2)--(0,0);
\draw[ thick, join=round, cap=round] (3,1)--(5,1);
  \node[above] at (0.2,0){ $\scriptstyle b$};    
   \node[above] at (3,1.3){ $\scriptstyle a$};    
    \node[below] at (-0.6,2){ $\scriptstyle c$};    
     \node[right] at (5,1){ $\scriptstyle d$};    
%%%
%%%
 \draw[ ultra thick, join=round, cap=round, red] (0,0) .. controls ++(.45,-0.2) and ++(-.25,-0.35) ..(3,1);
  \draw[ ultra thick, join=round, cap=round, red] (3,1) .. controls ++(.45,-0.45) and ++(-.45,-0.45) .. (5,1);
    \draw[ ultra thick, join=round, cap=round, red] (5,1) .. controls ++(-.45,0.45) and ++(+.45,0.45) .. (3,1.2);
       \draw[ ultra thick, join=round, cap=round, red] (3,1.2) .. controls ++(-.45,0.45) and ++(+.45,0.25) .. (-1,2);
        \draw[ ultra thick, join=round, cap=round, red] (-1,2) .. controls ++(-.45,-0.45) and ++(-.45,0.25) .. (-0.2,-0.2);
          \draw[ ultra thick, join=round, cap=round, red, ->] (-0.2,-0.2) .. controls ++(.45,-0.45) and ++(-.55,-0.5) .. (3,0.7);
\end{tikzpicture}
}
\;\;
\leadsto
\;\;
\hackcenter{
\begin{tikzpicture}[scale=0.5]
\draw[black, fill =black]  (0,0) circle (3pt);
\draw[black, fill =black]  (3,1) circle (3pt);
\draw[black, fill =black]  (-1,2) circle (3pt);
\draw[black, fill =black]  (5,1) circle (3pt);
\draw[ thick, join=round, cap=round] (0,0)--(3,1)--(-1,2)--(0,0);
\draw[ thick, join=round, cap=round] (3,1)--(5,1);
  \node[above] at (0.2,0){ $\scriptstyle b$};    
   \node[above] at (3,1.1){ $\scriptstyle a$};    
    \node[below] at (-0.55,2){ $\scriptstyle c$};    
     \node[right] at (5,1){ $\scriptstyle d$};    
%%%
%%%
 \draw[ ultra thick, join=round, cap=round, blue] (0,0) .. controls ++(.55,-0.2) and ++(-.25,-0.45) ..(3,1);
  \draw[ ultra thick, join=round, cap=round, blue, ->] (3,1) .. controls ++(.45,-0.45) and ++(-.45,-0.45) .. (4.9,0.9);
\end{tikzpicture}
}
\;\;
\hackcenter{
\begin{tikzpicture}[scale=0.5]
\draw[black, fill =black]  (0,0) circle (3pt);
\draw[black, fill =black]  (3,1) circle (3pt);
\draw[black, fill =black]  (-1,2) circle (3pt);
\draw[black, fill =black]  (5,1) circle (3pt);
\draw[ thick, join=round, cap=round] (0,0)--(3,1)--(-1,2)--(0,0);
\draw[ thick, join=round, cap=round] (3,1)--(5,1);
  \node[above] at (0.2,0){ $\scriptstyle b$};    
   \node[above] at (3,1.1){ $\scriptstyle a$};    
    \node[below] at (-0.55,2){ $\scriptstyle c$};    
     \node[right] at (5,1){ $\scriptstyle d$};    
%%%
%%%
    \draw[ ultra thick, join=round, cap=round, blue] (5,1) .. controls ++(-.45,0.45) and ++(+.45,0.45) .. (3,1);
       \draw[ ultra thick, join=round, cap=round, blue, ->] (3,1) .. controls ++(-.45,0.45) and ++(+.65,0.25) .. (-1,2.2);
\end{tikzpicture}
}
\hackcenter{
\begin{tikzpicture}[scale=0.5]
\draw[black, fill =black]  (0,0) circle (3pt);
\draw[black, fill =black]  (3,1) circle (3pt);
\draw[black, fill =black]  (-1,2) circle (3pt);
\draw[black, fill =black]  (5,1) circle (3pt);
\draw[ thick, join=round, cap=round] (0,0)--(3,1)--(-1,2)--(0,0);
\draw[ thick, join=round, cap=round] (3,1)--(5,1);
  \node[above] at (0.2,0){ $\scriptstyle b$};    
   \node[above] at (3,1.1){ $\scriptstyle a$};    
    \node[below] at (-0.55,2){ $\scriptstyle c$};    
     \node[right] at (5,1){ $\scriptstyle d$};    
%%%
%%%
        \draw[ ultra thick, join=round, cap=round, blue] (-1,2) .. controls ++(-.45,-0.45) and ++(-.45,0.25) .. (0,0);
          \draw[ ultra thick, join=round, cap=round, blue, ->, shorten >=0.15cm] (0,0) .. controls ++(.45,-0.45) and ++(-.55,-0.5) .. (3,1);
\end{tikzpicture}
}
\end{align*}
\caption{A path \(\bfp\) of length \(6\); the maximal length cycle-free suffix decomposition \(\bfp^{(1)} \bfp^{(2)} \bfp^{(3)}\) of \(\bfp\), where \(n_\bfp = 3\). We have \(d_\bfp(1) = d_\bfp(2) = 1\), \(d_\bfp(3) = d_\bfp(4) = 2\), and \(d_\bfp(5) = d_\bfp(6) = 3\). }
\label{fig:pathdecex}
\end{figure}
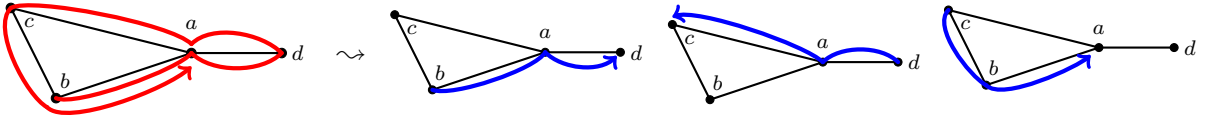

\subsection{Posets} 
A {\em partially ordered set} (or {\em poset}) is a set \(\lambda\) together with a binary relation \(\succeq\), which satisfies the following conditions for all \(u,v,w \in \lambda\):
\begin{enumerate}
\item  \(u \succeq u\) ({\em reflexivity});
\item \(u \succeq v\) and \(v\succeq u\) imply \(u = v\) ({\em antisymmetricity});
\item \(u \succeq v\) and \(v \succeq w\) imply \(u \succeq w\) ({\em transitivity}).
\end{enumerate}
We use \(a \succ b\) to indicate \(a \succeq b\) and \(a \neq b\). An {\em order-preserving} map of posets \(\lambda, \nu\) is a set map \(f: \lambda \to \nu\) such that \(f(u) \succeq f(v)\) whenever \(u \succeq v\). We say two posets \(\lambda, \nu\) are {\em isomorphic} and write \(\lambda \cong \nu\) if there exist mutually inverse order-preserving maps \(\lambda \rightleftarrows \nu\).

We say in \(\mu \subseteq \lambda\) is a {\em lower set} if \(u \in \mu\) whenever \(u \preceq v\) for some \(v \in \mu\). For \(\nu \subseteq \lambda\), we will write 
\begin{align*}
\mathcal{I}(\nu) := \{ u \in \lambda \mid u \preceq v \textup{ for some } v \in \nu\}
\end{align*}
for the lower set generated by \(\nu\).

\subsection{Distributive lattices} 
A {\em lattice} is a poset \(\lambda\) in which every pair of elements \(u,v \in \lambda\) has a unique supremum \(u \vee v\) called the {\em join}, and a unique infimum \(u \wedge v\) called the {\em meet}. We say that a lattice is {\em distributive} if, in addition, we have
\begin{align*}
u \wedge (v \vee w) = (u \wedge v) \vee (u \wedge w).
\end{align*}
We say that an element \(u\) in a distributive lattice \(\lambda\) is {\em join-irreducible} if \(u = v \vee w\) implies \(u \in \{v,w\}\). We will write \(J(\lambda)\) for the poset (under the induced partial order) of join-irreducible elements in \(\lambda\).

For a poset \(\mu\), we write \(\mathcal{L}(\mu)\) for the {\em distributive lattice of lower sets in \(\mu\)}. The elements of \(\mathcal{L}(\mu)\) are the lower sets in \(\mu\), the partial order in \(\mathcal{L}(\mu)\) is given by set inclusion, and join and meet are thus given by union and intersection respectively. We have the following fundamental result:

\begin{Theorem}[Birkhoff's Theorem \cite{Birk}]\label{Birk}
If \(\lambda\) is a finite distributive lattice then the map
\begin{align*}
B: \lambda \to \mathcal{L}(J(\lambda)), \qquad x \mapsto \{j \in J(\lambda) \mid j \preceq x\}
\end{align*}
is an isomorphism of distributive lattices.
\end{Theorem}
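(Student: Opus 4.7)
The plan is to prove Birkhoff's Theorem by constructing an explicit inverse map and verifying that both $B$ and its inverse are order-preserving bijections. Define
\[
B': \mathcal{L}(J(\lambda)) \to \lambda, \qquad S \mapsto \bigvee_{j \in S} j,
\]
with the convention that the empty join equals the minimum element $\hat 0 \in \lambda$, which exists since $\lambda$ is a finite lattice. My goal will be to verify that $B$ and $B'$ are mutually inverse and both order-preserving.

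First I would check the easy facts. For $x \in \lambda$, the set $B(x) = \{j \in J(\lambda) \mid j \preceq x\}$ is a lower set in $J(\lambda)$ because $\preceq$ is transitive, so $B$ is well-defined. If $x \preceq y$ then any $j \preceq x$ satisfies $j \preceq y$, so $B(x) \subseteq B(y)$ and $B$ is order-preserving. Similarly, if $S \subseteq T$ in $\mathcal{L}(J(\lambda))$, then $\bigvee_{j \in S} j \preceq \bigvee_{j \in T} j$, so $B'$ is order-preserving.

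The main technical step is to show $B' \circ B = \id_\lambda$ and $B \circ B' = \id_{\mathcal{L}(J(\lambda))}$. For the first, I need to prove that every $x \in \lambda$ satisfies $x = \bigvee\{j \in J(\lambda) \mid j \preceq x\}$. This I would establish by induction on the number of elements $u \in \lambda$ with $u \preceq x$: if $x$ is itself join-irreducible (including the base case $x = \hat 0$, viewed as the empty join), the statement is immediate; otherwise $x = u \vee v$ with $u, v \prec x$, and the inductive hypothesis applied to $u$ and $v$ yields the claim since $B(u) \cup B(v) \subseteq B(x)$. For the second identity, given a lower set $S \in \mathcal{L}(J(\lambda))$, I must show that the join-irreducibles $\preceq \bigvee_{j \in S} j$ are exactly the elements of $S$; the containment $S \subseteq B(B'(S))$ is trivial, and the reverse containment is where distributivity is essential.

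The main obstacle will be the reverse containment just mentioned: given a join-irreducible $k \preceq \bigvee_{j \in S} j$, I need to show $k \in S$. The idea is to apply distributivity to write
\[
k = k \wedge \bigvee_{j \in S} j = \bigvee_{j \in S} (k \wedge j),
\]
where the second equality extends the binary distributive law to arbitrary finite joins by induction on $|S|$. Since $k$ is join-irreducible, one of the meets $k \wedge j$ must equal $k$, forcing $k \preceq j$ for some $j \in S$; since $S$ is a lower set, $k \in S$ as required. Finally, the resulting bijection $B$ automatically preserves joins (unions) and meets (intersections) since it is an order-preserving bijection between lattices whose inverse is also order-preserving, so $B$ is an isomorphism of distributive lattices.
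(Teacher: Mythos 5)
The paper does not prove this statement: it is quoted as a classical result with a citation to Birkhoff's 1937 paper, so there is no internal argument to compare against. Your proof is the standard textbook argument (explicit inverse $B'(S)=\bigvee_{j\in S}j$, induction to show $x=\bigvee B(x)$, distributivity extended to finite joins to show $B(B'(S))\subseteq S$), and the structure is sound.

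One point worth flagging, though it is really an imprecision in the paper's conventions rather than a flaw in your reasoning: the paper defines $u$ to be join-irreducible when $u=v\vee w$ forces $u\in\{v,w\}$, which makes the bottom element $\hat 0$ join-irreducible. Under that literal reading, $B(\hat 0)=\{\hat 0\}\neq\varnothing$ while $\varnothing$ is a perfectly good lower set of $J(\lambda)$, so $B$ would fail to be surjective and the theorem as stated would be false. Your proof implicitly adopts the standard convention that $\hat 0\notin J(\lambda)$ (you treat $\hat 0$ as ``the empty join'' rather than as a join-irreducible element with $\hat 0\in B(\hat 0)$), which is the convention under which Birkhoff's theorem holds and which the paper presumably intends. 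It would be cleaner to state this exclusion explicitly at the start; once you do, both directions $B'\circ B=\mathrm{id}_\lambda$ and $B\circ B'=\mathrm{id}_{\mathcal L(J(\lambda))}$ go through exactly as you wrote them, and the final observation that an order-isomorphism of lattices automatically preserves meets and joins is correct.

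Two minor technical remarks you should make explicit rather than gesture at: (i) the induction for $x=\bigvee B(x)$ needs the observation that $u,v\prec x$ strictly decreases the size of the principal lower set, so the induction terminates; you state the decomposition $x=u\vee v$ but don't say why the hypothesis applies; and (ii) the extension of join-irreducibility from binary joins to arbitrary finite joins ($k=\bigvee_i w_i\implies k=w_i$ for some $i$) is itself a small induction that deserves a sentence, since the paper's definition is stated only for binary joins. Neither is a gap so much as an omitted routine verification.
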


\subsection{PIPs}
Following \cite{AOS, Winskel}, we now define PIPs and their associated cubical complexes.
A {\em poset with inconsistent pairs}, or {\em PIP}, is a poset \(\lambda\) together with an additional symmetric `inconsistency' relation \(\nleftrightarrow\) on \(\lambda\) satisfying the condition:
\begin{align*}
u \nleftrightarrow v \preceq w \qquad \implies\qquad u \nleftrightarrow w.
\end{align*}
We say that a lower set \(\mu \subseteq \lambda\) is {\em consistent} if there are no \(u,v \in \mu\) such that \(u \nleftrightarrow v\). Given a PIP \(\mu\), we write \(\mathcal{L}_{\textup{con}}(\mu)\) for the set of {\em consistent} lower sets in \(\mu\), with partial order given by inclusion.

\subsection{Cubical complexes}
A {\em cubical complex} is a polyhedral complex where all cells are \(k\)-cubes and all attaching
maps are injective. We may describe cubical complexes by defining a 0-skeleton of vertices, then inductively attaching \(k\)-cubes by noting the \(2^k\) vertices in the 0-skeleton that form the vertices of the \(k\)-cube, and the \(2k\) previously attached \((k-1)\)-cubes which serve as attaching faces for the \(k\)-cube. A {\em rooted} cubical complex has a designated root vertex. See \cite{Pratt, GP} for a more detailed discussion of the geometry of cubical complexes and their connection to automata, scheduling, and reconfiguration study.

In this paper we are particularly interested in cubical complexes which are {\em CAT(0)} metric spaces. A metric space \(X\) is said to be CAT(0) provided that there is a unique geodesic path in \(X\) between any two points, and \(X\) has non-positive global curvature. See \cite{AG, ABY, AOS, Reeves} for a complete discussion. Our main interest lies in the fact that when the configuration space of a robotic arm may be viewed as a CAT(0) cubical complex, there exists an explicit combinatorial algorithm (see \cite{ABY}) for optimally reconfiguring the robot.

\subsection{The rooted cubical complex associated to a PIP}
Let \(\lambda\) be a PIP. We now define a rooted cubical complex \(\mathcal{X}(\lambda)\) associated to \(\lambda\). The 0-skeleton of \(\mathcal{X}(\lambda)\) is the set \(\mathcal{L}_{\textup{con}}(\lambda)\). Cubes are added as follows. Let \(\mu \in \mathcal{L}_{\textup{con}}(\lambda)\). Let \(K \subseteq \mu\) be a subset of maximal elements in \(\mu\); i.e., \(u \in K\) implies \(u \preceq v\) for all \(v \in \mu\). Then the consistent lower sets \(\{ \nu \mid \mu \backslash K \subseteq \nu \subseteq \mu\}\) form the vertices of a \(|K|\)-cube \([\mu; K]\)  in \(\mathcal{X}(\lambda)\). 
The boundary of this cube is the collection of \(2|K|\) faces 
\(
\bigcup_{u \in K} 
[ \mu ; K \backslash \{u\}]
 \cup
[\mu \backslash \{u\}; K \backslash \{u\}].
\)
The empty set \(\varnothing\) serves as the root in the cubical complex \(\mathcal{X}(\lambda)\). See \cite[\S4.2]{ABCG} for a visual depiction of a PIP with associated cubical complex.

\subsection{PIPs and CAT(0) cubical complexes} As discussed, for instance in \cite{ABY, ABCG, Roller, Sageev, AOS}, PIPs prove to be a useful combinatorial framework for identifying and describing CAT(0) complexes, thanks to the following theorem:

\begin{Theorem}[\cite{Roller, Sageev, AOS}]\label{pipcat}
The map \(\lambda \mapsto \mathcal{X}(\lambda)\) is a bijection of PIPs and rooted CAT(0) cubical complexes.
\end{Theorem}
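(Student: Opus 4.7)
The plan is to invoke Gromov's link condition to establish the CAT(0) property for $\mathcal{X}(\lambda)$, and then to describe an inverse map from rooted CAT(0) cubical complexes back to PIPs via their hyperplane structure. Since the result is a combination of classical facts due to Roller and Sageev, together with the PIP formulation in \cite{AOS}, I would present it as a bijection established in two directions.

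First I would check that $\mathcal{X}(\lambda)$ is a well-defined cubical complex: each $k$-cube $[\mu;K]$ has exactly $2k$ faces of the form $[\mu;K\setminus\{u\}]$ and $[\mu\setminus\{u\};K\setminus\{u\}]$, and the face inclusions are compatible because consistency and the lower-set condition are preserved under removing elements. Next, to show $\mathcal{X}(\lambda)$ is CAT(0), I would apply Gromov's characterization: a cubical complex is CAT(0) iff it is simply connected and the link of every vertex is a flag simplicial complex. The link at $\mu\in\mathcal{L}_\textup{con}(\lambda)$ is encoded by the maximal elements that can be removed from $\mu$ together with the minimal elements above $\mu$ that can be adjoined while keeping the set a consistent lower set; the flag condition then reduces to the statement that pairwise consistency of candidate elements implies joint consistency, which is exactly the upward-propagation axiom $u\nleftrightarrow v\preceq w\Rightarrow u\nleftrightarrow w$ ensuring there is no hidden obstruction. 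Simple connectivity follows from a deformation retraction of $\mathcal{X}(\lambda)$ onto the root $\varnothing$ obtained by successively removing maximal elements from consistent lower sets, so in particular $\mathcal{X}(\lambda)$ is contractible.

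For the inverse direction, given a rooted CAT(0) cubical complex $X$, I would let $\lambda(X)$ be the set of hyperplanes of $X$, partially ordered by $H_1\preceq H_2$ if every combinatorial geodesic in the $1$-skeleton from the root to any vertex on the far side of $H_2$ must first cross $H_1$, and with $H_1\nleftrightarrow H_2$ if no vertex of $X$ lies on the far side of both $H_1$ and $H_2$ simultaneously. The standard Sageev--Roller theory of hyperplane separation in CAT(0) cube complexes guarantees that these relations satisfy the PIP axioms, and that each vertex of $X$ is determined by the consistent lower set of hyperplanes separating it from the root. The bijection is completed by verifying $\mathcal{X}(\lambda(X))\cong X$ and $\lambda(\mathcal{X}(\mu))\cong\mu$ as PIPs, where the latter amounts to identifying the hyperplanes of $\mathcal{X}(\mu)$ with the elements of $\mu$.

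The main obstacle will be the inverse direction: verifying that the hyperplane PIP $\lambda(X)$ actually reconstructs $X$. This rests on the nontrivial fact that in a CAT(0) cube complex, each vertex is uniquely determined by the set of hyperplanes on the root's side, and that this set is always a consistent lower set under the order above. Both facts are due to Sageev and Roller and use the convexity of hyperplane carriers together with the absence of triangles and bigons in CAT(0) geometry; rather than reproving this, I would cite \cite{Roller, Sageev} for the underlying duality and \cite{AOS} for its PIP-theoretic repackaging.
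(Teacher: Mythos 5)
The paper does not prove this theorem at all; it is stated as a known result and attributed to \cite{Roller, Sageev, AOS}, so there is no in-paper proof to compare your argument against. Your sketch is a reasonable outline of the standard Sageev--Roller--AOS argument, and the overall architecture (Gromov's link condition plus contractibility in the forward direction, hyperplane separation from the root in the reverse direction) is the right one.

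One step, however, is misattributed. You claim that the flag condition on vertex links ``reduces to... exactly the upward-propagation axiom $u\nleftrightarrow v\preceq w\Rightarrow u\nleftrightarrow w$.'' This is not quite right: the flag condition at a vertex $\mu\in\mathcal{L}_\textup{con}(\lambda)$ actually follows directly from the definition of how cubes are attached. If $\{s_1,\dots,s_k\}$ is a clique in the link of $\mu$, the pairwise squares already force pairwise consistency of any added elements and pairwise incomparability of the relevant elements, and from that the full cube exists; no appeal to the propagation axiom is needed. The propagation axiom instead serves a different purpose: it normalizes the inconsistency relation so that the map $\lambda\mapsto\mathcal{X}(\lambda)$ is injective. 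Without it, two distinct PIPs differing only on pairs $\{u,w\}$ that can never cohabit a lower set anyway (because some $v\prec w$ is already marked inconsistent with $u$) would yield identical complexes, breaking the bijection. So the axiom belongs to the well-definedness and injectivity of the bijection, not to the flag condition. Since the rest of your outline correctly defers the heavy lifting (reconstruction of $X$ from its hyperplane PIP) to \cite{Roller, Sageev, AOS}, this is the only place where the argument as written would need repair before it could be accepted as a genuine proof.
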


Thus, to prove that a (rooted) cubical complex is CAT(0), it suffices to demonstrate that it is isomorphic to \(\mathcal{X}(\lambda)\) for some PIP \(\lambda\). Following \cite{ABY}, one may use \(\lambda\) combinatorics as a `remote control' to algorithmically generate geodesics between vertices in \(\mathcal{X}(\lambda)\).

\section{Indexed paths and tableaux}\label{tabsec}
From this point forward, we fix a connected graph \(G\) with no loops or multiple edges, a vertex \(b \in V_G\), and a nonnegative integer \(\ell \in \Z_{\geq 0}\). In this section we define and study a number of combinatorial objects associated with the data \((G,b,\ell)\). The results in this section will establish a PIP which serves as a combinatorial model, or `remote control' for the workings of the robotic arm \(\mathcal{R}_{G,b,\ell}\) described in \S\ref{robint}.

\subsection{Indexed paths}
An {\em indexed path} will be a symbol of the form \(\la \bfp, a \ra\), where we define
\begin{align}\label{indpathspec}
\textup{IP}_{G,b,\ell} := 
\{\la\bfp, a\ra \mid \bfp \textup{ is a \((G,b)^+\)-path}, \; a \in [0, \ell +1 - \#\bfp - n_{\bfp}]\},
\end{align}
and refer to the elements of \(\IPGbl\) as {\em indexed \((G,b,\ell)\)-paths}. We will visually depict an indexed \((G,b,\ell)\)-path \(\langle \bfp, a \rangle\) as a path \(\bfp\) accompanied by a circled integer $\raisebox{.5pt}{\textcircled{\raisebox{-.9pt} {\(a\)}}}$.

\subsubsection{The PIP of indexed paths}\label{pipind}
We define relations on \(\IPGbl\) as follows. For \(\la\bfp, a\ra, \la\bfq, b\ra \in \IPGbl\), write \(\la\bfp,a\ra \preceq^\textup{IP} \la\bfq,b\ra\) provided that:
\begin{align*}
\textup{(i)}\;
\bfp \preceq^\textup{pre} \bfq
\qquad
\textup{and}
\qquad
\textup{(ii)}\;
n_\bfp + a \geq d_\bfq(\#\bfp) + b.
\end{align*}
If neither of \(\bfp, \bfq\) is a prefix of the other, then write \(\la\bfp,a\ra \nleftrightarrow^\textup{IP} \la\bfq,b\ra\). In Lemma~\ref{isPIP} we will establish that this defines a PIP structure on \(\IPGbl\).

\begin{Example}
In Figure~\ref{fig:indpathfull}, the Hasse diagram of the PIP of indexed paths \(\textup{IP}_{C_3, b, 5}\) is depicted, where \(C_3\) is the cycle graph on 3 vertices. Incosistent pairs are indicated by connecting each \(\preceq\)-minimal inconsistent pair with a dotted line.
\end{Example}

\begin{landscape}
\begin{figure}
{}
\vspace{2cm}
{}
\includegraphics[width=22.5cm]{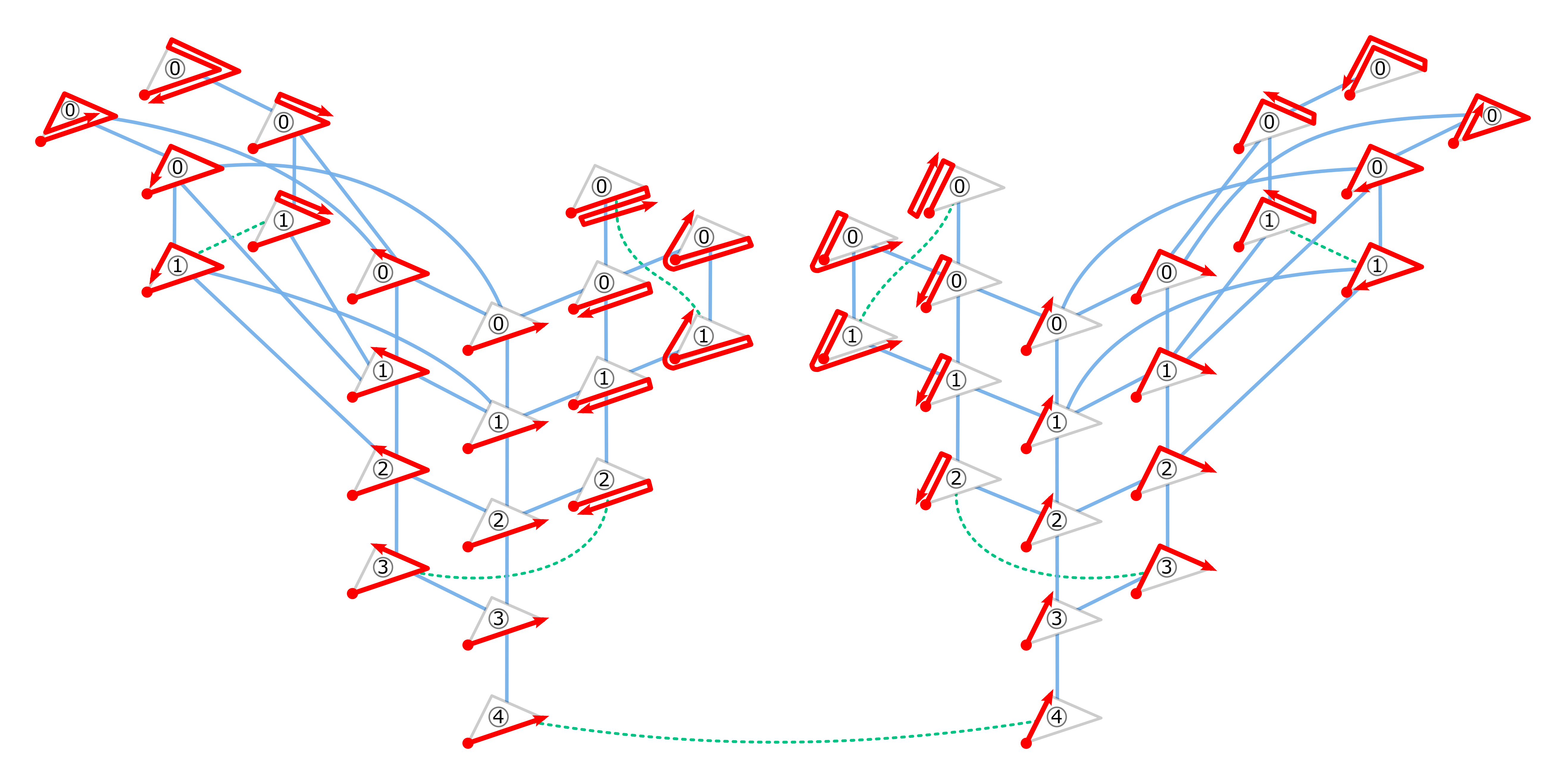}
\caption{Hasse diagram for the PIP of indexed paths \(\textup{IP}_{C_3, b, 5}\).}
\label{fig:indpathfull}       % Give a unique label
\end{figure}
\end{landscape}

\begin{Lemma}\label{isPIP}
The relations \(\preceq^\textup{IP}\), \(\nleftrightarrow^\textup{IP}\) define a PIP structure on \(\IPGbl\).
\end{Lemma}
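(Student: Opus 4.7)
The plan is to check, in turn, the three pieces of a PIP structure: that $\preceq^\textup{IP}$ is a partial order, that $\nleftrightarrow^\textup{IP}$ is symmetric, and that the compatibility $u \nleftrightarrow^\textup{IP} v \preceq^\textup{IP} w \implies u \nleftrightarrow^\textup{IP} w$ holds. Symmetry of $\nleftrightarrow^\textup{IP}$ is immediate from its definition. For compatibility, given $\langle\bfp,a\rangle \nleftrightarrow^\textup{IP} \langle\bfq,b\rangle \preceq^\textup{IP} \langle\bfr,c\rangle$, I argue by contradiction: if $\bfr \preceq^\textup{pre} \bfp$, chaining with $\bfq \preceq^\textup{pre} \bfr$ yields $\bfq \preceq^\textup{pre} \bfp$, violating $\langle\bfp,a\rangle \nleftrightarrow^\textup{IP} \langle\bfq,b\rangle$; and if $\bfp \preceq^\textup{pre} \bfr$, then $\bfp$ and $\bfq$ sit among the prefixes of $\bfr$, which form a chain under $\preceq^\textup{pre}$, again a contradiction. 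Reflexivity of $\preceq^\textup{IP}$ reduces to $d_\bfp(\#\bfp) = n_\bfp$ (the last edge of $\bfp$ lies in $\bfp^{(n_\bfp)}$), and antisymmetry is immediate: $\bfp\preceq^\textup{pre}\bfq$ and $\bfq\preceq^\textup{pre}\bfp$ force $\bfp=\bfq$, after which the two numerical inequalities collapse to $a=b$.

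The main obstacle is transitivity. Given $\langle\bfp,a\rangle \preceq^\textup{IP} \langle\bfq,b\rangle \preceq^\textup{IP} \langle\bfr,c\rangle$, adding the defining inequalities eliminates $b$ and (using $n_\bfq=d_\bfq(\#\bfq)$) shows that transitivity reduces to the key inequality
\[
d_\bfr(\#\bfq) - d_\bfr(i) \geq d_\bfq(\#\bfq) - d_\bfq(i) \qquad \text{for } \bfq\preceq^\textup{pre}\bfr \text{ and } i\in[1,\#\bfq],
\]
applied at $i=\#\bfp$. Informally this says the greedy cycle-free decomposition of $\bfr$ exhibits at least as many splits as that of $\bfq$ in any prefix range $[i,\#\bfq-1]$. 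The subtlety is that an individual split of $\bfq$ need not remain a split of $\bfr$: one can construct small examples where extending $\bfq$ causes $\bfr$'s last cycle-free block to absorb an initial portion of $\bfq^{(n_\bfq)}$, redistributing the splits strictly to the right. So any purely local position-by-position comparison of splits will fail, and a more global argument is required.

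I plan to prove the key inequality by induction on $n_\bfq$. The base $n_\bfq=1$ is trivial since the right side is $0$ while $d_\bfr$ is non-decreasing. For $n_\bfq\geq 2$, let $j := \#\bfq - \#\bfq^{(n_\bfq)}\geq 1$ and let $\bfr^{(K)}$ (with $K=d_\bfr(\#\bfq)$) be the block of $\bfr$ containing edge $\#\bfq$, spanning edge positions $[\alpha,\beta]$. The pivotal sub-claim is $\alpha\geq j+1$: since $\bfr^{(K)}$ is cycle-free, the vertices of $\bfr$ at positions $\alpha-1,\alpha,\ldots,\beta$ are distinct, so if $\alpha\leq j$ the vertices at positions $j-1,j,\ldots,\#\bfq$ would also be distinct; but these agree with the corresponding vertices of $\bfq$ (since $\bfq\preceq^\textup{pre}\bfr$), contradicting the maximality of $\bfq^{(n_\bfq)}$ as the cycle-free suffix of $\bfq$. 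The case $i>j$ is immediate, since $d_\bfq(i)=n_\bfq$ forces both sides to vanish. For $i\leq j$, I apply the inductive hypothesis to $\bfq':=\bfq^{(1)}\cdots\bfq^{(n_\bfq-1)}$ and $\bfr':=\bfr^{(1)}\cdots\bfr^{(K-1)}$, which are prefixes of $\bfr$ of lengths $j$ and $\alpha-1$ with $\bfq'\preceq^\textup{pre}\bfr'$ and $n_{\bfq'}=n_\bfq-1$; the recursive greedy-from-right definition identifies $d_{\bfq'}$ with $d_\bfq$ on $[1,j]$ and $d_{\bfr'}$ with $d_\bfr$ on $[1,\alpha-1]$, so the hypothesis yields $d_\bfr(j)-d_\bfr(i) \geq (n_\bfq-1)-d_\bfq(i)$. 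Combining with $d_\bfr(\#\bfq)-d_\bfr(j)\geq K-(K-1)=1$ (valid because $j\leq \alpha-1$ implies $d_\bfr(j)\leq d_\bfr(\alpha-1)=K-1$) delivers the required bound $d_\bfr(\#\bfq)-d_\bfr(i)\geq n_\bfq-d_\bfq(i)$.
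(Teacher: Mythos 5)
Your handling of reflexivity, antisymmetry, symmetry of $\nleftrightarrow^\textup{IP}$, and the compatibility axiom matches the paper's. The substantive divergence is in transitivity, and here you have actually done something the paper does not. You correctly reduce transitivity (after eliminating $b$) to the key inequality
\[
d_\bfr(\#\bfq) - d_\bfr(\#\bfp) \;\geq\; n_\bfq - d_\bfq(\#\bfp),
\]
and then prove the stronger statement with $\#\bfp$ replaced by any $i\in[1,\#\bfq]$ by induction on $n_\bfq$, using the pivotal sub-claim that the block $\bfr^{(K)}$ of $\bfr$ containing edge $\#\bfq$ cannot reach back into the portion of $\bfr$ covered by $\bfq^{(1)}\cdots\bfq^{(n_\bfq-1)}$ (else the maximality of the cycle-free suffix $\bfq^{(n_\bfq)}$ would be violated). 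I checked this induction carefully and it is correct: $\bfq'=\bfq^{(1)}\cdots\bfq^{(n_\bfq-1)}$ is a prefix of $\bfr'=\bfr^{(1)}\cdots\bfr^{(K-1)}$ because $j\leq\alpha-1$; the recursive (from the right) definition of the decomposition makes $d_{\bfq'}=d_\bfq$ on $[1,j]$ and $d_{\bfr'}=d_\bfr$ on $[1,\alpha-1]$; and the extra increment $d_\bfr(\#\bfq)-d_\bfr(j)\geq 1$ follows from $d_\bfr(j)\leq d_\bfr(\alpha-1)=K-1$.

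By contrast, the paper's proof of transitivity asserts the chain
\[
n_\bfp + a \geq d_\bfq(\#\bfp) + b \geq d_\bfq(\#\bfp) + d_\bfr(\#\bfq) + c - n_\bfq \geq d_\bfr(\#\bfq) + c \geq d_\bfr(\#\bfp) + c,
\]
justifying the third step by ``$n_\bfq \geq d_\bfq(\#\bfp)$.'' But that third inequality, $d_\bfq(\#\bfp) + d_\bfr(\#\bfq) + c - n_\bfq \geq d_\bfr(\#\bfq) + c$, is equivalent to $d_\bfq(\#\bfp) \geq n_\bfq$, which is the \emph{opposite} direction (and is false whenever $d_\bfq(\#\bfp) < n_\bfq$, e.g.\ $\bfp = bc$, $\bfq = bcdb$ in a triangle). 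What the chain actually needs, once the intermediate term $d_\bfr(\#\bfq)+c$ is dropped, is exactly your key inequality; the paper supplies no argument for it. So your proof is not merely an alternative route --- it is the correct and complete argument, whereas the paper's published chain contains a gap at precisely the step you identified as ``the main obstacle.'' If anything, you should be aware that your proposal is doing more work than the paper appears to, and that extra work is genuinely necessary.
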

\begin{proof}

Reflexivity is obvious, since \(n_\bfp =  d_\bfp(\#\bfp)\) by definition. For antisymmetry, assume  \(\la\bfp, a\ra \preceq^\textup{IP} \la\bfq, b\ra\) and \(\la\bfq, b\ra\preceq^\textup{IP} \la\bfp, a\ra\). Then \(\bfp, \bfq\) are mutual prefixes, so we must have \(\bfp = \bfq\). Then we have \(d_\bfp(\#\bfq) = d_\bfq(\#\bfp) = d_\bfp(\#\bfp) = n_\bfp\), so condition (ii) forces \(a \geq b\) and \(b \geq a\), so \(a=b\), and thus \(\la\bfp,a\ra = \la\bfq, b\ra\), as desired.

To prove transitivity, assume \(\la\bfp, a\ra \preceq^\textup{IP} \la\bfq, b\ra \preceq^\textup{IP} \la\bfr, c\ra\). Then we have \(\bfp \preceq^\textup{pre} \bfq \preceq^\textup{pre} \bfr\), and
\begin{align*}
n_\bfp + a \geq d_\bfq(\#\bfp) + b \geq d_\bfq(\#\bfp) + d_{\bfr}(\#\bfq) + c - n_\bfq \geq d_{\bfr}(\#\bfq) + c \geq d_\bfr(\#\bfp) + c,
\end{align*}
where the first inequality comes from the fact that \(\la\bfp,a\ra \preceq^\textup{IP} \la\bfq,b\ra\), the second inequality comes from the fact that \(\la\bfq,b\ra \preceq^\textup{IP} \la\bfr, c\ra\), the third inequality comes from the fact that \(\#\bfp \leq \#\bfq\) and so \(n_\bfq \geq d_\bfq(\#\bfp)\) by definition, and the fourth inequality comes from the fact that the function \(d_\bfr\) is weakly increasing. Thus \(\la\bfp, a\ra \preceq^\textup{IP} \la\bfr, c\ra\).

To prove the inconsistency axiom, assume \(\la\bfp, a\ra \nleftrightarrow^\textup{IP} \la\bfq, b\ra \preceq^\textup{IP} \la\bfr, c\ra\). Then \(\bfq \preceq^{\textup{pre}}\bfr\). If \(\bfp \preceq^{\textup{pre}}\bfr\), then it would follow that either \(\bfp \preceq^{\textup{pre}}\bfq\) or \(\bfq \preceq^{\textup{pre}}\bfp\), a contradiction. If, on the other hand, \(\bfr \preceq^{\textup{pre}} \bfp\), then we would have that \(\bfq \preceq^{\textup{pre}} \bfr  \preceq^{\textup{pre}} \bfp\), another contradiction. Thus \(\la\bfp, a\ra \nleftrightarrow^\textup{IP} \la\bfr, c\ra\) as desired.
\end{proof}

\subsection{\((G,b,\ell)\)-tableaux} 
\begin{Definition}\label{deftab}
A {\em \((G,b,\ell)\)-tableau} \((\bfp,L)\) is the data of a \((G,b)\)-path \(\bfp\) and a `labeling' function \(L:[1, \#\bfp] \to \Z_{\geq 0}\) such that:
\begin{enumerate}
\item Labels are weakly increasing: \(L(i) \leq L(j)\) for \(i \leq j\).
\item If \(i<j\) and \(p_{i-1} = p_j\), then \(L(i) <L(j)\).
\item \(L(\#\bfp) + \#\bfp \leq \ell\).
\end{enumerate}
\end{Definition}
We note that (ii) is equivalent to asserting that the \(G\)-path \((\bfp_t)_{t \in L^{-1}(m)}\) is cycle-free for \(m \in \Z_{\geq 0}\).
We also note that \((\bfempty^b, \varnothing \to \Z_{\geq 0})\), the go-nowhere path at \(b\) with trivial labeling function  is a \((G,b,\ell)\)-tableau. We write \(\textup{Tab}_{G,b,\ell}\) for the set of all \((G,b,\ell)\)-tableaux. We will visually depict \((G,b,\ell)\)-tableaux as labeled paths wherein the \(i\)th edge is labeled by \(L(i)\).

\subsection{Tight \((G,b,\ell)\)-tableaux}
For \(\la \bfp, a \ra \in \IPGbl\), define a labeling function \(L_{\la \bfp, a\ra} : [1, \#\bfp] \to \Z_{\geq 0}\) by setting
\(
L_{\la \bfp, a\ra}(r) = d_\bfp(r) + a - 1.
\)

\begin{Lemma}
The assignment \(\la \bfp, a \ra \mapsto (\bfp, L_{\la \bfp, a\ra})\) gives a well-defined function \(\tau:\IPGbl \to \Tab\).
\end{Lemma}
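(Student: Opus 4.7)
The plan is to verify that for each indexed path $\la \bfp, a \ra \in \IPGbl$, the pair $(\bfp, L_{\la \bfp, a\ra})$ satisfies all three axioms of Definition~\ref{deftab}, along with the implicit requirement that the labeling function lands in $\Z_{\geq 0}$. The nonnegativity check is immediate: by the defining interval, $d_\bfp(r) \in [1, n_\bfp]$, and since $a \geq 0$ from the specification (\ref{indpathspec}), we have $L_{\la \bfp, a\ra}(r) = d_\bfp(r) + a - 1 \geq 0$.

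For condition (i), I would observe that $d_\bfp$ is weakly increasing directly from its defining specification: as $r$ sweeps through the contiguous block $[\#\bfp^{(1)} + \cdots + \#\bfp^{(t-1)} + 1,\ \#\bfp^{(1)} + \cdots + \#\bfp^{(t)}]$ it takes the constant value $t$, and this value increases as $r$ advances to later blocks. Since $L_{\la \bfp, a\ra}$ differs from $d_\bfp$ by the additive constant $a-1$, it too is weakly increasing. For condition (iii), I would unfold: the last index $r = \#\bfp$ lies in the final block, so $d_\bfp(\#\bfp) = n_\bfp$, giving
\[
L_{\la \bfp, a\ra}(\#\bfp) + \#\bfp \;=\; n_\bfp + a - 1 + \#\bfp.
\]
The bound $a \leq \ell + 1 - \#\bfp - n_\bfp$ from the definition of $\IPGbl$ translates exactly into this quantity being at most $\ell$.

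I expect condition (ii) to be the main obstacle, and I would handle it by contradiction. Suppose $i<j$ with $p_{i-1} = p_j$ but $L_{\la \bfp, a\ra}(i) \geq L_{\la \bfp, a\ra}(j)$; combined with the weak monotonicity already established, this forces $d_\bfp(i) = d_\bfp(j)$. Call this common value $t$. By weak monotonicity of $d_\bfp$, every intermediate index $r \in [i,j]$ satisfies $d_\bfp(r) = t$ as well, so the edges $\bfp_i, \ldots, \bfp_j$ all lie inside the cycle-free component $\bfp^{(t)}$. But then $p_{i-1}$ (the tail of $\bfp_i$) and $p_j$ (the head of $\bfp_j$) are both vertices appearing in the vertex sequence of $\bfp^{(t)}$. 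Since $\bfp^{(t)}$ is cycle-free, that vertex sequence contains no repeats, contradicting $p_{i-1} = p_j$. Hence $L_{\la \bfp, a\ra}(i) < L_{\la \bfp, a\ra}(j)$, completing the verification that $(\bfp, L_{\la \bfp, a\ra}) \in \Tab$ and thus that $\tau$ is a well-defined function.
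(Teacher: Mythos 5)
Your proof is correct and follows essentially the same approach as the paper's: verify axioms (i)--(iii) of Definition~\ref{deftab} one by one, using that $d_\bfp$ is weakly increasing for (i), the cycle-free property of the pieces $\bfp^{(t)}$ for (ii), and the bound $a \leq \ell+1-\#\bfp-n_\bfp$ from (\ref{indpathspec}) for (iii). The paper simply asserts for (ii) that $\bfp_i$ and $\bfp_j$ ``cannot belong to the same part'' of the decomposition, whereas you supply the (correct) short argument by contradiction showing why; you also add the easy nonnegativity check that the paper leaves implicit.
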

\begin{proof}
We check that \((\bfp, L_{\la \bfp, a\ra})\) satisfies axioms (i)--(iii) of Definition~\ref{deftab}. Part (i) follows from the fact that \(d_\bfp\) is weakly increasing by definition. For (ii) we note that if \(i<j\), \(p_{i-1} = p_j\), then \(\bfp_i, \bfp_j\) cannot belong to the same part of the the maximal-length cycle-free suffix decomposition \(\bfp^{(1)}, \ldots, \bfp^{(n_\bfp)}\) of \(\bfp\). Thus \(d_\bfp(i) < d_\bfp(j)\), so \(L_{\la \bfp, a\ra}(i) < L_{\la \bfp, a\ra}(j)\). For (iii), we have
\begin{align*}
L_{\la \bfp, a\ra}(\#\bfp) + \#\bfp = d_\bfp(\#\bfp) + a -1 + \#\bfp = n_\bfp + a -1 + \#\bfp \leq \ell,
\end{align*}
where the last inequality follows from (\ref{indpathspec})
and the fact that \(\la \bfp, a \ra \in \IPGbl\).
\end{proof}

We refer to members of \(\tau(\IPGbl) \subseteq \Tab\) as {\em tight \((G,b,\ell)\)-tableaux}. See Figure~\ref{fig:tighttabex} for a visual depiction of tight and non-tight tableaux.

\begin{figure}[h]
\begin{align*}
\hackcenter{
\begin{tikzpicture}[scale=0.6]
\draw[black, fill =black]  (0,0) circle (3pt);
\draw[black, fill =black]  (3,1) circle (3pt);
\draw[black, fill =black]  (-1,2) circle (3pt);
\draw[black, fill =black]  (5,1) circle (3pt);
\draw[ thick, join=round, cap=round] (0,0)--(3,1)--(-1,2)--(0,0);
\draw[ thick, join=round, cap=round] (3,1)--(5,1);
  \node[above] at (0.5,0.5){ $\raisebox{.5pt}{\textcircled{\raisebox{-.9pt} {2}}}$};     
%%%
%%%
 \draw[ ultra thick, join=round, cap=round, red] (0,0) .. controls ++(.25,0.5) and ++(-.45,0.15) ..(3,1);
  \draw[ ultra thick, join=round, cap=round, red] (3,1) .. controls ++(.45,-0.45) and ++(-.45,-0.45) .. (5,1);
    \draw[ ultra thick, join=round, cap=round, red] (5,1) .. controls ++(-.45,0.45) and ++(+.45,0.45) .. (3,1.2);
       \draw[ ultra thick, join=round, cap=round, red] (3,1.2) .. controls ++(-.45,0.45) and ++(+.45,0.25) .. (-1,2);
        \draw[ ultra thick, join=round, cap=round, red] (-1,2) .. controls ++(-.45,-0.45) and ++(-.45,0.25) .. (-0.2,-0.2);
          \draw[ ultra thick, join=round, cap=round, red, ->] (-0.2,-0.2) .. controls ++(.45,-0.45) and ++(-.55,-0.5) .. (3,0.7);
\end{tikzpicture}
}
\;\;
\xrightarrow{\tau}
\;\;
\hackcenter{
\begin{tikzpicture}[scale=0.6]
\draw[black, fill =black]  (0,0) circle (3pt);
\draw[black, fill =black]  (3,1) circle (3pt);
\draw[black, fill =black]  (-1,2) circle (3pt);
\draw[black, fill =black]  (5,1) circle (3pt);
\draw[ thick, join=round, cap=round] (0,0)--(3,1)--(-1,2)--(0,0);
\draw[ thick, join=round, cap=round] (3,1)--(5,1);
  \node[above] at (0.5,0.5){ $\scriptstyle 2$};     
    \node[above] at (4,-0.1){ $\scriptstyle 2$};    
     \node[above] at (4,1.4){ $\scriptstyle 3$};   
     \node[above] at (1,1.8){ $\scriptstyle 3$};   
       \node[above] at (-1.4,0.6){ $\scriptstyle 4$};  
             \node[above] at (1,-1){ $\scriptstyle 4$};  
%%%
%%%
 \draw[ ultra thick, join=round, cap=round, red] (0,0) .. controls ++(.25,0.5) and ++(-.45,0.15) ..(3,1);
  \draw[ ultra thick, join=round, cap=round, red] (3,1) .. controls ++(.45,-0.45) and ++(-.45,-0.45) .. (5,1);
    \draw[ ultra thick, join=round, cap=round, red] (5,1) .. controls ++(-.45,0.45) and ++(+.45,0.45) .. (3,1.2);
       \draw[ ultra thick, join=round, cap=round, red] (3,1.2) .. controls ++(-.45,0.45) and ++(+.45,0.25) .. (-1,2);
        \draw[ ultra thick, join=round, cap=round, red] (-1,2) .. controls ++(-.45,-0.45) and ++(-.45,0.25) .. (-0.2,-0.2);
          \draw[ ultra thick, join=round, cap=round, red, ->] (-0.2,-0.2) .. controls ++(.45,-0.45) and ++(-.55,-0.5) .. (3,0.7);
\end{tikzpicture}
}
\qquad
\;\;\;
\hackcenter{
\begin{tikzpicture}[scale=0.6]
\draw[black, fill =black]  (0,0) circle (3pt);
\draw[black, fill =black]  (3,1) circle (3pt);
\draw[black, fill =black]  (-1,2) circle (3pt);
\draw[black, fill =black]  (5,1) circle (3pt);
\draw[ thick, join=round, cap=round] (0,0)--(3,1)--(-1,2)--(0,0);
\draw[ thick, join=round, cap=round] (3,1)--(5,1);
  \node[above] at (0.5,0.5){ $\scriptstyle 2$};     
    \node[above] at (4,-0.1){ $\scriptstyle 2$};    
     \node[above] at (4,1.4){ $\scriptstyle 3$};   
     \node[above] at (1,1.8){ $\scriptstyle 3$};   
       \node[above] at (-1.4,0.6){ $\scriptstyle 3$};  
             \node[above] at (1,-1){ $\scriptstyle 4$};  
%%%
%%%
 \draw[ ultra thick, join=round, cap=round, blue] (0,0) .. controls ++(.25,0.5) and ++(-.45,0.15) ..(3,1);
  \draw[ ultra thick, join=round, cap=round, blue] (3,1) .. controls ++(.45,-0.45) and ++(-.45,-0.45) .. (5,1);
    \draw[ ultra thick, join=round, cap=round, blue] (5,1) .. controls ++(-.45,0.45) and ++(+.45,0.45) .. (3,1.2);
       \draw[ ultra thick, join=round, cap=round, blue] (3,1.2) .. controls ++(-.45,0.45) and ++(+.45,0.25) .. (-1,2);
        \draw[ ultra thick, join=round, cap=round, blue] (-1,2) .. controls ++(-.45,-0.45) and ++(-.45,0.25) .. (-0.2,-0.2);
          \draw[ ultra thick, join=round, cap=round, blue, ->] (-0.2,-0.2) .. controls ++(.45,-0.45) and ++(-.55,-0.5) .. (3,0.7);
\end{tikzpicture}
}
\end{align*}
\caption{An indexed path \(\langle \bfp, 2\rangle\) and associated tight tableau \(\tau(\langle \bfp, 2 \rangle)\); a non-tight tableau on the same path \(\bfp\).}
\label{fig:tighttabex}
\end{figure}
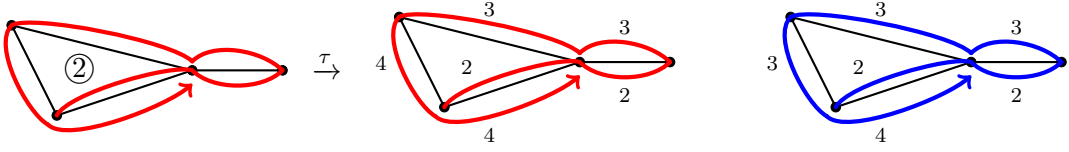

\begin{Lemma}\label{tightimage}
Let \((\bfp, L) \in \Tab\), and set \(a = L(\#\bfp) - n_\bfp + 1\). Then \((\bfp, L) = \tau(\la \bfp, a \ra)\) if and only if, for every \((\bfp, L') \in \Tab\) with \(L'(\#\bfp) = L(\#\bfp)\), we have \(L'(r) \leq L(r)\) for all \(r \in [1, \#\bfp]\).
\end{Lemma}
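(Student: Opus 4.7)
The plan is to reduce everything to a single combinatorial claim about minimum cycle-free decompositions, and then use a sandwich argument to get both directions at once. Along the way I would quickly verify that $a$ as defined lies in $[0, \ell+1-\#\bfp-n_\bfp]$: the upper bound is immediate from Definition~\ref{deftab}(iii), and the lower bound $L(\#\bfp) \geq n_\bfp - 1$ will follow from the Key Claim below applied with $r = 1$, combined with the elementary fact that any valid $L$ uses at most $L(\#\bfp) + 1$ distinct integer labels.

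The key simplification is that $L_{\la\bfp,a\ra}(r) = d_\bfp(r) + L(\#\bfp) - n_\bfp$ depends on $L$ only through the terminal value $L(\#\bfp)$. Consequently, if one proves the forward direction---that $L_{\la\bfp,a\ra}$ is pointwise maximal among all valid labelings of $\bfp$ sharing the same terminal value---then the reverse direction comes for free by sandwich: applying the forward direction to $L' = L$ yields $L \leq L_{\la\bfp,a\ra}$ pointwise, while the hypothesized label-maximality of $L$ applied to $L' = L_{\la\bfp,a\ra}$ gives the reverse inequality, forcing $L = L_{\la\bfp,a\ra}$.

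So everything reduces to the forward direction. For this, fix $r$, set $t = d_\bfp(r)$ and $m = L(\#\bfp)$, and observe that the constant-label blocks of $L'$ restricted to the suffix $\bfp|_{[r,\#\bfp]}$ decompose it into cycle-free consecutive pieces (by Definition~\ref{deftab}(ii)), with at most $m - L'(r) + 1$ pieces (the count of available integer values in $[L'(r), m]$). The forward direction then reduces entirely to the following \textbf{Key Claim}: any cycle-free consecutive decomposition of $\bfp|_{[r,\#\bfp]}$ uses at least $n_\bfp - t + 1$ pieces. Granting this, $m - L'(r) + 1 \geq n_\bfp - t + 1$, which rearranges to $L'(r) \leq d_\bfp(r) + m - n_\bfp = L_{\la\bfp,a\ra}(r)$, as required.

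The hard part will be the Key Claim, which asserts that the maximal-length cycle-free suffix decomposition is also minimum-cardinality. A short recursive computation first confirms that the maximal-length decomposition of $\bfp|_{[r,\#\bfp]}$ has exactly $n_\bfp - t + 1$ pieces (regardless of whether $r$ starts $\bfp^{(t)}$ or lies strictly inside it): greedy peeling from the right strips off $\bfp^{(n_\bfp)}, \bfp^{(n_\bfp-1)}, \ldots, \bfp^{(t+1)}$ in turn, leaving the portion of $\bfp^{(t)}$ beginning at position $r$ as a single final cycle-free piece. The minimality statement is then proved by induction on path length via a greedy exchange: in any minimum-cardinality cycle-free consecutive decomposition, the final piece is a cycle-free suffix of $\bfp$, so its length is at most $\#\bfp^{(n_\bfp)}$; since any subpath of a cycle-free path is itself cycle-free, one can incrementally absorb edges from the penultimate piece into the final one until the final piece coincides exactly with $\bfp^{(n_\bfp)}$, without ever increasing the total piece count (exhausting the penultimate piece during this process would strictly decrease the count, contradicting minimality). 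Stripping off the reshaped final piece leaves a strictly shorter prefix whose $n$-value is exactly $n_\bfp - 1$ by the recursive definition of the maximal-length decomposition, and the induction closes.
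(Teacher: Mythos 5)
Your proof is correct, and it reaches the same central inequality \(L'(r) \leq d_\bfp(r) + L(\#\bfp) - n_\bfp\) as the paper, but by a genuinely different route. The paper's proof is more direct: it fixes the right endpoints \(u_1 < \cdots < u_{n_\bfp}\) of the pieces in the maximal-length cycle-free suffix decomposition, then exhibits explicit witness positions \(v_i\) with \(u_i < v_i \leq u_{i+1}\) and \(p_{u_i-1} = p_{v_i}\); Definition~\ref{deftab}(ii) then forces \(L'(u_i) < L'(u_{i+1})\), and summing these strict increases gives the bound directly (the lower bound \(a\geq 0\) drops out of the same computation at \(j=1\)). You instead extract the underlying combinatorial content into a standalone Key Claim---that the maximal-length cycle-free suffix decomposition is also a minimum-cardinality cycle-free consecutive decomposition---and prove it by a greedy exchange plus induction on path length, then deduce the label bound by counting constant-label blocks. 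Your exchange argument is sound: the reshaping step keeps every piece cycle-free because the new last piece is always a suffix of \(\bfp^{(n_\bfp)}\) and the truncated penultimate piece is a consecutive subpath of a cycle-free path, and exhausting the penultimate piece contradicts minimality of the decomposition, so the induction closes cleanly with \(n\)-value dropping by exactly one. Your sandwich argument for the reverse direction is the same as the paper's. What your route buys is a clean structural lemma (minimum cardinality \(=n_\bfq\) for cycle-free consecutive decompositions of any \(\bfq\)) that could be of independent interest; what it costs is length---the paper's witness-vertex argument avoids the induction and the exchange bookkeeping entirely, accomplishing the same bound in a few lines. In fact the paper's witnesses \(v_i\) could be recycled to give a direct, non-inductive proof of your Key Claim (the piece containing both \(\bfp_{u_i}\) and \(\bfp_{v_i}\) would contain a repeated vertex, so \(u_i\) and \(u_{i+1}\) must lie in distinct pieces), which illustrates that the two arguments are dual presentations of one fact rather than logically independent.
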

\begin{proof}
Let \((\bfp, L') \in \Tab\). Define the sequence \(u_1, \ldots, u_{n_\bfp}\) by letting \(u_i\) be maximal such that \(d_{\bfp}(u_i) = i\). Then by definition of the maximal-length cycle-free suffix decomposition of \(\bfp\), for \(i \in [1, n_\bfp -1]\) there exists \(v_i\) such that \(u_i < v_i \leq u_{i+1}\) and \(p_{u_i-1} = p_{v_i}\). Then by Definition~\ref{deftab}(ii), we must have
\(
L'(u_1) < L'(v_1) \leq L'(u_{i+1})
\)
for all \(i \in [1,n_{\bfp} -1]\), so \(L'(u_j) \geq j-1\) and \(L'(\#\bfp) = L'(u_{n_\bfp}) \geq L'(u_j) + n_{\bfp} -j \) for all \(j\). 

Now let \((\bfp, L) \in \Tab\), and set \(a = L(\#\bfp) - n_\bfp + 1\). By the above paragraph, \(n_{\bfp} -1 \leq L(\#\bfp) \leq \ell - \#\bfp\), so \(a \in [0, \ell +1 - \#\bfp - n_{\bfp}]\). Thus, in consideration of (\ref{indpathspec}), there exists an element \(\la \bfp, a \ra \in \IPGbl\). Let \((\bfp, L_{\la \bfp, a\ra}) = \tau(\la \bfp, a \ra)\) be the associated tight \((G,b,\ell)\)-tableau. Note that \(L_{\la \bfp, a\ra}(\#\bfp) = n_\bfp + a -1 = L(\#\bfp)\).

\((\implies)\)
Assume by way of contradiction that \((\bfp, L') \in \Tab\) is such that \(L'(\#\bfp) = L_{\la \bfp, a\ra}(\#\bfp)\), and 
\(L'(r) > L_{\la \bfp, a\ra}(r)\) for some \(r \in [1, \#\bfp]\). We have \(d_{\bfp}(r) = d_{\bfp}(u_j)\) for some \(j\), and \(u_j \geq r\). Hence by Definition~\ref{deftab}(i), \(L'(u_j) \geq L'(r) > L_{\la \bfp, a\ra}(r) =L_{\la \bfp, a\ra}(u_j) = d_\bfp(u_j) + a - 1 = j+ a - 1\). Therefore \(L'(\#\bfp) > n_\bfp + a-1 = L_{\la \bfp, a\ra}(\#\bfp)\), giving the desired contradiction. 

\((\impliedby)\) By the above, we have \(L(r) \leq L_{\la \bfp, a\ra}(r)\) for all \(r \in [1,\#\bfp]\). On the other hand, if \(L(r) \geq L_{\la \bfp, a\ra}(r)\) for all \(r \in [1,\#\bfp]\), it then follows that \(L(r) = L_{\la \bfp, a\ra}(r)\) for all \(r \in [1,\#\bfp]\), and hence \((\bfp,L) = (\bfp, L_{\la \bfp, a\ra})\), completing the proof.
\end{proof}

\subsection{Extended tableaux}\label{exttab}

Let \(\bfq\) be a \((G,b)\)-path. We write \(\textup{Tab}_{G,b,\ell}^{\preceq\bfq}\) for the set of \((G,b,\ell)\)-tableaux \((\bfp, L)\) such that \(\bfp \preceq^\textup{pre} \bfq\). 
An {\em extended \(\bfq\)-tableau} is a weakly increasing labeling function \(U:[1, \#\bfq] \to \Z_{\geq 0} \cup \{\infty\}\) such that, if \(U(m) \in \Z_{\geq 0}\), then \(((\bfq_1, \ldots, \bfq_m), U|_{[1,m]})\) is a \((G,b,\ell)\)-tableau. Writing \(\Tabqinf\) for the set of extended \(\bfq\)-tableaux, there is a clear bijection \(g^\bfq: \Tablq \to \Tabqinf\) given by sending \((\bfp, L)\) to \(L^{\bfq, \infty}\), where
\begin{align*}
L^{\bfq, \infty}(i) = \begin{cases} L(i) & \textup{if }i \leq \#\bfp\\
\infty & \textup{otherwise}.
\end{cases}
\end{align*}
There is a partial order \(\preceq^\infty\) on \(\Tabqinf\) given by setting \(U \preceq^\infty U'\) provided that \(U(i) \geq U'(i)\) for all \(i \in [1, \#\bfq]\).

\begin{Lemma}
\(\Tabqinf\) forms a distributive lattice under \(\preceq^\infty\), where 
\begin{align*}
(U \wedge U')(i) = \max\{ U(i), U'(i)\}
\qquad
\textup{and}
\qquad
(U \vee U')(i) = \min\{ U(i), U'(i)\}.
\end{align*}
\end{Lemma}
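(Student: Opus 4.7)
The plan is to verify three things in order: (a) that the pointwise formulas $\max$ and $\min$ send pairs from $\Tabqinf$ back into $\Tabqinf$; (b) that they then realize the meet and join of the partial order $\preceq^\infty$; and (c) that the resulting lattice is distributive. Items (b) and (c) will be nearly automatic once (a) is in place, so the content of the proof is the closure check, and within that check the only delicate point is condition (ii) of Definition~\ref{deftab} for the join.

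For closure, the weakly-increasing axiom is preserved by both pointwise $\max$ and pointwise $\min$, and the bound $L(\#\bfp) + \#\bfp \leq \ell$ transfers: if $W = U \wedge U'$ has $W(m) \in \Z_{\geq 0}$, then $W(m) + m = \max\{U(m), U'(m)\} + m \leq \ell$ since both $U$ and $U'$ satisfy the bound at $m$; and similarly $V(m) + m \leq U(m) + m \leq \ell$ for $V = U \vee U'$, provided we choose the side that witnesses finiteness of $V(m)$. For the cycle-free condition (Definition~\ref{deftab}(ii)), the key observation is that the axiom is local: whenever the label at some index $m$ is finite, weak monotonicity forces all earlier labels to be finite as well, and the truncation is then a genuine $(G,b,\ell)$-tableau. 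For the meet $W$, if $W(m) \in \Z_{\geq 0}$ then \emph{both} $U(m)$ and $U'(m)$ are finite, so I can invoke the cycle-free condition for $U$ and for $U'$ separately and conclude that $W(i) = \max\{U(i), U'(i)\} < \max\{U(j), U'(j)\} = W(j)$ whenever $i<j$ and $q_{i-1} = q_j$. For the join $V$, only one side need be finite at $m$, but that is exactly what is needed: whichever of $U(j)$ or $U'(j)$ attains the minimum $V(j)$ is necessarily finite, and from the corresponding restriction being a $(G,b,\ell)$-tableau one deduces $V(i) \leq (\text{winning side})(i) < (\text{winning side})(j) = V(j)$.

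With closure established, the identification of $\min$ and $\max$ with join and meet is tautological from the reverse-pointwise definition of $\preceq^\infty$: any extended tableau $X$ satisfying $U \preceq^\infty X$ and $U' \preceq^\infty X$ has $X(i) \leq U(i)$ and $X(i) \leq U'(i)$ for all $i$, hence $X(i) \leq \min\{U(i), U'(i)\}$, which is exactly $V \preceq^\infty X$; the dual reasoning handles $W$. Finally, distributivity of the lattice reduces pointwise to the familiar identity $\max\{x, \min\{y,z\}\} = \min\{\max\{x,y\}, \max\{x,z\}\}$ on the linearly ordered set $\Z_{\geq 0} \cup \{\infty\}$. The main obstacle, and really the only nontrivial step, is the asymmetric case analysis in (a) for the join: one must be careful that the side achieving the pointwise minimum automatically imports the cycle-free condition at that index, even if the other side has value $\infty$ there.
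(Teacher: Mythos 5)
Your proof is correct and follows essentially the same approach as the paper's: verify that pointwise $\max$ and $\min$ land back in $\Tabqinf$, confirm they realize meet and join with respect to $\preceq^\infty$, and derive distributivity from the pointwise lattice identity on $\Z_{\geq 0} \cup \{\infty\}$. The paper works out the meet (pointwise $\max$) in detail and then declares the join case ``similar''; you correctly flesh out that omitted case, and in particular you spot the genuine asymmetry---that for the join one must pass to whichever of $U, U'$ attains the minimum at the relevant index to import the cycle-free condition---which the paper's ``similar'' glosses over.
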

\begin{proof}
For \(U,U' \in \Tabqinf\), define \(T: [1, \#\bfq] \to \Z_{\geq 0} \cup \{\infty\}\) by
\(
T(i)= \max\{ U(i), U'(i)\}
\). Once we verify that \(T \in \Tabqinf\), it is clear that \(T = U \wedge U'\). Assume that \(T(m) \in \Z_{\geq 0}\). We must verify that \(\alpha := (\bfq_1, \ldots, \bfq_m, T|_{[1,m]})\) satisfies properties (i)--(iii) of Definition~\ref{deftab}, and is hence a \((G, b, \ell)\)-tableau. As \(U(m), U'(m) \in \Z_{\geq 0}\), we have by assumption that \(\bfp := ((\bfq_1, \ldots, \bfq_m), U|_{[1,m]})\) and \(\bfp' := ((\bfq_1, \ldots, \bfq_m), U|_{[1,m]}')\) are \((G,b,\ell)\)-tableaux. Thus, for \(i \leq j\), we have \(U(i) \leq U(j)\) and \(U'(i) \leq U'(j)\), so it follows that \(T(i) = \max\{U(i), U'(i)\} \leq \max\{U'(i), U'(j)\} = T(j)\), and thus (i) is satisfied by \(\alpha\). For (ii), assume that \(i<j\), and \(q_{i-1} = q_j\). Then \(U(i) < U(j)\) and \(U'(i) < U'(j)\), which similarly implies that \(T(i)< T(j)\). Finally, we have that \(T(m) + m = U(m) + m \leq \ell\) or \(T(m) + m = U'(m) + m \leq\ell\), so (iii) is satisfied by \(\alpha\), giving the result.

The proof that \(U\vee U'\) is well-defined is similar. Distributivity is immediate as well, since:
\begin{align*}
(U \wedge (U' \vee U''))(i)  &= \max\{U(i), \min\{ U'(i), U''(i)\}\}\\
&= \min\{ \max\{U(i), U'(i)\} , \max\{U(i), U''(i)\}\}
= ((U\wedge U') \vee (U \wedge U''))(i),
\end{align*}
which completes the proof.
\end{proof}

\subsection{Tight tableaux and join-irreducibility}

\begin{Lemma}\label{JIq}
Let \((\bfp, L) \in \Tablq\). Then \((\bfp, L)\) is a tight \((G,b,\ell)\)-tableau if and only if \(h^\bfq((\bfp, L))\) is join-irreducible in \(\Tabqinf\).
\end{Lemma}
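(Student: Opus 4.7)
The plan is to invoke Lemma~\ref{tightimage}, which states that $(\bfp, L)$ is tight exactly when $L$ is pointwise maximal among labelings in $\Tab$ on the path $\bfp$ sharing the same final label $L(\#\bfp)$. Writing $U := g^\bfq((\bfp, L))$, the central feature to exploit is that $U$ is finite on $[1, \#\bfp]$ and equals $\infty$ on $(\#\bfp, \#\bfq]$, combined with the fact that joins in $\Tabqinf$ are computed as pointwise minima.

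For the forward direction, assume $(\bfp, L)$ is tight and $U = V \vee W$ in $\Tabqinf$. Since $U(\#\bfp) = L(\#\bfp) \in \Z_{\geq 0}$, the equation $\min(V(\#\bfp), W(\#\bfp)) = L(\#\bfp)$ forces one of the two values, say $V(\#\bfp)$, to equal $L(\#\bfp)$. Monotonicity of $V$ then pins down $V(i) \in \Z_{\geq 0}$ for $i \leq \#\bfp$, and for $i > \#\bfp$ the inequality $V(i) \geq U(i) = \infty$ forces $V(i) = \infty$. Thus $V = g^\bfq((\bfp, L_V))$ for some $(G,b,\ell)$-tableau satisfying $L_V \geq L$ pointwise and $L_V(\#\bfp) = L(\#\bfp)$. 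Applying Lemma~\ref{tightimage} to the tight $(\bfp, L)$ gives $L_V \leq L$ pointwise, so $L_V = L$ and $V = U$; the symmetric case yields $W = U$.

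For the reverse direction we prove the contrapositive. Assume $(\bfp, L)$ is not tight, with $\#\bfp \geq 1$. Set $a := L(\#\bfp) - n_{\bfp} + 1$; a quick check using Definition~\ref{deftab}(ii)--(iii) confirms $\langle \bfp, a\rangle \in \IPGbl$, so the tight tableau $\tau(\langle \bfp, a\rangle)$ is defined. Let $V := g^\bfq(\tau(\langle \bfp, a\rangle))$ and $W := g^\bfq((\bfp', L|_{[1, \#\bfp-1]}))$, where $\bfp'$ is the one-step truncation of $\bfp$. Lemma~\ref{tightimage} gives $V(i) \geq U(i)$ with equality at $i = \#\bfp$, and non-tightness of $(\bfp, L)$ yields $V \neq U$. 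One also checks $W(i) = L(i) = U(i)$ for $i < \#\bfp$ while $W(i) = \infty$ for $i \geq \#\bfp$, so $W \preceq^\infty U$ and $W \neq U$. A pointwise computation confirms $V \vee W = U$: for $i < \#\bfp$, $W(i) \leq V(i)$ makes $W(i) = L(i)$ win the min; at $i = \#\bfp$, the finite $V(\#\bfp) = L(\#\bfp)$ beats $W(\#\bfp) = \infty$; and for $i > \#\bfp$ both are $\infty$.

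The main subtlety lies in constructing $W$ in the reverse direction: the tight labeling associated to the final value $L(\#\bfp)$ may strictly dominate $L$ at every position of $[1, \#\bfp-1]$, so no suitable second factor exists among labelings on the path $\bfp$ itself. The extended-tableau formalism of \S\ref{exttab} is essential here: $W$ must live on the shorter prefix $\bfp'$, contributing $\infty$ at position $\#\bfp$ where it is absorbed by the finite value $V(\#\bfp)$ in the join. The degenerate case $\#\bfp = 0$ corresponds to the bottom element of $\Tabqinf$, typically excluded from the set of join-irreducibles under standard convention, matching the fact that the trivial tableau is not tight.
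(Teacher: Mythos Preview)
Your proof is correct and follows essentially the same approach as the paper's. In the reverse direction, the paper writes the decomposition as $\widetilde{L^{\bfq,\infty}} \vee (M^\infty \wedge L^{\bfq,\infty})$, where $M$ is the tight labeling with $M(\#\bfp)=L(\#\bfp)$ and $\widetilde{L^{\bfq,\infty}}$ is your $W$; since Lemma~\ref{tightimage} already gives $M(i)\geq L(i)$ for all $i$, the meet with $L^{\bfq,\infty}$ is redundant and the paper's second factor coincides with your $V$, so the two arguments are identical in content.
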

\begin{proof}
We will use the characterizing property of tight tableaux from Lemma~\ref{tightimage}: 
that \((\bfp, L)\in \Tab\) is tight if and only if, for every \((\bfp, L') \in \Tab\) with \(L'(\#\bfp) = L(\#\bfp)\), we have \(L'(r) \leq L(r)\) for all \(r \in [1, \#\bfp]\).

\((\implies)\) Assume \((\bfp, L)\) is tight, \(L^{\bfq, \infty} = h^\bfq((\bfp, L))\), and \(L^{\bfq, \infty} = L' \vee L''\) for some \(L', L'' \in \Tabqinf\). As \(L^{\bfq, \infty}(t) = \infty\) for \(t > \#\bfp\), we have that \(L'(t) = L''(t) = \infty\) for \(t > \#\bfp\). Since \(L^{\bfq, \infty}(\#\bfp) = \min \{L'(\bfp), L''(\bfp)\}\), we may assume without loss of generality that \(L^{\bfq, \infty}(\#\bfp) = L'(\bfp) \in \Z_{\geq 0}\). Assume by way of contradiction that \(L^{\bfq, \infty} \neq L'\). Then there must be some \(t < \#\bfp\) such that \(L^{\bfq, \infty}(t) = L''(t) < L'(t)\). But then \(((\bfq_1, \ldots, \bfq_{\#\bfp}), L'|_{[1, \#\bfp]}) = (\bfp,L'|_{[1, \#\bfp]}) \) is a \((G,b,\ell)\)-tableau with \(L'|_{[1, \#\bfp]}(\#\bfp) = L(\#\bfp)\), but  \(L'|_{[1, \#\bfp]}(t) > L(t)\), a contradiction of the tightness characterization of \((\bfp, L)\). 

\((\impliedby)\) We go by contrapositive. Assume that \((\bfp, L)\) is {\em not} tight. Then there exists a tight \((\bfp, M) \in \Tab\) such that \(M(\#\bfp) = L(\#\bfp)\), with some \(t < \#\bfp\) such that \(M(t) > L(t)\). Defining \(\widetilde{L^{\bfq, \infty}} \in \Tabqinf\) by \(\widetilde{L^{\bfq, \infty}}(\#\bfp) = \infty\) and \(\widetilde{L^{\bfq, \infty}}(k) = L^{\bfq, \infty}(k)\) otherwise, 
it is straightforward to check that 
\begin{align*}
L^{\bfq, \infty} = \widetilde{L^{\bfq, \infty}} \vee (M^\infty \wedge L^{\bfq, \infty}).
\end{align*}
Yet \(L^{\bfq, \infty} \neq \widetilde{L^{\bfq, \infty}}, (M^\infty \wedge L^{\bfq, \infty})\), so \(L^{\bfq, \infty}\) is not join-irreducible, completing the proof.
\end{proof}

For a poset \(\lambda\), we write \(J(\lambda) \subseteq \lambda\) for the poset of join-irreducible elements of \(\lambda\), with induced partial order.
For a \((G,b)\)-path \(\bfq\), write 
\begin{align*}
\IPGbl^{\preceq \bfq} := \{ \la \bfp, a \ra \in \IPGbl \mid \bfp \preceq^\textup{pre} \bfq\}.
\end{align*}

\begin{Lemma}\label{posetbij}
The map \(h^\bfq \circ \tau : \IPGbl^{\preceq \bfq} \to J(\Tabqinf)\) is an isomorphism of posets.
\end{Lemma}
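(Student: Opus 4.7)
The plan is to establish bijectivity and then verify that the bijection respects the partial orders in both directions. For bijectivity, Lemma~\ref{tightimage} identifies the image of $\tau$ as the tight $(G,b,\ell)$-tableaux, so its restriction to $\IPGbl^{\preceq\bfq}$ is a bijection onto the tight tableaux in $\Tablq$. By Lemma~\ref{JIq}, $h^\bfq$ sends these bijectively onto $J(\Tabqinf)$, so $h^\bfq \circ \tau$ is a bijection of sets.

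Unwinding definitions, $h^\bfq(\tau(\la\bfp,a\ra))(i) = d_\bfp(i) + a - 1$ for $i \leq \#\bfp$ and $\infty$ otherwise, so $h^\bfq(\tau(\la\bfp,a\ra)) \preceq^\infty h^\bfq(\tau(\la\bfr,c\ra))$ unpacks to $L^{\bfq,\infty}_{\la\bfp,a\ra}(i) \geq L^{\bfq,\infty}_{\la\bfr,c\ra}(i)$ for every $i \in [1, \#\bfq]$. The $(\Leftarrow)$ direction is straightforward: since $\bfp$ and $\bfr$ are both prefixes of $\bfq$, one must be a prefix of the other, and if $\bfr \prec^\textup{pre} \bfp$ strictly then at $i = \#\bfr + 1$ the left side is finite while the right is $\infty$; so $\bfp \preceq^\textup{pre} \bfr$, and evaluating the inequality at $i = \#\bfp$ recovers condition (ii) of $\preceq^\textup{IP}$.

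The $(\Rightarrow)$ direction is substantive: assuming $\la\bfp,a\ra \preceq^\textup{IP} \la\bfr,c\ra$, we need $d_\bfp(i) + a \geq d_\bfr(i) + c$ for every $i \leq \#\bfp$, whereas hypothesis (ii) supplies this only at $i = \#\bfp$. The task reduces to the combinatorial inequality $d_\bfr(\#\bfp) - d_\bfr(i) \geq n_\bfp - d_\bfp(i)$, which I would establish via two auxiliary facts: (A) for any $G^+$-path, $n$ equals the minimum number of parts over all contiguous cycle-free decompositions; and (B) writing $\bfp|_{[i, \#\bfp]}$ for the subpath on edges $i$ through $\#\bfp$, we have $n_{\bfp|_{[i,\#\bfp]}} = n_\bfp - d_\bfp(i) + 1$, provable by induction on $n_\bfp$ using that every suffix of $\bfp|_{[i,\#\bfp]}$ is itself a suffix of $\bfp$ (so its maximum cycle-free suffix is $\bfp^{(n_\bfp)}$ whenever $d_\bfp(i) < n_\bfp$). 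Given (A) and (B), the condition $\bfp \preceq^\textup{pre} \bfr$ implies $\bfp|_{[i,\#\bfp]} = \bfr|_{[i,\#\bfp]}$, so restricting $\bfr$'s decomposition to positions $[i, \#\bfp]$ yields a cycle-free decomposition of this common subpath with exactly $d_\bfr(\#\bfp) - d_\bfr(i) + 1$ parts, which by (A) must be at least $n_{\bfp|_{[i,\#\bfp]}} = n_\bfp - d_\bfp(i) + 1$, as required.

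The main obstacle is sub-lemma (A). I expect to prove it by an exchange argument: given an arbitrary cycle-free decomposition of a path, one extends its final part backward to coincide with the maximum cycle-free suffix of the whole path (correspondingly shortening the preceding part, which remains cycle-free as a subpath of itself). In a minimum decomposition this replacement cannot cause the preceding part to vanish (else merging the last two parts would contradict minimality), so the part count is preserved, and recursion on the shorter prefix concludes.
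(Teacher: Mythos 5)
Your bijectivity argument and your verification that $(h^\bfq\circ\tau)^{-1}$ is order-preserving both mirror the paper. For the forward direction, however, you take a genuinely different route, and in doing so you supply a step that the published argument actually gets wrong. The paper's proof writes, for $y\in[1,\#\bfp]$,
\[
d_\bfp(y) + a - 1 \;\geq\; d_\bfp(y) + d_\bfr(\#\bfp) + b - n_\bfp -1 \;\geq\; d_\bfr(\#\bfp) + b -1 \;\geq\; d_\bfr(y) + b -1,
\]
and attributes the middle inequality to the fact $n_\bfp\geq d_\bfp(y)$. But that middle step simplifies to $d_\bfp(y)\geq n_\bfp$, the \emph{reverse} of the cited bound, and it fails for every $y$ with $d_\bfp(y)<n_\bfp$; the two facts the paper invokes ($n_\bfp\geq d_\bfp(y)$ and $d_\bfr(\#\bfp)\geq d_\bfr(y)$) are each true but do not combine to give the target $d_\bfp(y)+a\geq d_\bfr(y)+b$. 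You correctly isolate the actual content required, namely $d_\bfr(\#\bfp)-d_\bfr(i)\geq n_\bfp-d_\bfp(i)$, and reduce it to your sub-lemmas. Both are sound: (B) is exactly the observation that the maximum-length cycle-free suffix of $\bfp|_{[i,\#\bfp]}$ is $\bfp^{(n_\bfp)}$ itself as long as $d_\bfp(i)<n_\bfp$, and then one inducts on $n_\bfp$; (A) (the greedy suffix decomposition has the fewest parts) follows by induction on $\#\bfp$ since the last part of any cycle-free decomposition is a suffix of $\bfp^{(n_\bfp)}$, so the remaining parts restrict to a cycle-free decomposition of $\bfp^{(1)}\cdots\bfp^{(n_\bfp-1)}$ with at most one fewer part. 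The restriction of $\bfr$'s decomposition to the window $[i,\#\bfp]$, which you use to close the argument, is the natural way to connect $d_\bfr$ to $n_{\bfp|_{[i,\#\bfp]}}$, and the whole reduction is correct. What your route costs is the extra bookkeeping of (A) and (B); what it buys is a valid proof, since the shortcut the paper attempts does not work.
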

\begin{proof}
That \(h^\bfq \circ \tau\) is a bijection of sets follows from Lemmas~\ref{tightimage} and \ref{JIq}. We check that \(h^\bfq \circ \tau\) and its inverse are order-preserving. Let \(\la \bfp, a \ra \preceq^\textup{IP} \la \bfr, b \ra\in \IPGbl^{\preceq \bfq}\). Then \(\#\bfp \leq \#\bfr\) and \(n_\bfp + a \geq d_\bfr(\#\bfp) + b\). We have 
\begin{align*}
L_{\la \bfp, a \ra}^{ \bfq, \infty}(y) 
&= 
\begin{cases}
d_\bfp(y) + a -1 &\textup{if }y \in [1, \#\bfp];\\
\infty & \textup{if }y \in [\#\bfp +1, \#\bfq],
\end{cases}\\
L_{\la \bfr, b \ra}^{ \bfq, \infty}(y) 
&= 
\begin{cases}
d_\bfr(y) + b -1 &\textup{if }y \in [1, \#\bfr];\\
\infty & \textup{if }y \in [\#\bfr +1, \#\bfq].
\end{cases}
\end{align*} 
Then, since \(\#\bfp \leq \#\bfr\), we have \(L_{\la \bfp, a \ra}^{ \bfq, \infty}(y) \geq L_{\la \bfr, b \ra}^{ \bfq, \infty}(y)\) for all \(y \in [\#\bfp + 1, \#\bfq]\). For \(y \in [1, \#\bfp]\), we have
\begin{align*}
L_{\la \bfp, a \ra}^{ \bfq, \infty}(y) &= d_\bfp(y) + a - 1 \geq d_\bfp(y) + d_\bfr(\#\bfp) + b - n_\bfp -1\\
&\geq d_\bfr(\#\bfp) + b -1 \geq d_\bfr(y) + b -1 = L_{\la \bfr, b \ra}^{ \bfq, \infty}(y),
\end{align*}
where the second inequality follows from the fact that \(n_\bfp \geq d_\bfp(y)\) for all \(y \in [1,\#\bfp]\), and the third inequality follows from the fact that \(d_\bfr(\#\bfp) \geq d_\bfr(y)\) for all \(y \in [1,\#\bfp]\). Therefore \(f_\bfq \circ \tau\) is order-preserving. 

In the other direction, assume that \(L_{\la \bfp, a \ra}^{ \bfq, \infty} \preceq^{\bfq, \infty} L_{\la \bfr, b \ra}^{ \bfq, \infty}\). Then
\begin{align*}
 n_\bfp + a -1 =L_{\la \bfp, a \ra}^{ \bfq, \infty}(\#\bfp) \geq L_{\la \bfr, b \ra}^{ \bfq, \infty}(\#\bfp) = d_\bfr(\#\bfp) + b -1,
\end{align*}
so \(n_\bfp + a \geq d_\bfr(\#\bfp) + b\), and thus \(\la \bfp, a \ra \preceq^\textup{IP} \la \bfr, b \ra\), so \((h^\bfq \circ \tau)^{-1}\) is order-preserving as well, completing the proof.
\end{proof}

%%%%%%%%%%%%%%%%%%%%%%%%%%%%%%%%%%%%%%

\section{Robotic arms}\label{robotarms}
Now we define, in more technical fashion, the robotic arm and configuration space setup informally described in \S\ref{robint}.

\subsection{The robotic arm workspace}
Continuing with \(G,b,\ell\) fixed, define the {\em workspace graph} \(W_G = (V_{W}, E_{W})\) with vertices \(V_{W_G} = V_G \times \Z_{\geq 0}\), and edges \(E_{W_G} = E_{W_G}^\textup{hor} \cup E_{W_G}^\textup{ver}\), where:
\begin{enumerate}
\item \(E_{W_G}^\textup{hor} =\{\{ (v,h),(w,h)  \} \mid \{v,w\} \in E_G, h \in \Z_{\geq 0}\}\) are the `horizontal' edges, and;
\item \(E_{W_G}^\textup{ver} = \{\{(v,h),(v,h+1)\} \mid v \in V_G, h \in \Z_{\geq 0}\} \) are the `vertical' edges.
\end{enumerate}
For \(\alpha = (v,h) \in V_{W_G}\), we write \(G(\alpha) := v\), \(\height(\alpha):=h\) and refer to \(h\) as the `height' of \(\alpha\).
We depict \(W_G\) as a skyscraper whose floors are horizontal copies of the graph \(G\), with \(G \times \{0\}\) being viewed as the `ground floor', and with vertical edges connecting each vertex with its copy on the next floor.

\subsection{The robotic arm}
Informally, the {\em robotic arm} \(\mathcal{R}_{G,b,\ell}\) {\em of length \(\ell\), based at \(b\) in the underlying graph \(G\)} is a non-self-intersecting linked sequence of \(\ell\) line segments, anchored on the ground floor of \(S\) at \((b,0)\), with the arm extending along edges in the workspace \(W_G\) either upwards or horizontally within \(G\)-floors. The robotic arm is capable of performing moves which alter its configuration; see \S\ref{movecat}. We make this more precise in what follows.

\begin{Definition}
An {\em \(\mathcal{R}_{G, b, \ell}\)-configuration} (or when there is no chance of confusion, just a {\em configuration}) \(\bfx\) is a \((W_G,(b,0))\)-path of length \(\ell\), with weakly increasing height: \(\height(x_0) \leq \cdots \leq \height(x_\ell)\). We write \(\mathcal{V}_{G,b,\ell}\) for the set of all \(\mathcal{R}_{G,b,\ell}\)-configurations. We call the fully vertical configuration \(\bfb\) with vertex sequence \((b,0), (b,1), \ldots, (b,\ell)\) 
the {\em initial configuration}.
\end{Definition}

A visual example of a graph, workspace, and configuration is shown in Figure~\ref{fig:configex}.

\begin{figure}[h]
\begin{align*}
\hackcenter{
\begin{tikzpicture}[scale=0.6]
\draw[white, fill =white]  (0,0) circle (10pt);
\draw[black, fill =black]  (0,0) circle (3pt);
\draw[black, fill =black]  (3,1) circle (3pt);
\draw[black, fill =black]  (-1,2) circle (3pt);
\draw[black, fill =black]  (5,1) circle (3pt);
\draw[ thick, join=round, cap=round] (0,0)--(3,1)--(-1,2)--(0,0);
\draw[ thick, join=round, cap=round] (3,1)--(5,1);
\draw[ thick, join=round, cap=round, white] (-1,2)--(-1,8.5);
  \node[left] at (0,0){ $\scriptstyle b$};    
   \node[below] at (3,1){ $\scriptstyle a$};    
    \node[left] at (-1,2){ $\scriptstyle c$};    
     \node[right] at (5,1){ $\scriptstyle d$};    
%%%
%%%
        % \node[] at (-2-8,3.5){ $\scriptstyle{\mathbf{3}}$};      
\end{tikzpicture}
}
\;\;\;
\hackcenter{
\begin{tikzpicture}[scale=0.6]
\draw[white, fill =white]  (0,0) circle (10pt);
\draw[black, fill =black]  (0,0) circle (3pt);
\draw[black, fill =black]  (3,1) circle (3pt);
\draw[black, fill =black]  (-1,2) circle (3pt);
\draw[black, fill =black]  (5,1) circle (3pt);
  \node[left] at (0,0){ $\scriptstyle (b,0)$};    
    \node[right] at (0,1){ $\scriptstyle (b,1)$};    
        \node[right] at (0,1+1.3){ $\scriptstyle (b,2)$};   
          \node[right] at (0,1+2.6){ $\scriptstyle (b,3)$};   
            \node[right] at (0,1+3.9){ $\scriptstyle (b,4)$};   
   \node[below] at (3,1){ $\scriptstyle (a,0)$};    
      \node[right] at (2.9,1.9){ $\scriptstyle (a,1)$}; 
        \node[right] at (2.9,1.9+1.3){ $\scriptstyle (a,2)$}; 
         \node[right] at (2.9,1.9+2.6){ $\scriptstyle (a,3)$}; 
          \node[right] at (2.9,1.9+3.9){ $\scriptstyle (a,4)$}; 
    \node[left] at (-1,2){ $\scriptstyle (c,0)$};    
       \node[left] at (-1,2+1.3){ $\scriptstyle (c,1)$};  
        \node[left] at (-1,2+2.6){ $\scriptstyle (c,2)$};  
                \node[left] at (-1,2+3.9){ $\scriptstyle (c,3)$};  
                        \node[left] at (-1,2+5.2){ $\scriptstyle (c,4)$};  
     \node[right] at (5,1){ $\scriptstyle (d,0)$};    
         \node[right] at (5,1+1.3){ $\scriptstyle (d,1)$};  
             \node[right] at (5,1+2.6){ $\scriptstyle (d,2)$};  
               \node[right] at (5,1+3.9){ $\scriptstyle (d,3)$};  
                 \node[right] at (5,1+5.2){ $\scriptstyle (d,4)$};  
\draw[ thick, join=round, cap=round] (0,0)--(3,1)--(-1,2)--(0,0);
\draw[ thick, join=round, cap=round] (0,0+1.3)--(3,1+1.3)--(-1,2+1.3)--(0,0+1.3);
\draw[ thick, join=round, cap=round] (0,0+2.6)--(3,1+2.6)--(-1,2+2.6)--(0,0+2.6);
\draw[ thick, join=round, cap=round] (0,0+3.9)--(3,1+3.9)--(-1,2+3.9)--(0,0+3.9);
\draw[ thick, join=round, cap=round] (0,0+5.2)--(3,1+5.2)--(-1,2+5.2)--(0,0+5.2);
\draw[ thick, join=round, cap=round] (0,0+1.3)--(3,1+1.3)--(-1,2+1.3)--(0,0+1.3);
\draw[ thick, join=round, cap=round] (3,1)--(5,1);
\draw[ thick, join=round, cap=round] (3,1+1.3)--(5,1+1.3);
\draw[ thick, join=round, cap=round] (3,1+2.6)--(5,1+2.6);
\draw[ thick, join=round, cap=round] (3,1+3.9)--(5,1+3.9);
\draw[ thick, join=round, cap=round] (3,1+5.2)--(5,1+5.2);
\draw[ thick, join=round, cap=round] (3,1)--(3,8);
\draw[ thick, join=round, cap=round] (5,1)--(5,8);
\draw[ thick, join=round, cap=round] (0,0)--(0,7.5);
\draw[ thick, join=round, cap=round] (-1,2)--(-1,8.5);
%%%
   \node[] at (1.5, 8){ $\vdots$};       
\end{tikzpicture}
}
\;\;\;
\hackcenter{
\begin{tikzpicture}[scale=0.6]
    \node[left] at (-.2,0){ $\scriptstyle (b,0)$};    
   \node[below] at (3,0.9){ $\scriptstyle (a,0)$};    
    \node[left] at (-1,2){ $\scriptstyle (c,0)$};    
     \node[right] at (5,1){ $\scriptstyle (d,0)$};    
\draw[ thick, join=round, cap=round] (0,0)--(3,1)--(-1,2)--(0,0);
\draw[black, fill =black]  (0,0) circle (3pt);
\draw[black, fill =black]  (3,1) circle (3pt);
\draw[black, fill =black]  (-1,2) circle (3pt);
\draw[black, fill =black]  (5,1) circle (3pt);
\draw[ thick, join=round, cap=round] (0,0+1.3)--(3,1+1.3)--(-1,2+1.3)--(0,0+1.3);
\draw[ thick, join=round, cap=round] (0,0+2.6)--(3,1+2.6)--(-1,2+2.6)--(0,0+2.6);
\draw[ thick, join=round, cap=round] (0,0+3.9)--(3,1+3.9)--(-1,2+3.9)--(0,0+3.9);
\draw[ thick, join=round, cap=round] (0,0+5.2)--(3,1+5.2)--(-1,2+5.2)--(0,0+5.2);
\draw[ thick, join=round, cap=round] (0,0+1.3)--(3,1+1.3)--(-1,2+1.3)--(0,0+1.3);
\draw[ thick, join=round, cap=round] (3,1)--(5,1);
\draw[ thick, join=round, cap=round] (3,1+1.3)--(5,1+1.3);
\draw[ thick, join=round, cap=round] (3,1+2.6)--(5,1+2.6);
\draw[ thick, join=round, cap=round] (3,1+3.9)--(5,1+3.9);
\draw[ thick, join=round, cap=round] (3,1+5.2)--(5,1+5.2);
\draw[ thick, join=round, cap=round] (3,1)--(3,8);
\draw[ thick, join=round, cap=round] (5,1)--(5,8);
\draw[ thick, join=round, cap=round] (0,0)--(0,7.5);
\draw[ thick, join=round, cap=round] (-1,2)--(-1,8.5);
\draw[line width = 1.3mm, join=round, cap=round, red] (3,3.6)--(-1,4.6);
%%%
%%%
\draw[blue, fill =blue]  (0,0) circle (10pt);
\draw[line width = 1.3mm, join=round, cap=round, red] (0,0)--(3,1)--(3,2.3)--(5,2.3)--(5,3.6)--(3,3.6);
\draw[ thick, join=round, cap=round] (0,3.9)--(-1,5.9);
\draw[ thick, join=round, cap=round] (0,3.9)--(0,5.2);
\draw[line width = 2.1mm, join=round, cap=round, white, shorten >=0.5cm] (0,3.9)--(3,4.9);
\draw[line width = 1.3mm, join=round, cap=round, ->, red] (-1,4.6)--(0,2.6)--(0,3.9)--(3,4.9);
\draw[red, fill =red]  (0,0) circle (7pt);
\draw[red, fill =red]  (3,1) circle (7pt);
\draw[red, fill =red]  (3,2.3) circle (7pt);
\draw[red, fill =red]  (5,2.3) circle (7pt);
\draw[red, fill =red]  (5,3.6) circle (7pt);
\draw[red, fill =red]  (3,3.6) circle (7pt);
\draw[red, fill =red]  (-1,4.6) circle (7pt);
\draw[red, fill =red]  (0,2.6) circle (7pt);
\draw[red, fill =red]  (0,3.9) circle (7pt);
       \node[] at (1.5, 8){ $\vdots$};      
\end{tikzpicture}
}
\end{align*}
\caption{A graph \(G\); the associated workspace \(W_G\); the configuration \(\bfx \in \mathcal{V}_{G,b,9}\) with vertex sequence \((b,0),(a,0),(a,1),(d,1),(d,2),(a,2),(c,2),(b,2),(b,3),(a,3)\).}
\label{fig:configex}       % Give a unique label
\end{figure}
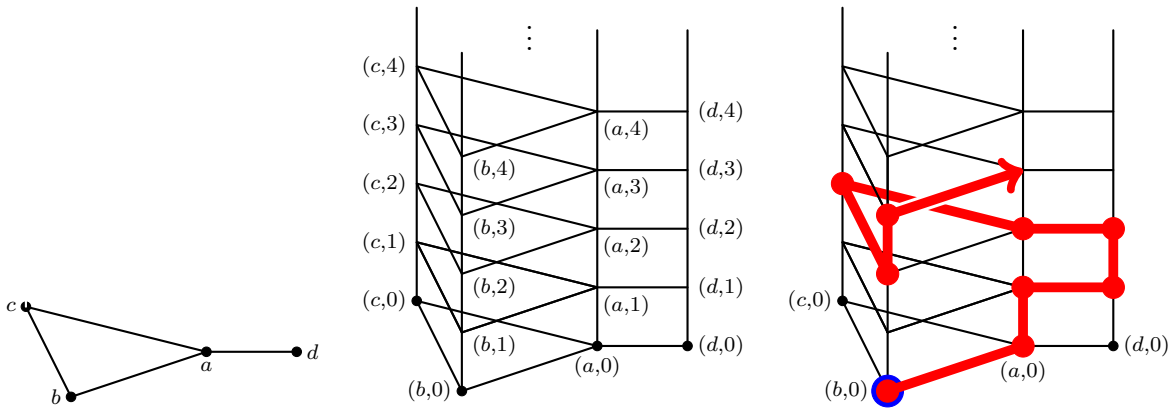

\subsection{Move catalogue}\label{movecat}
Let \(v,w \in V_G\) be such that \(\{v,w\} \in E_G\), and \(h \in \Z_{\geq 0}\). 
We define now a number of operators \(\{\mathsf{T}^{\pm 1}_{v,w,h}, \mathsf{C}^{\pm 1}_{v,w,h}\}\), each defined on a support subset of \(\mathcal{V}_{G,b,\ell}\), which locally alter configurations, injectively carrying configurations in the support back into \(\mathcal{V}_{G,b,\ell}\). See Figure~\ref{fig:moves} for a visual representation of the moves defined in \S\ref{tmm}, \ref{cmm} below.

\subsubsection{Tail moves}\label{tmm}  Set \(\textup{supp}(\mathsf{T}^1_{v,w,h}) \subseteq \mathcal{V}_{G,b,\ell}\) to be the set of all \(\bfx \in \mathcal{V}_{G,b,\ell}\) such that \(\bfx_\ell = \{(v,h),(w,h)\}\). Then define
\(
\mathsf{T}^1_{v,w,h}\bfx \in \mathcal{V}_{G,b,\ell}
\) by replacing \(\bfx_\ell = \{(v,h),(w,h)\}\) in \(\bfx\) with \(\{(v,h),(v,h+1)\}\).
We call \(\mathsf{T}^1_{v,w,h}\) an {\em upward tail move}.
We set \(\mathsf{T}^{-1}_{v,w,h}\) to be inverse to \(\mathsf{T}_{v,w,h}\), defined on \(\textup{supp}(\mathsf{T}^{-1}_{v,w,h}) = \mathsf{T}^1_{v,w,h}(\textup{supp}(\mathsf{T}^1_{v,w,h}))\). We call \(\mathsf{T}^{-1}_{v,w,h}\) a {\em downward tail move}.

\subsubsection{Corner moves}\label{cmm} Set \(\textup{supp}(\mathsf{C}^{1}_{v,w,h}) \subseteq \mathcal{V}_{G,b,\ell}\) to be the set of all \(\bfx \in \mathcal{V}_{G,b,\ell}\) such that \(\bfx_j = \{(v,h),(w,h)\}\) and \(\bfx_{j+1} = \{(w,h),(w,h+1)\}\) for some \(j\), and \(x_i \neq (v,h+1)\) for all \(i \in [0,\ell]\). Then define
\(
\mathsf{C}^1_{v,w,h}\bfx \in \mathcal{V}_{G,b,\ell}
\) by replacing \(\{(v,h),(w,h)\}\), \(\{(w,h),(w,h+1)\}\) in \(\bfx\) with \(\{(v,h),(v,h+1)\}\), \( \{(v,h+1),(w,h+1)\}\), respectively.
We call \(\mathsf{C}^1_{v,w,h}\) an {\em upward corner move}.
We set \(\mathsf{C}^{-1}_{v,w,h}\) to be inverse to \(\mathsf{C}_{v,w,h}\), defined on \(\textup{supp}(\mathsf{C}^{-1}_{v,w,h}) = \mathsf{C}^1_{v,w,h}(\textup{supp}(\mathsf{C}^1_{v,w,h}))\). We call \(\mathsf{C}^{-1}_{v,w,h}\) a {\em downward corner move}.

\begin{figure}[h]
\begin{align*}
\hackcenter{
\begin{tikzpicture}[scale=0.6]
\draw[ thick, join=round, cap=round] (0,0)--(2,0)--(2,2)--(0,2)--(0,0);
\draw[black, fill =black]  (0,0) circle (3pt);
\draw[black, fill =black]  (2,2) circle (3pt);
\draw[black, fill =black]  (0,2) circle (3pt);
\draw[black, fill =black]  (2,0) circle (3pt);
    \node[below] at (0,-.2){ $\scriptstyle(v,h)$};  
     \node[below] at (2,-.2){ $\scriptstyle(w,h)$};  
        \node[above] at (0,2.2){ $\scriptstyle(v,h+1)$};  
     \node[above] at (2,2.2){ $\scriptstyle(w,h+1)$};  
\draw[red, fill =red]  (0,0) circle (7pt);
\draw[line width = 1.3mm, join=round, cap=round, ->, red] (0,0)--(2,0);
 \draw[thick, ->] (3,1.1) .. controls ++(.45,0.45) and ++(-.45,0.45) .. (5,1.1);
  \draw[thick, ->] (5,0.9) .. controls ++(-.45,-0.45) and ++(.45,-0.45) .. (3,0.9);
  \node[] at (4,2){$\scriptstyle\mathsf{T}^{+1}_{v,w,h}$};
    \node[] at (4,0){$\scriptstyle\mathsf{T}^{-1}_{v,w,h}$};
  \draw[ thick, join=round, cap=round] (0+6,0)--(2+6,0)--(2+6,2)--(0+6,2)--(0+6,0);
\draw[black, fill =black]  (0+6,0) circle (3pt);
\draw[black, fill =black]  (2+6,2) circle (3pt);
\draw[black, fill =black]  (0+6,2) circle (3pt);
\draw[black, fill =black]  (2+6,0) circle (3pt);
    \node[below] at (0+6,-.2){ $\scriptstyle(v,h)$};  
     \node[below] at (2+6,-.2){ $\scriptstyle(w,h)$};  
        \node[above] at (0+6,2.2){ $\scriptstyle(v,h+1)$};  
     \node[above] at (2+6,2.2){ $\scriptstyle(w,h+1)$};  
\draw[red, fill =red]  (0+6,0) circle (7pt);
\draw[line width = 1.3mm, join=round, cap=round, ->, red] (0+6,0)--(0+6,2);
\end{tikzpicture}
}
\qquad
\qquad
\hackcenter{
\begin{tikzpicture}[scale=0.6]
\draw[ thick, join=round, cap=round] (0,0)--(2,0)--(2,2)--(0,2)--(0,0);
\draw[black, fill =black]  (0,0) circle (3pt);
\draw[black, fill =black]  (2,2) circle (3pt);
\draw[black, fill =black]  (0,2) circle (3pt);
\draw[black, fill =black]  (2,0) circle (3pt);
    \node[below] at (0,-.2){ $\scriptstyle(v,h)$};  
     \node[below] at (2,-.2){ $\scriptstyle(w,h)$};  
        \node[above] at (0,2.2){ $\scriptstyle(v,h+1)$};  
     \node[above] at (2,2.2){ $\scriptstyle(w,h+1)$};  
\draw[red, fill =red]  (0,0) circle (7pt);
\draw[red, fill =red]  (2,0) circle (7pt);
\draw[red, fill =red]  (2,2) circle (7pt);
\draw[line width = 1.3mm, join=round, cap=round, red] (0,0)--(2,0)--(2,2);
 \draw[thick, ->] (3,1.1) .. controls ++(.45,0.45) and ++(-.45,0.45) .. (5,1.1);
  \draw[thick, ->] (5,0.9) .. controls ++(-.45,-0.45) and ++(.45,-0.45) .. (3,0.9);
  \node[] at (4,2){$\scriptstyle\mathsf{C}^{+1}_{v,w,h}$};
    \node[] at (4,0){$\scriptstyle\mathsf{C}^{-1}_{v,w,h}$};
  \draw[ thick, join=round, cap=round] (0+6,0)--(2+6,0)--(2+6,2)--(0+6,2)--(0+6,0);
\draw[black, fill =black]  (0+6,0) circle (3pt);
\draw[black, fill =black]  (2+6,2) circle (3pt);
\draw[black, fill =black]  (0+6,2) circle (3pt);
\draw[black, fill =black]  (2+6,0) circle (3pt);
    \node[below] at (0+6,-.2){ $\scriptstyle(v,h)$};  
     \node[below] at (2+6,-.2){ $\scriptstyle(w,h)$};  
        \node[above] at (0+6,2.2){ $\scriptstyle(v,h+1)$};  
     \node[above] at (2+6,2.2){ $\scriptstyle(w,h+1)$};  
\draw[red, fill =red]  (0+6,0) circle (7pt);
\draw[red, fill =red]  (0+6,2) circle (7pt);
\draw[red, fill =red]  (2+6,2) circle (7pt);
\draw[line width = 1.3mm, join=round, cap=round, red] (0+6,0)--(0+6,2)--(2+6,2);
\end{tikzpicture}
}
\end{align*}
\caption{Tail move; corner move.}
\label{fig:moves}       % Give a unique label
\end{figure}
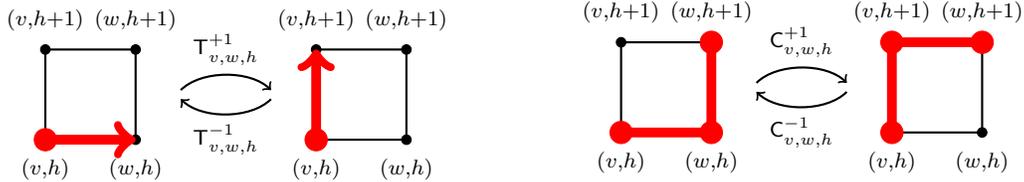

\subsubsection{Legal moves} We call \(\mathscr{M} = \{\mathsf{T}^{\pm 1}_{v,w,h}, \mathsf{C}^{\pm 1}_{v,w,h} \mid v,w \in V_G, \;\{v,w\} \in E_G, \;h \in \Z_{\geq 0}\}\) the set of {\em moves}. For \(\bfx \in \mathcal{V}_{G,b,\ell}\), we call \(\mathscr{L}_{\bfx}:=\{ \mathsf{M} \in \mathscr{M} \mid  \bfx \in \textup{supp}(\mathsf{M})\}\) the set of {\em legal moves for} \(\bfx\). If \(\mathscr{A} \subseteq \mathscr{L}_{\bfx}\), then we abuse notation and write \(\bfx \in \textup{supp}(\mathscr{A})\).

\subsubsection{Commutative moves} Let \(\mathscr{A} \subseteq \mathscr{M}\). We say \(\mathscr{A}\) is a {\em commutative set of moves} provided that it satisfies the following property: For any \(\bfx \in \textup{supp}(\mathscr{A})\), and any
subset \(\mathscr{S} = \{\mathsf{S}_1, \ldots, \mathsf{S}_{|\mathscr{S}|}\} \subseteq \mathscr{A}\), the moves in \(\mathscr{S}\) may be legally applied to \(\bfx\) in any order, with the resulting state dependent only on \(\mathscr{S}\), and not on the order of application.
I.e., for every \(j\)-permutation \((i_1, \ldots, i_j)\) of \([1, |\mathscr{S}|]\), we have that \(\mathsf{S}_{i_{j-1}} \cdots \mathsf{S}_{i_1} \bfx \in \textup{supp}(\mathsf{S}_{i_j})\) and \(\mathsf{S}_{i_{j}} \cdots \mathsf{S}_{i_1} \bfx = \mathsf{S}_{t_{j}} \cdots \mathsf{S}_{t_1} \bfx\) for any other permutation \((t_1, \ldots, t_j)\) of \(\{i_1, \ldots, i_j\}\).

Let \(\mathscr{A}\) be a commutative set of moves. If \(\mathscr{S} = \{\mathsf{S}_1, \ldots, \mathsf{S}_{|\mathscr{S}|}\} \subseteq \mathscr{A}\) and \(\bfx \in \textup{supp}(\mathscr{A})\), we may write \(\mathscr{S}\bfx := \mathsf{S}_{|\mathscr{S}|} \cdots \mathsf{S}_1 \bfx  \), since commutativity removes any ambiguity in this assignment. It follows from definitions that \(\mathscr{S} \bfx \neq \mathscr{S}' \bfx\) for every distinct \(\mathscr{S}, \mathscr{S}' \subseteq \mathscr{A}\), so that \(|\{\mathscr{S} \bfx \mid \mathscr{S} \subseteq \mathscr{A}\}| = 2^{|\mathscr{A}|}\).

\begin{Example}
Consider the configuration \(\bfx \in \mathcal{V}_{G,b,9}\) in Figure~\ref{fig:configex}. The set of legal moves for \(\bfx\) are:
\begin{align*}
\mathcal{L}_\bfx = \{ \mathsf{C}^{+1}_{b,a,0}, \mathsf{C}^{-1}_{a,d,0}, \mathsf{C}^{+1}_{c,b,2}, \mathsf{T}^{+1}_{b,a,3} \}.
\end{align*}
The subsets of \(\mathcal{L}_\bfx\) which are commutative are those which do not contain both \(\mathsf{C}^{+1}_{b,a,0}\) and \( \mathsf{C}^{-1}_{a,d,0}\).
\end{Example}

\subsection{Transition graph} The {\em transition graph \(\mathcal{T}_{G,b,\ell}\) of the robotic arm \(\mathcal{R}_{G,b,\ell}\)} is defined to have vertices \(\mathcal{V}_{G,b,\ell}\), with edges \(\{\{\bfx, \mathsf{M}\bfx\} \mid \mathsf{M} \in \mathscr{M}, \bfx \in \textup{supp}(\mathsf{M})\}\) corresponding to legal moves.

\begin{Lemma}
The transition graph  \(\mathcal{T}_{G,b,\ell}\) is connected.
\end{Lemma}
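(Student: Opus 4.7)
The plan is to reduce the connectedness of $\mathcal{T}_{G,b,\ell}$ to showing that every configuration $\bfx \in \mathcal{V}_{G,b,\ell}$ is joined by a finite sequence of legal moves to the initial configuration $\bfb$; since every move is reversible, this immediately yields that any two configurations are connected through $\bfb$. To implement the reduction I would introduce the potential $\psi(\bfx) = \ell(\ell+1)/2 - \sum_{i=0}^{\ell}\height(x_i)$. Because heights along a configuration form a nondecreasing sequence starting at $0$ whose consecutive differences are at most $1$, one has $\height(x_i) \leq i$ for all $i$, so $\psi(\bfx) \geq 0$, with equality precisely when $\height(x_i) = i$ for every $i$, i.e.\ precisely when $\bfx = \bfb$. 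The problem thus reduces to proving: if $\bfx \neq \bfb$, then some upward tail move $\mathsf{T}^{+1}$ or upward corner move $\mathsf{C}^{+1}$ is legal at $\bfx$, and any such move decreases $\psi$ by exactly $1$.

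Given $\bfx \neq \bfb$, some edge of $\bfx$ must be horizontal (otherwise every edge is vertical upward from $(b,0)$ and $\bfx = \bfb$), so I would take $k$ to be the largest index with $\bfx_k$ horizontal and write $\bfx_k = \{(v,h),(w,h)\}$ with $x_{k-1} = (v,h)$ and $x_k = (w,h)$. If $k = \ell$ the tail is horizontal and $\mathsf{T}^{+1}_{v,w,h}$ is immediately legal; it replaces $x_\ell = (w,h)$ with $(v,h+1)$, raising one height by $1$ and decreasing $\psi$ by $1$. If $k < \ell$ the maximality of $k$ forces every subsequent edge to be vertical, and the nondecreasing height condition forces each such edge to be traversed upward, so $x_{k+j} = (w,h+j)$ for every $j \in [1,\ell-k]$. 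In particular $\bfx_{k+1} = \{(w,h),(w,h+1)\}$, and an upward corner move $\mathsf{C}^{+1}_{v,w,h}$ is formally applicable provided its non-self-intersection hypothesis $(v,h+1) \notin \{x_0,\dots,x_\ell\}$ can be verified.

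This non-intersection check is the only nontrivial step, and I expect to dispatch it as follows: monotonicity of heights gives $\height(x_i) \leq h < h+1$ for $i \leq k-1$, ruling out those indices; $x_k = (w,h) \neq (v,h+1)$ as $v \neq w$ (loops are excluded) and also by a height mismatch; and the explicit description $x_{k+j} = (w,h+j)$ shows that each $x_i$ with $i \geq k+1$ has first coordinate $w \neq v$. Hence the corner move is legal, and it replaces $x_k = (w,h)$ with $(v,h+1)$, again decreasing $\psi$ by $1$. Iterating the up-move selection finitely many times drives $\bfx$ to $\bfb$, which yields the desired connectivity of $\mathcal{T}_{G,b,\ell}$.
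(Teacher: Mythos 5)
Your proof is correct. Both your argument and the paper's reduce the problem to showing each $\bfx \in \mathcal{V}_{G,b,\ell}$ is connected to the initial configuration $\bfb$, and both hinge on the same structural observation: the last horizontal edge of a non-initial configuration can always be raised. The paper argues by induction on the number of horizontal edges and eliminates the last one in a single block of moves, $\mathsf{T}^1_{v,w,h+\ell-j}\mathsf{C}^1_{v,w,h+\ell-j-1}\cdots\mathsf{C}^1_{v,w,h}$, without pausing to verify legality of the intermediate corner moves. You instead introduce the height-sum potential $\psi(\bfx) = \ell(\ell+1)/2 - \sum_i \height(x_i)$, show it is a nonnegative integer vanishing only at $\bfb$, and proceed one legal move at a time; this trades the block argument for a clean monovariant and, as a bonus, forces you to explicitly check the non-self-intersection hypothesis $(v,h+1)\notin\{x_0,\ldots,x_\ell\}$ for the corner move, which you do correctly by splitting on $i < k$, $i = k$, and $i > k$. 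The two proofs are substantively the same in mechanism; yours is somewhat more careful about legality at each step, while the paper's is more economical in presentation.
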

\begin{proof}
We prove that every \(\bfx \in \mathcal{V}_{G,b,\ell}\) is connected to the initial configuration \(\bfb\). We go by induction on the number \(m\) of horizontal edges in \(\bfx\), the base case \(m=0\) being clear. Now make the induction assumption and assume that \(\bfx_j = \{(v,h),(w,h)\}\) is the last horizontal edge in \(\bfx\), so that \(\bfx_{j+i} = \{(w,h+i-1), (w,h+i)\}\) for \(i = 1, \ldots \ell - j\). Then \(\bfx\) is connected to \[\bfy :=\mathsf{T}^1_{v,w,h+\ell - j}\mathsf{C}^1_{v,w,h+\ell-j-1}\cdots \mathsf{C}^1_{v,w,h+1}\mathsf{C}^1_{v,w,h}\bfx\]
in \(\mathcal{T}_{G,b,\ell}\), 
with \(\bfy_k = \bfx_k \) for \(k = 1, \ldots, j-1\), and \(\bfy_{j+i} = \{(v,h+i),(v,h+i+1)\}\) for \(i = 0, \ldots, \ell -j + 1\). Therefore \(\bfy\) has fewer horizontal segments than \(\bfx\), and thus is connected by the induction assumption  to \(\bfb\), completing the proof.
\end{proof}

\subsection{Configuration space}\label{robcube} The {\em configuration space \(\mathcal{S}_{G,b,\ell}\) of the robotic arm \(\mathcal{R}_{G,b,\ell}\)} is the cubical complex defined as follows. The 0-skeleton of \(\mathcal{S}_{G,b,\ell}\) is the set of vertices \(\mathcal{V}_{G,b,\ell}\). Cubes are added as follows. Assume that \(\bfY\) is a set of \(2^k\) configurations, and that there is a set of \(k\) commutative moves \(\mathscr{A}\) and \(\bfx \in \textup{supp}(\mathscr{A})\) such that \(\bfY = \{\mathscr{S} \bfx \mid \mathscr{S} \subseteq \mathscr{A}\}\). Then \(\bfY\) forms the vertices of a \(k\)-cube in \(\mathcal{S}_{G,b,\ell}\), which we label by \([\mathscr{A}; \bfx]\). The boundary of this \(k\)-cube is the collection of \(2k\) faces
\(
\bigcup_{\mathsf{M} \in \mathscr{A}} 
[\mathscr{A} \backslash \mathsf{M}; \bfx] \cup
[\mathscr{A} \backslash \mathsf{M}; \mathsf{M} \bfx].
\)
We endow \(\mathcal{S}_{G,b,\ell}\) with a Euclidean metric by letting each \(k\)-cube be a unit cube.

\begin{Example}
In Figure~\ref{fig:fullspace} we show the full configuration space \(\mathcal{S}_{C_3, b, 5}\) for the robotic arm of length five over the the cycle graph \(C_3\) with three vertices. The initial configuration \(\bfb\) is indicated in green. The \(0\)-skeleton is the set of configurations \(\mathcal{V}_{C_3, b, 5}\). The \(1\)-skeleton is the transition graph \(\mathcal{T}_{C_3, b,5}\). There at most \(3\) commutative moves from any given configuration in this setting, so \(\mathcal{S}_{C_3, b, 5}\) contains no \(k\)-cubes for \(k \geq 4\).
\end{Example}

\subsection{Connecting configurations and path tableaux}

Let \(\bfx \in \mathcal{V}_{G,b,\ell}\). Let \(\{i_1 < \cdots < i_t \} \subseteq [1, \#\bfx]\) be the subset of indices of horizontal edges in \(\bfx\). Then it follows that we have a \((G,b)\)-path \(\overline \bfx\) given by the edges \((G(\bfx_{i_1}), \ldots, G(\bfx_{i_t}))\), and a labeling function \(L_{\bfx}:[1,t] \to \Z_{\geq 0}\) defined by \(L_{\bfx}(r) = \height(\bfx_{i_r})\).

\begin{Lemma}\label{VTabbij}
The assignment \(\bfx \mapsto (\overline \bfx, L_\bfx)\) gives a well-defined bijection \(f: \mathcal{V}_{G,b,\ell} \to \Tab\).
\end{Lemma}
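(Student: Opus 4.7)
The plan is to verify well-definedness by checking Definition~\ref{deftab}(i)--(iii), then construct an explicit inverse. The crucial structural observation is that the weakly-increasing height condition, combined with $\height(x_0) = 0$, forces $\height(x_k)$ to equal the number of vertical edges among $\bfx_1, \ldots, \bfx_k$; in particular, the heights, the horizontal-edge positions $i_1 < \cdots < i_t$, and the number of interspersed vertical edges each determine one another.

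For well-definedness: $\overline{\bfx}$ is a genuine $(G,b)$-path because between consecutive horizontal edges $\bfx_{i_r}, \bfx_{i_{r+1}}$ only vertical edges occur, and these preserve the $G$-coordinate, giving $G(x_{i_r}) = G(x_{i_{r+1}-1})$. Axiom (i) is immediate from weak monotonicity of heights, and axiom (iii) follows from the estimate $L_\bfx(t) + t = (i_t - t) + t = i_t \leq \ell$. Axiom (ii) is the most delicate part: suppose $r < r'$ with both corresponding horizontal edges at height $h = L_\bfx(r) = L_\bfx(r')$ and sharing an endpoint in $G$. Since heights are weakly increasing and start and end of the subpath share height $h$, every intermediate edge must also be horizontal at height $h$, so the sub-path $x_{i_r - 1}, \ldots, x_{i_{r'}}$ lies entirely on the floor $G \times \{h\}$; but then its endpoints coincide in both $G$-coordinate and height, contradicting non-self-intersection of $\bfx$ (an implicit hypothesis on configurations consistent with the informal description in \S\ref{robint}).

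For the inverse, given $(\bfp, L) \in \Tab$, I would build a configuration by inserting $L(r) - L(r-1)$ vertical edges before each horizontal edge $\bfp_r$ (with $L(0) := 0$), and appending $\ell - \#\bfp - L(\#\bfp)$ trailing vertical edges; axiom (iii) guarantees this final count is non-negative. The resulting path has length $\ell$, starts at $(b,0)$, and has weakly increasing heights, hence lies in $\mathcal{V}_{G,b,\ell}$. A direct check confirms that $f$ and this construction are mutually inverse. The principal obstacle is axiom (ii), which is the only condition nontrivially coupling horizontal edges across different positions; once non-self-intersection of configurations is invoked, the rest of the verification is routine bookkeeping.
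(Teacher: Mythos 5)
Your argument follows the same route as the paper: verify Definition~\ref{deftab}(i)--(iii) for $(\overline{\bfx},L_\bfx)$, then build the inverse by reinserting the correct number of vertical edges before each horizontal one, with padding at the end. Your observation that $L_\bfx(t)=i_t-t$ (so that (iii) is literally $i_t\leq\ell$) is a slightly crisper bookkeeping than the paper's wording, and you correctly flag the non-self-intersection hypothesis as implicit in the definition of a configuration (the informal description in \S\ref{robint} has it, but the formal Definition of an $\mathcal{R}_{G,b,\ell}$-configuration as stated omits it, even though the paper's own proof invokes it for (ii), as you do). For (ii), your conclusion that the intermediate sub-path sits on a single floor is more than you need: once $p_{r-1}=p_{r'}$ and $L_\bfx(r)=L_\bfx(r')=h$, the workspace vertices $x_{i_r-1}$ and $x_{i_{r'}}$ are already the same point $(p_{r-1},h)$, so the repetition is immediate.

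One small omission: since you correctly treat non-self-intersection as part of what it means to lie in $\mathcal{V}_{G,b,\ell}$, the inverse construction should also verify it for the path $\hat{\bfy}$ you build from $(\bfp,L)$. Having weakly increasing height, length $\ell$, and start $(b,0)$ is not quite enough under that reading; you need Definition~\ref{deftab}(ii) to rule out revisiting a workspace vertex. This is how the paper closes the loop, and it is a one-line addition: if $\hat{\bfy}$ revisited $(v,h)$, the two visits would either both be on floor $h$, forcing a label-$h$ cycle in $\bfp$ and contradicting (ii), or would straddle a height increase, contradicting weak monotonicity. With that sentence added, your proof matches the paper's.
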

\begin{proof}
We check that (i)--(iii) of Definition~\ref{deftab} are satisfied by \((\overline \bfx, L_\bfx)\). This is straightforward: (i) follows from the fact that the vertices of \(\bfx\) are of weakly increasing height, (ii) follows from the fact that \(\bfx\) is non-self intersecting; (iii) follows from the fact that 
\(L(\#\overline{\bfx})\) is the height of the highest horizontal segment in \(\bfx\) (and hence less than or equal to the number of vertical segments in \(\bfx\)), \(\#\overline \bfx\) is the number of horizontal segments in \(\bfx\), and \(\ell\) is the total number of segments in \(\bfx\).

To see that \(f\) is a bijection, we manually construct an inverse map. Let \((\bfp, L) \in \Tab\). Define an associated sequence \(\bfy\) of horizontal edges in \(W\) by setting: \(\bfy_i = \{(p_{i-1}, L(i)), (p_i, L(i))\}\) for \(i \in [1, \#\bfp]\). There is a unique way to complete \(\bfy\) to an element \(\hat \bfy \in \mathcal{V}_{G,b,\ell}\) by inserting only vertical edges. Specifically, noting that \(p_0 = b\), we have:
\begin{align*}
\hat \bfy = &( \{(p_0,0), (p_0,1)\}, \ldots, \{(p_0,L(1)-1), (p_0,L(1))\}, \{(p_0, L(1)), (p_1, L(1))\},\\
&\hspace{5mm} \{(p_1,L(1)), (p_1,L(1) + 1)\}, \ldots, \{(p_1,L(2)-1), (p_1,L(2))\}, \{(p_1, L(2)), (p_2, L(2))\},\\
&\hspace{10mm} \ldots\\
&\hspace{10mm}
\{(p_{\#\bfp}, L(\#\bfp)),(p_{\#\bfp}, L(\#\bfp)+1) \}, \ldots, \{(p_{\#\bfp}, \ell - \#\bfp-1),(p_{\#\bfp}, \ell - \#\bfp) \}).
\end{align*}
It is straightforward to check that this is a well-defined \((W,\bfb)\)-path of length \(\ell\) and weakly increasing height. That it is non-self-intersecting follows from Definition~\ref{deftab}(ii). Thus the assignment \((\bfp, L) \mapsto \hat \bfy\) defines a function \(\Tab \to \mathcal{V}_{G,b,\ell}\) which is easily checked to be a mutual inverse for \(f\).
\end{proof}

\begin{landscape}
\begin{figure}
{}
\vspace{2cm}
{}
\includegraphics[width=22.5cm]{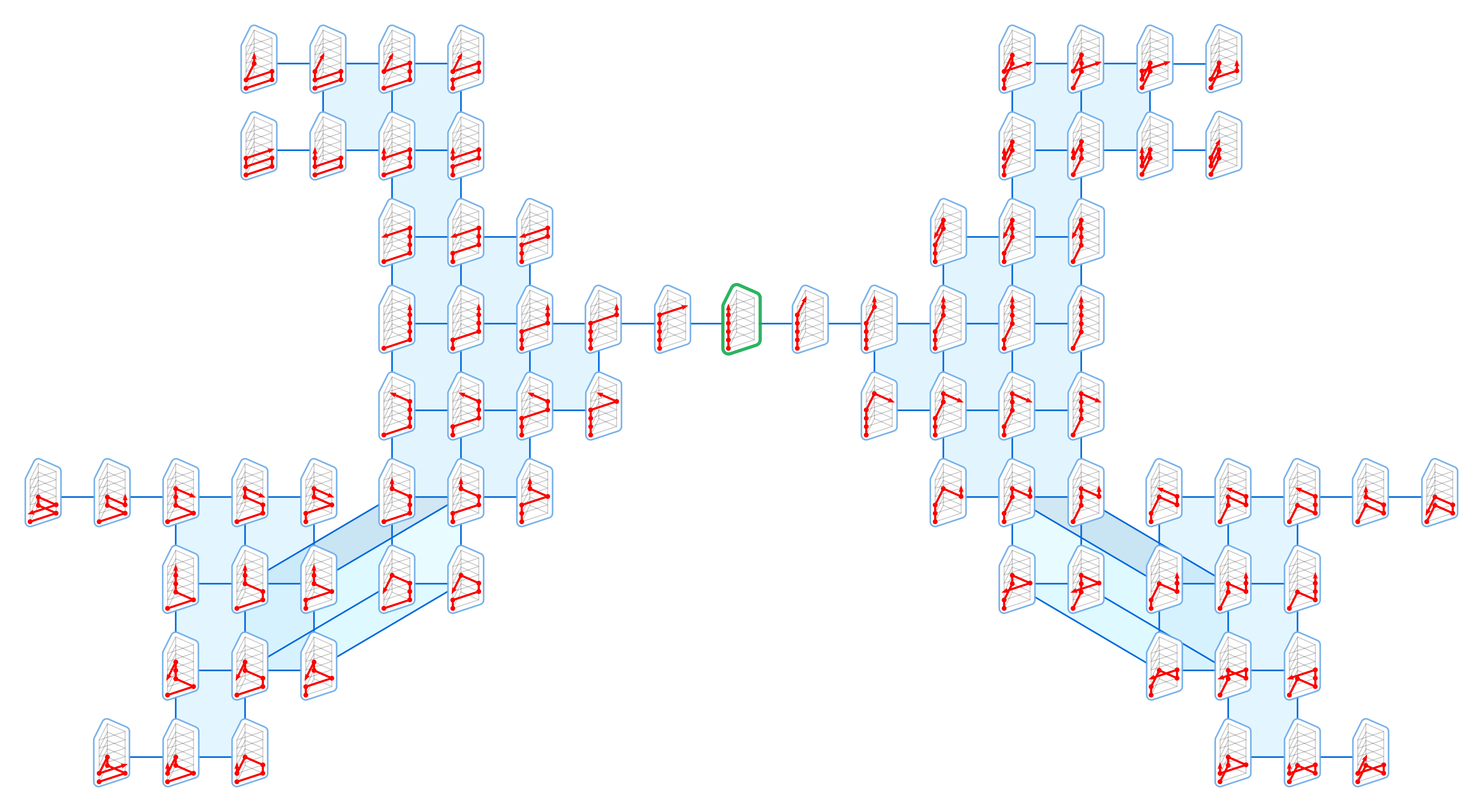}
\caption{The robotic arm configuration space \(\mathcal{S}_{C_3, b, 5}\). 
}
\label{fig:fullspace}       % Give a unique label
\end{figure}
\end{landscape}

\begin{Example}
In Figure~\ref{fig:configtotab} we show a visual depiction of the bijection  \(f: \mathcal{V}_{G,b,\ell} \to \Tab\). One can think of this bijection as compressing the vertical segments and recording only the heights of the horizontal segments of the configuration as labels on the resulting \(G\)-path.
\end{Example}

\begin{figure}[h]
\begin{align*}
\hackcenter{
\begin{tikzpicture}[scale=0.6] 
\draw[ thick, join=round, cap=round] (0,0)--(3,1)--(-1,2)--(0,0);
\draw[black, fill =black]  (0,0) circle (3pt);
\draw[black, fill =black]  (3,1) circle (3pt);
\draw[black, fill =black]  (-1,2) circle (3pt);
\draw[black, fill =black]  (5,1) circle (3pt);
\draw[ thick, join=round, cap=round] (0,0+1.3)--(3,1+1.3)--(-1,2+1.3)--(0,0+1.3);
\draw[ thick, join=round, cap=round] (0,0+2.6)--(3,1+2.6)--(-1,2+2.6)--(0,0+2.6);
\draw[ thick, join=round, cap=round] (0,0+3.9)--(3,1+3.9)--(-1,2+3.9)--(0,0+3.9);
\draw[ thick, join=round, cap=round] (0,0+5.2)--(3,1+5.2)--(-1,2+5.2)--(0,0+5.2);
\draw[ thick, join=round, cap=round] (0,0+1.3)--(3,1+1.3)--(-1,2+1.3)--(0,0+1.3);
\draw[ thick, join=round, cap=round] (3,1)--(5,1);
\draw[ thick, join=round, cap=round] (3,1+1.3)--(5,1+1.3);
\draw[ thick, join=round, cap=round] (3,1+2.6)--(5,1+2.6);
\draw[ thick, join=round, cap=round] (3,1+3.9)--(5,1+3.9);
\draw[ thick, join=round, cap=round] (3,1+5.2)--(5,1+5.2);
\draw[ thick, join=round, cap=round] (3,1)--(3,8);
\draw[ thick, join=round, cap=round] (5,1)--(5,8);
\draw[ thick, join=round, cap=round] (0,0)--(0,7.5);
\draw[ thick, join=round, cap=round] (-1,2)--(-1,8.5);
\draw[line width = 1.3mm, join=round, cap=round, red] (3,3.6)--(3,3.6+1.3)--(-1,4.6+1.3);
%%%
%%%
\draw[blue, fill =blue]  (0,0) circle (10pt);
\draw[line width = 1.3mm, join=round, cap=round, red] (0,0)--(3,1)--(3,2.3)--(5,2.3)--(5,3.6)--(3,3.6);
\draw[ thick, join=round, cap=round] (0,3.9+1.3)--(-1,5.9+1.3);
\draw[ thick, join=round, cap=round] (0,3.9+1.3)--(0,5.2+1.3);
\draw[line width = 2.1mm, join=round, cap=round, white, shorten >=0.5cm] (0,3.9+1.3)--(3,4.9+1.3);
\draw[line width = 1.3mm, join=round, cap=round, ->, red] (-1,4.6+1.3)--(0,2.6+1.3)--(0,3.9+1.3)--(3,4.9+1.3);
\draw[red, fill =red]  (0,0) circle (7pt);
\draw[red, fill =red]  (3,1) circle (7pt);
\draw[red, fill =red]  (3,2.3) circle (7pt);
\draw[red, fill =red]  (3,4.9) circle (7pt);
\draw[red, fill =red]  (5,2.3) circle (7pt);
\draw[red, fill =red]  (5,3.6) circle (7pt);
\draw[red, fill =red]  (3,3.6) circle (7pt);
\draw[red, fill =red]  (-1,4.6+1.3) circle (7pt);
%\draw[red, fill =red]  (0,2.6) circle (7pt);
\draw[red, fill =red]  (0,3.9) circle (7pt);
\draw[red, fill =red]  (0,3.9+1.3) circle (7pt);
       \node[] at (1.5, 8){ $\vdots$};      
\end{tikzpicture}
}
\;\;\;
\xrightarrow{\;\;f\;\;\;}
\;\;\;
\hackcenter{
\begin{tikzpicture}[scale=0.6]
\draw[black, fill =black]  (0,0) circle (3pt);
\draw[black, fill =black]  (3,1) circle (3pt);
\draw[black, fill =black]  (-1,2) circle (3pt);
\draw[black, fill =black]  (5,1) circle (3pt);
\draw[ thick, join=round, cap=round] (0,0)--(3,1)--(-1,2)--(0,0);
\draw[ thick, join=round, cap=round] (3,1)--(5,1);
  \node[above] at (0.5,0.5){ $\scriptstyle 0$};     
    \node[above] at (4,-0.1){ $\scriptstyle 1$};    
     \node[above] at (4,1.4){ $\scriptstyle 2$};   
     \node[above] at (1,1.8){ $\scriptstyle 3$};   
       \node[above] at (-1.4,0.6){ $\scriptstyle 3$};  
             \node[above] at (1,-1){ $\scriptstyle 4$};  
%%%
%%%
 \draw[ ultra thick, join=round, cap=round, red] (0,0) .. controls ++(.25,0.5) and ++(-.45,0.15) ..(3,1);
  \draw[ ultra thick, join=round, cap=round, red] (3,1) .. controls ++(.45,-0.45) and ++(-.45,-0.45) .. (5,1);
    \draw[ ultra thick, join=round, cap=round, red] (5,1) .. controls ++(-.45,0.45) and ++(+.45,0.45) .. (3,1.2);
       \draw[ ultra thick, join=round, cap=round, red] (3,1.2) .. controls ++(-.45,0.45) and ++(+.45,0.25) .. (-1,2);
        \draw[ ultra thick, join=round, cap=round, red] (-1,2) .. controls ++(-.45,-0.45) and ++(-.45,0.25) .. (-0.2,-0.2);
          \draw[ ultra thick, join=round, cap=round, red, ->] (-0.2,-0.2) .. controls ++(.45,-0.45) and ++(-.55,-0.5) .. (3,0.7);
\end{tikzpicture}
}
\end{align*}
\caption{A configuration \(\bfx \in \mathcal{V}_{G,b,10}\) and its image \(f(\bfx) \in \textup{Tab}_{G,b,10}\).}
\label{fig:configtotab}
\end{figure}

\subsection{Connecting cubical complexes}
In this section we establish the first main result of the paper, describing an isomorphism between the cubical complexes \(\mathcal{X}(\IPGbl)\) and \(\mathcal{S}_{G,b,\ell}\). This isomorphism shows that \(\mathcal{S}_{G,b,\ell}\) is a CAT(0) complex, and yields an algorithm for optimally reconfiguring the robot arm, as detailed in \S\ref{mainres}.

Let \(\bfx \in \mathcal{V}_{G,b,\ell}\). For \(i \in [1,\#\overline \bfx]\), define \(\bfsig^{(\bfx, i)}\) to be the length \(i\) prefix path \(\overline{\bfx}_1 \cdots \overline{\bfx}_i\) in \(\overline \bfx\), and \(a^{(\bfx, i)} = L_\bfx(i) - n_{\bfsig^{(\bfx,i)}}+1\).

\begin{Lemma}\label{0skel}
There is a well-defined bijection
\begin{align}\label{Sigmap}
\Sigma:\mathcal{V}_{G,b,\ell} \to  \mathcal{L}_\textup{con}(\IPGbl),
\qquad
\bfx \mapsto
\mathcal{I} \{
\la \bfsig^{(\bfx,1)}, a^{(\bfx,1)} \ra,
\ldots,
\la \bfsig^{(\bfx,\#\overline \bfx)}, a^{(\bfx,\#\overline \bfx)} \ra
\}
 \end{align}
\end{Lemma}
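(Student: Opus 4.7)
The plan is to establish three things: (a) each generating indexed path $\langle \bfsig^{(\bfx,i)}, a^{(\bfx,i)}\rangle$ actually lies in $\IPGbl$, (b) the lower set $\Sigma(\bfx)$ is consistent, and (c) $\Sigma$ is a bijection. For (a), the key observation is that $(\bfsig^{(\bfx,i)}, L_\bfx|_{[1,i]})$ is itself a $(G,b,\ell)$-tableau: it inherits Definition~\ref{deftab}(i)--(ii) from $(\overline\bfx, L_\bfx) \in \Tab$ supplied by Lemma~\ref{VTabbij}, and satisfies (iii) since $L_\bfx(i) + i \leq L_\bfx(\#\overline\bfx) + \#\overline\bfx \leq \ell$. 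The lower bound $a^{(\bfx,i)} \geq 0$ then follows from the general inequality $L(\#\bfp) \geq n_\bfp - 1$ for any tableau $(\bfp, L)$, which is established in the opening paragraph of the proof of Lemma~\ref{tightimage}, while the upper bound follows directly from Definition~\ref{deftab}(iii). For (b), since all the generating paths $\bfsig^{(\bfx,i)}$ are prefixes of $\overline\bfx$ and hence pairwise prefix-comparable, the same holds for any two paths appearing in elements of $\Sigma(\bfx)$, so no two elements are related by $\nleftrightarrow^\textup{IP}$.

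For (c), I construct an explicit inverse $\Sigma^{-1}$ by composing bijections supplied by the preceding material. Given $\mu \in \mathcal{L}_\textup{con}(\IPGbl)$, consistency forces all paths appearing in $\mu$ to be pairwise prefix-comparable, so there is a unique longest path $\bfq_\mu$ among them (with convention $\bfq_\mu = \bfempty^b$ if $\mu = \varnothing$), and $\mu$ is a lower set in the sub-poset $\IPGbl^{\preceq \bfq_\mu}$. By Lemma~\ref{posetbij}, $\IPGbl^{\preceq \bfq_\mu} \cong J(\Tab_{G,b,\ell}^{\bfq_\mu, \infty})$ via $h^{\bfq_\mu} \circ \tau$, and then Birkhoff's theorem (Theorem~\ref{Birk}) identifies $\mathcal{L}(\IPGbl^{\preceq \bfq_\mu})$ with the distributive lattice $\Tab_{G,b,\ell}^{\bfq_\mu, \infty}$. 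Thus $\mu$ corresponds to a unique $U_\mu \in \Tab_{G,b,\ell}^{\bfq_\mu, \infty}$, and I set $\Sigma^{-1}(\mu) := f^{-1}((g^{\bfq_\mu})^{-1}(U_\mu))$.

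The main technical step is verifying $\Sigma^{-1} \circ \Sigma = \id$. The longest path in $\Sigma(\bfx)$ is clearly $\overline\bfx$, so I need to check that $\Sigma(\bfx)$ corresponds to $L_\bfx$ under the above Birkhoff bijection, i.e., that
\[
\Sigma(\bfx) = \bigl\{\langle \bfp, a\rangle \in \IPGbl^{\preceq \overline\bfx} \mid L_{\langle \bfp, a\rangle}^{\overline\bfx, \infty} \preceq^\infty L_\bfx\bigr\}.
\]
The forward inclusion uses the tightness characterization of Lemma~\ref{tightimage} applied to the tableau $(\bfsig^{(\bfx,i)}, L_\bfx|_{[1,i]})$, which yields $L_\bfx(r) \leq L_{\langle \bfsig^{(\bfx,i)}, a^{(\bfx,i)}\rangle}(r)$ for all $r \leq i$, and hence $L_{\langle \bfsig^{(\bfx,i)}, a^{(\bfx,i)}\rangle}^{\overline\bfx, \infty} \preceq^\infty L_\bfx$; combined with Lemma~\ref{posetbij}, every $\langle \bfp, a\rangle \preceq^\textup{IP} \langle \bfsig^{(\bfx,i)}, a^{(\bfx,i)}\rangle$ then satisfies the displayed inequality. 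The reverse inclusion is a short calculation: if the inequality holds at $r = \#\bfp$, then $n_\bfp + a - 1 \geq L_\bfx(\#\bfp)$, so $a \geq a^{(\bfx, \#\bfp)}$, which directly gives $\langle \bfp, a\rangle \preceq^\textup{IP} \langle \bfsig^{(\bfx, \#\bfp)}, a^{(\bfx, \#\bfp)}\rangle$ via the definition of $\preceq^\textup{IP}$. This final matching of the PIP lower-set description of $\Sigma(\bfx)$ against the Birkhoff description of $L_\bfx$ is what I expect to be the main obstacle, though Lemmas~\ref{tightimage} and~\ref{posetbij} do most of the heavy lifting.
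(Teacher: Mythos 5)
Your proposal follows essentially the same route as the paper: both establish the bijection by decomposing into the prefix-restricted pieces $\mathcal{V}^{\preceq\bfq}_{G,b,\ell}$ and $\mathcal{L}_\textup{con}(\IPGbl)^{\preceq\bfq}$ and then chaining the bijections $f^\bfq$, $g^\bfq$, Birkhoff's $B$, and $\widehat{(h^\bfq\circ\tau)^{-1}}$ from Lemma~\ref{VTabbij}, \S\ref{exttab}, Theorem~\ref{Birk}, and Lemma~\ref{posetbij}. The paper runs this chain forward and checks that the composition, applied to $\bfx$, literally produces the set in (\ref{Sigmap}); you run the chain backward to build $\Sigma^{-1}$ and then check $\Sigma^{-1}\circ\Sigma = \id$. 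Your preliminary steps (a)--(b), confirming that each generator $\la\bfsig^{(\bfx,i)}, a^{(\bfx,i)}\ra$ is genuinely in $\IPGbl$ and that $\Sigma(\bfx)$ is consistent, are sound (the bound $L(\#\bfp)\ge n_\bfp-1$ does indeed come out of the opening paragraph of the proof of Lemma~\ref{tightimage}) and are a useful explicit well-definedness check that the paper leaves implicit in the fact that the chain lands in $\mathcal{L}_\textup{con}(\IPGbl)^{\preceq\bfq}$.

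One soft spot worth tightening: verifying $\Sigma^{-1}\circ\Sigma = \id$ alone only gives injectivity of $\Sigma$; for a bijection you also need $\Sigma\circ\Sigma^{-1} = \id$ (equivalently, surjectivity of $\Sigma$). Your ``main technical step'' --- identifying $\Sigma(\bfx)$ with $\{\la\bfp,a\ra\in\IPGbl^{\preceq\overline\bfx}\mid L^{\overline\bfx,\infty}_{\la\bfp,a\ra}\preceq^\infty L_\bfx^{\overline\bfx,\infty}\}$ --- actually does the work for both directions, since it shows $\Sigma$ restricted to $\mathcal{V}^{\preceq\bfq}_{G,b,\ell}$ coincides with the full composed bijection onto $\mathcal{L}_\textup{con}(\IPGbl)^{\preceq\bfq}$, and every $\mu$ lies in some such piece via its maximal path $\bfq_\mu$. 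You should also make explicit (it is quick) that for $\mu\ne\varnothing$ the maximal element of $\mu$ actually has path $\bfq_\mu$ of the same length as the path of $\Sigma^{-1}(\mu)$, i.e.\ that $U_\mu(\#\bfq_\mu)<\infty$; this is what guarantees $\Sigma^{-1}$ is injective and that the choice of $\bfq_\mu$ in your construction recovers $\overline{\Sigma^{-1}(\mu)}$ exactly rather than a strictly longer path.
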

\begin{proof}
For a \((G,b)\)-path \(\bfq\), let \(\mathcal{V}_{G,b,\ell}^{\preceq \bfq}\) be the full subgraph of \(\mathcal{V}_{G,b,\ell}\) consisting of vertices \(\bfx\) such that \(\overline \bfx \preceq^\textup{pre} \bfq\). Let \(\mathcal{L}_\textup{con}(\IPGbl)^{\preceq \bfq}\) be the set of all consistent lower sets \(\mu \in \mathcal{L}_\textup{con}(\IPGbl)\) such that \(\bfp \preceq^\textup{pre} \bfq\) for all \(\la \bfp, a \ra \in \mu\). Every \(\mu \in \mathcal{L}_\textup{con}(\IPGbl)\) belongs to some such set; indeed, if \(\bfq\) is of maximal length such that \(\la \bfq, b \ra \in \mu\), then every \(\la \bfp, a \ra \in \mu\) must have \(\bfp \preceq^\textup{pre} \bfq\) since \(\mu\) is consistent, and thus \(\mu \in \mathcal{L}_\textup{con}(\IPGbl)^{\preceq \bfq}\). We thus a bijection \(\mathcal{L}(\IPGbl^{\preceq \bfq}) \to \mathcal{L}_\textup{con}(\IPGbl)^{\preceq \bfq}\) given by the identity on lower sets, and we have decompositions:
\begin{align}\label{decomps}
\mathcal{V}_{G,b,\ell} = \bigcup_{\bfq} \mathcal{V}_{G,b,\ell}^{\preceq \bfq}, \qquad
\mathcal{L}_\textup{con}(\IPGbl) = \bigcup_{\bfq} \mathcal{L}_\textup{con}(\IPGbl)^{\preceq \bfq}.
\end{align}

 We have a chain of set bijections:
\begin{align}\label{bijchain}
\mathcal{V}^{\preceq \bfq}_{G,b,\ell} \xrightarrow{f^\bfq} \Tab^{\preceq \bfq} \xrightarrow{g^\bfq} \Tabqinf \xrightarrow{B} \mathcal{L}(J(\Tabqinf)) \xrightarrow{\widehat{(h^\bfq \circ \tau)^{-1}}} \mathcal{L}(\IPGbl^{\preceq \bfq}) \xrightarrow{\textup{id}} \mathcal{L}_\textup{con}(\IPGbl)^{\preceq \bfq},
\end{align}
where \(f^\bfq:= f|_{\mathcal{V}^{\preceq \bfq}_{G,b,\ell}}\) is a bijection by Lemma~\ref{VTabbij}, \(g^\bfq\) is the bijection of \S\ref{exttab}, \(B\) is the bijection of Theorem~\ref{Birk}, and \(\widehat{(h^\bfq \circ \tau)^{-1}}\) is the extension of the poset bijection \((h^\bfq \circ \tau)^{-1}: J(\Tabqinf) \to \IPGbl\) (see Lemma~\ref{exttab}) to the lattices of lower sets.

In view of (\ref{decomps}), to verify the lemma statement, it is enough to check that the composition of bijections (\ref{bijchain}) agrees with the restriction of (\ref{Sigmap}) to \(\mathcal{V}_{G,b,\ell}^{\preceq \bfq}\). Let \(\bfx \in \mathcal{V}_{G,b,\ell}^{\preceq \bfq}\). Then, following \(\bfx\) through the maps in (\ref{bijchain}), we have:
\begin{align*}
\bfx &\xmapsto{f^\bfq} (\overline{\bfx}, L_\bfx) \xmapsto{g^\bfq} L_\bfx^{\bfq, \infty} \xmapsto{B} \{ Q \in J(\Tabqinf) \mid Q \preceq^{\bfq, \infty} L_\bfx^{\bfq, \infty}\}\\
&=\{ Q \in J(\Tabqinf) \mid Q(r) \geq L_\bfx^{\bfq, \infty}(r) \textup{ for } r \in [1,\#\bfq]\}\\
&=\{ L_{ \la \bfp, c \ra}^{\bfq, \infty} \mid \la \bfp, c \ra \in  \IPGbl^{\preceq \bfq}, 
L_{ \la \bfp, c \ra}^{\bfq, \infty}(r)  \geq L_\bfx^{\bfq, \infty}(r) \textup{ for } r \in [1,\#\bfq]\}\\
&=\{L_{ \la  \bfsig^{(\bfx, j)}, c \ra}^{\bfq, \infty} \mid j \in [1, \#\bfx], L_{ \la  \bfsig^{(\bfx, j)}, c \ra}^{\bfq, \infty}(r)  \geq L_\bfx^{\bfq, \infty}(r) \textup{ for } r \in [1,j] \}\\
&=\{L_{ \la  \bfsig^{(\bfx, j)}, c \ra}^{\bfq, \infty} \mid j \in [1, \#\bfx], L_{ \la  \bfsig^{(\bfx, j)}, c \ra}^{\bfq, \infty}(j)  \geq L_\bfx^{\bfq, \infty}(j) \}\\
&=\{L_{ \la  \bfsig^{(\bfx, j)}, c \ra}^{\bfq, \infty} \mid j \in [1, \#\bfx], c   \geq L_\bfx(j) - n_{\bfsig^{(\bfx, j)}}  + 1 \}\\
&\xmapsto{\widehat{(h^\bfq \circ \tau)^{-1}}} 
\{ \la \bfsig^{(\bfx, j)}, c \ra \mid j \in [1, \#\bfx], c   \geq a^{(\bfx, j)} \}\\
&=\{ \la \bfsig^{(\bfx, j)}, c \ra \mid j \in [1, \#\bfx], c   \geq a^{(\bfx, j)} \}\\
&=\mathcal{I} \{
\la \bfsig^{(\bfx,1)}, a^{(\bfx,1)} \ra,
\ldots,
\la \bfsig^{(\bfx,\#\overline \bfx)}, a^{(\bfx,\#\overline \bfx)} \ra
\}\\
&= \Sigma(\bfx),
\end{align*}
as desired.
\end{proof}

\begin{Example}
In Figure~\ref{fig:configtoideal} we show the lower set \(\Sigma(\bfx)\) corresponding to the configuration \(\bfx\) of Figure~\ref{fig:configtotab}.  
\end{Example}

\begin{figure}[h]
\begin{align*}
\mathcal{I}\left \{
\begin{array}{ccc}
\hackcenter{
\begin{tikzpicture}[scale=0.6]
\draw[black, fill =black]  (0,0) circle (3pt);
\draw[black, fill =black]  (3,1) circle (3pt);
\draw[black, fill =black]  (-1,2) circle (3pt);
\draw[black, fill =black]  (5,1) circle (3pt);
\draw[ thick, join=round, cap=round] (0,0)--(3,1)--(-1,2)--(0,0);
\draw[ thick, join=round, cap=round] (3,1)--(5,1);
  \node[above] at (0.5,0.5){ $\raisebox{.5pt}{\textcircled{\raisebox{-.9pt} {0}}}$};   
%%%
%%%
 \draw[ ultra thick, join=round, cap=round, red, ->] (0,0) .. controls ++(.25,0.5) and ++(-.45,0.05) ..(2.6,1);
\end{tikzpicture}
}
&
\hackcenter{
\begin{tikzpicture}[scale=0.6]
\draw[black, fill =black]  (0,0) circle (3pt);
\draw[black, fill =black]  (3,1) circle (3pt);
\draw[black, fill =black]  (-1,2) circle (3pt);
\draw[black, fill =black]  (5,1) circle (3pt);
\draw[ thick, join=round, cap=round] (0,0)--(3,1)--(-1,2)--(0,0);
\draw[ thick, join=round, cap=round] (3,1)--(5,1);
  \node[above] at (0.5,0.5){ $\raisebox{.5pt}{\textcircled{\raisebox{-.9pt} {1}}}$};   
%%%
%%%
 \draw[ ultra thick, join=round, cap=round, red] (0,0) .. controls ++(.25,0.5) and ++(-.45,0.15) ..(3,1);
  \draw[ ultra thick, join=round, cap=round, red,->] (3,1) .. controls ++(.45,-0.45) and ++(-.45,-0.45) .. (4.8,0.9);
\end{tikzpicture}
}
&
\hackcenter{
\begin{tikzpicture}[scale=0.6]
\draw[black, fill =black]  (0,0) circle (3pt);
\draw[black, fill =black]  (3,1) circle (3pt);
\draw[black, fill =black]  (-1,2) circle (3pt);
\draw[black, fill =black]  (5,1) circle (3pt);
\draw[ thick, join=round, cap=round] (0,0)--(3,1)--(-1,2)--(0,0);
\draw[ thick, join=round, cap=round] (3,1)--(5,1);
  \node[above] at (0.5,0.5){ $\raisebox{.5pt}{\textcircled{\raisebox{-.9pt} {1}}}$};   
%%%
%%%
 \draw[ ultra thick, join=round, cap=round, red] (0,0) .. controls ++(.25,0.5) and ++(-.45,0.15) ..(3,1);
  \draw[ ultra thick, join=round, cap=round, red] (3,1) .. controls ++(.45,-0.45) and ++(-.45,-0.45) .. (5,1);
    \draw[ ultra thick, join=round, cap=round, red, ->] (5,1) .. controls ++(-.45,0.45) and ++(+.45,0.45) .. (3,1.2);
\end{tikzpicture}
}
\\
\hackcenter{
\begin{tikzpicture}[scale=0.6]
\draw[black, fill =black]  (0,0) circle (3pt);
\draw[black, fill =black]  (3,1) circle (3pt);
\draw[black, fill =black]  (-1,2) circle (3pt);
\draw[black, fill =black]  (5,1) circle (3pt);
\draw[ thick, join=round, cap=round] (0,0)--(3,1)--(-1,2)--(0,0);
\draw[ thick, join=round, cap=round] (3,1)--(5,1);
  \node[above] at (0.5,0.5){ $\raisebox{.5pt}{\textcircled{\raisebox{-.9pt} {2}}}$};   
%%%
%%%
 \draw[ ultra thick, join=round, cap=round, red] (0,0) .. controls ++(.25,0.5) and ++(-.45,0.15) ..(3,1);
  \draw[ ultra thick, join=round, cap=round, red] (3,1) .. controls ++(.45,-0.45) and ++(-.45,-0.45) .. (5,1);
    \draw[ ultra thick, join=round, cap=round, red] (5,1) .. controls ++(-.45,0.45) and ++(+.45,0.45) .. (3,1.2);
       \draw[ ultra thick, join=round, cap=round, red,->] (3,1.2) .. controls ++(-.45,0.45) and ++(+.45,0.25) .. (-0.8,2.1);
\end{tikzpicture}
}
&
\hackcenter{
\begin{tikzpicture}[scale=0.6]
\draw[black, fill =black]  (0,0) circle (3pt);
\draw[black, fill =black]  (3,1) circle (3pt);
\draw[black, fill =black]  (-1,2) circle (3pt);
\draw[black, fill =black]  (5,1) circle (3pt);
\draw[ thick, join=round, cap=round] (0,0)--(3,1)--(-1,2)--(0,0);
\draw[ thick, join=round, cap=round] (3,1)--(5,1);
  \node[above] at (0.5,0.5){ $\raisebox{.5pt}{\textcircled{\raisebox{-.9pt} {2}}}$};   
%%%
%%%
 \draw[ ultra thick, join=round, cap=round, red] (0,0) .. controls ++(.25,0.5) and ++(-.45,0.15) ..(3,1);
  \draw[ ultra thick, join=round, cap=round, red] (3,1) .. controls ++(.45,-0.45) and ++(-.45,-0.45) .. (5,1);
    \draw[ ultra thick, join=round, cap=round, red] (5,1) .. controls ++(-.45,0.45) and ++(+.45,0.45) .. (3,1.2);
       \draw[ ultra thick, join=round, cap=round, red] (3,1.2) .. controls ++(-.45,0.45) and ++(+.45,0.25) .. (-1,2);
        \draw[ ultra thick, join=round, cap=round, red,->] (-1,2) .. controls ++(-.45,-0.45) and ++(-.45,0.25) .. (-0.3,0.2);
\end{tikzpicture}
}
&
\hackcenter{
\begin{tikzpicture}[scale=0.6]
\draw[black, fill =black]  (0,0) circle (3pt);
\draw[black, fill =black]  (3,1) circle (3pt);
\draw[black, fill =black]  (-1,2) circle (3pt);
\draw[black, fill =black]  (5,1) circle (3pt);
\draw[ thick, join=round, cap=round] (0,0)--(3,1)--(-1,2)--(0,0);
\draw[ thick, join=round, cap=round] (3,1)--(5,1);
  \node[above] at (0.5,0.5){ $\raisebox{.5pt}{\textcircled{\raisebox{-.9pt} {2}}}$};   
%%%
%%%
 \draw[ ultra thick, join=round, cap=round, red] (0,0) .. controls ++(.25,0.5) and ++(-.45,0.15) ..(3,1);
  \draw[ ultra thick, join=round, cap=round, red] (3,1) .. controls ++(.45,-0.45) and ++(-.45,-0.45) .. (5,1);
    \draw[ ultra thick, join=round, cap=round, red] (5,1) .. controls ++(-.45,0.45) and ++(+.45,0.45) .. (3,1.2);
       \draw[ ultra thick, join=round, cap=round, red] (3,1.2) .. controls ++(-.45,0.45) and ++(+.45,0.25) .. (-1,2);
        \draw[ ultra thick, join=round, cap=round, red] (-1,2) .. controls ++(-.45,-0.45) and ++(-.45,0.25) .. (-0.2,-0.2);
          \draw[ ultra thick, join=round, cap=round, red, ->] (-0.2,-0.2) .. controls ++(.45,-0.45) and ++(-.55,-0.5) .. (3,0.7);
\end{tikzpicture}
}
\end{array}
\right \}
\end{align*}
\caption{The consistent lower set \(\Sigma(\bfx) \in \mathcal{L}_{\textup{con}}(\textup{IP}_{G,b,10})\) corresponding to the configuration \(\bfx \in \mathcal{V}_{G,b,10}\) in Figure~\ref{fig:configtotab}. }
\label{fig:configtoideal}
\end{figure}
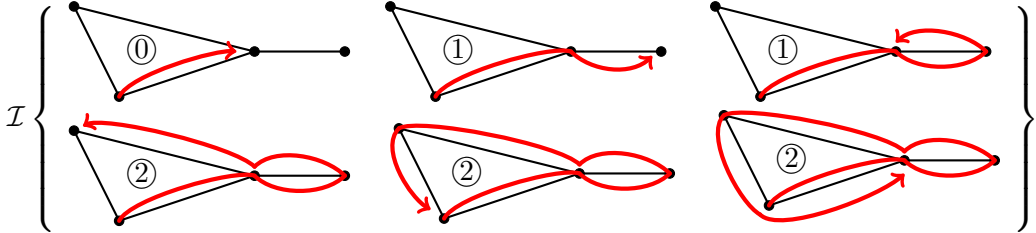

\begin{Theorem}\label{cubeisom}
The bijection \(\Sigma:\mathcal{V}_{G,b,\ell} \to  \mathcal{L}_\textup{con}(\IPGbl)\) extends to an isomorphism of cubical complexes \(\hat \Sigma: \mathcal{S}_{G,b,\ell} \to \mathcal{X}(\IPGbl)\).
\end{Theorem}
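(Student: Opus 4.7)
Lemma~\ref{0skel} already provides the 0-skeleton bijection $\Sigma$, so the remaining task is to verify that $\Sigma$ matches the higher-dimensional cube structures. Since cubes in both $\mathcal{S}_{G,b,\ell}$ and $\mathcal{X}(\IPGbl)$ are uniquely determined by their $2^k$ vertices, it suffices to show that $\Sigma$ sends the vertex set of every cube in $\mathcal{S}_{G,b,\ell}$ bijectively onto the vertex set of some cube in $\mathcal{X}(\IPGbl)$, and vice versa. I would first reduce to a canonical labeling: a cube $[\mathscr{A};\bfx]$ is unchanged by relabeling at a different base vertex, so letting $\mathscr{A}^+\subseteq\mathscr{A}$ be the subset of upward moves, I would set $\bfx':=\mathscr{A}^+\bfx$ and $\mathscr{A}':=\{\mathsf{M}^{-1}:\mathsf{M}\in\mathscr{A}^+\}\cup(\mathscr{A}\setminus\mathscr{A}^+)$ to obtain a relabeling $[\mathscr{A}';\bfx']$ in which every move of $\mathscr{A}'$ is downward at $\bfx'$. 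On the PIP side, $[\mu;K]$ has $\mu$ as its $\subseteq$-maximum vertex, so cubes there are canonically labeled by their top vertex, and we may work under these canonical choices throughout.

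Next I would establish a correspondence between legal downward moves at $\bfx$ and $\preceq^{\textup{IP}}$-maximal elements of $\Sigma(\bfx)$. A downward tail move $\mathsf{T}^{-1}_{v,w,h}$ strips the last horizontal edge from $\overline{\bfx}$ in the tableau $(\overline{\bfx},L_\bfx)$ of Lemma~\ref{VTabbij}, which at the level of $\Sigma(\bfx)$ removes the unique generator of maximum path length; this generator is always $\preceq^{\textup{IP}}$-maximal. A downward corner move $\mathsf{C}^{-1}_{v,w,h}$ leaves $\overline{\bfx}$ fixed but decreases a single label $L_\bfx(i)$ by one, which by the formula $a^{(\bfx,i)}=L_\bfx(i)-n_{\bfsig^{(\bfx,i)}}+1$ removes the generator $\la\bfsig^{(\bfx,i)},a^{(\bfx,i)}\ra$ while leaving $\la\bfsig^{(\bfx,i)},a^{(\bfx,i)}-1\ra$ in the lower set. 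The legality condition for $\mathsf{C}^{-1}_{v,w,h}$, namely that no existing vertex of $\bfx$ blocks the downward descent, should translate exactly into the maximality of $\la\bfsig^{(\bfx,i)},a^{(\bfx,i)}\ra$ in $\Sigma(\bfx)$. Conversely, I would check that every $\preceq^{\textup{IP}}$-maximal element of $\Sigma(\bfx)$ arises from a unique legal downward move $\mathsf{M}(m)$, invoking Lemma~\ref{tightimage} and the join-irreducibility characterization in Lemma~\ref{JIq} to control which elements can be excised while still corresponding to an $\mathcal{R}_{G,b,\ell}$-configuration.

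The most delicate step, and the one I expect to be the main obstacle, is showing that a set $\mathscr{A}$ of legal downward moves at $\bfx$ is commutative if and only if the associated set $K=\{m_{\mathsf{M}}:\mathsf{M}\in\mathscr{A}\}$ is a pairwise $\preceq^{\textup{IP}}$-incomparable collection of maximal elements of $\Sigma(\bfx)$, with $\Sigma(\mathscr{S}\bfx)=\Sigma(\bfx)\setminus\{m_{\mathsf{M}}:\mathsf{M}\in\mathscr{S}\}$ for every $\mathscr{S}\subseteq\mathscr{A}$. The forward direction follows from a geometric case analysis: non-commuting pairs of moves (for instance, two corner moves interacting at adjacent heights along a cycle, as in the example following \S\ref{cmm}) produce one $m_{\mathsf{M}}$ that becomes non-maximal once the other is removed. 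The reverse direction requires showing that incomparable maximal elements correspond to locally independent moves on $\bfx$, so they commute and their joint application removes precisely the matching elements of $K$; here the geometric interpretation of the inconsistency relation $\nleftrightarrow^{\textup{IP}}$ from Lemma~\ref{isPIP} plays a key role, as incompatible pairs have prefix-incomparable paths that would require $\bfx$ to branch. Granting this, $\Sigma$ carries the vertex set $\{\mathscr{S}\bfx:\mathscr{S}\subseteq\mathscr{A}\}$ of $[\mathscr{A};\bfx]$ bijectively onto the vertex set $\{\Sigma(\bfx)\setminus S:S\subseteq K\}$ of $[\Sigma(\bfx);K]$, and conversely every cube in $\mathcal{X}(\IPGbl)$ arises this way. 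Face inclusions match trivially, since omitting $\mathsf{M}$ from $\mathscr{A}$ corresponds to omitting $m_{\mathsf{M}}$ from $K$, so the vertex bijection $\Sigma$ extends to the desired isomorphism $\hat{\Sigma}$ of cubical complexes.
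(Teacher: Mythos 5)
Your overall plan — canonically relabel each cube, then match moves with maximal elements removed from $\Sigma(\bfx)$ — is the right idea, and it is essentially what the paper does. However, your proposal has the orientation of the correspondence $\Sigma$ reversed throughout, which breaks the argument as written. Recall that $\Sigma(\bfb)=\varnothing$ and that $\Sigma(\bfx)$ is generated by $\langle\bfsig^{(\bfx,j)},a^{(\bfx,j)}\rangle$ for $j\in[1,\#\overline{\bfx}]$; since $\langle\bfp,a\rangle\preceq^{\textup{IP}}\langle\bfp,a-1\rangle$, lowering a label $L_\bfx(i)$ by one \emph{adds} the element $\langle\bfsig^{(\bfx,i)},a^{(\bfx,i)}-1\rangle$ to the lower set rather than removing $\langle\bfsig^{(\bfx,i)},a^{(\bfx,i)}\rangle$ from it. So a downward corner move \emph{enlarges} $\Sigma(\bfx)$, and a downward tail move $\mathsf{T}^{-1}_{v,w,h}$ \emph{adds} a horizontal edge to $\overline{\bfx}$ (it is $\mathsf{T}^{+1}$ that strips one), also enlarging $\Sigma(\bfx)$. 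Consequently, your claimed identity $\Sigma(\mathscr{S}\bfx)=\Sigma(\bfx)\setminus\{m_{\mathsf{M}}:\mathsf{M}\in\mathscr{S}\}$ for $\mathscr{S}$ a set of downward moves is false, and your canonical labeling is internally inconsistent: $\bfx'=\mathscr{A}^+\bfx$ is the \emph{most vertical} vertex of the cube, hence $\Sigma(\bfx')$ is the $\subseteq$-\emph{minimum} vertex of the cube in $\mathcal{X}(\IPGbl)$, not the $\subseteq$-maximum $\mu$. The paper instead rewrites every cube as $[\widetilde{\mathscr{A}};\bfy]$ with $\bfy=\mathscr{A}^{-1}\bfx$ the \emph{least} vertical configuration and $\widetilde{\mathscr{A}}$ a set of \emph{upward} moves, so that $\Sigma(\bfy)=\mu$ is the $\subseteq$-maximum of $[\mu;K]$ and upward moves do remove maximal elements.

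Two smaller points. First, you flag the commutativity equivalence as ``the main obstacle,'' but once one works with upward moves the paper simply observes (by inspecting supports) that any set of upward moves with nonempty common support is automatically commutative, so no iff-characterization via $\preceq^{\textup{IP}}$-incomparability is needed — and in any case ``pairwise incomparable maximal elements'' is redundant, since distinct maximal elements of a lower set are always incomparable, and two elements of a \emph{consistent} lower set can never be $\nleftrightarrow^{\textup{IP}}$. Second, Lemmas~\ref{tightimage} and~\ref{JIq} are not invoked directly in the paper's proof of Theorem~\ref{cubeisom}; the surjectivity of the move-to-maximal-element map is handled by a direct case split on whether $\langle\bfsig^{(\bfx,j)},a^{(\bfx,j)}+1\rangle$ lies in $\Sigma(\bfx)$ (corner move) or not (tail move). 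If you flip upward and downward consistently, your outline becomes essentially the paper's proof, but as written it cannot be completed.
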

\begin{proof}
The map \(\Sigma\) is a bijection on the 0-skeletons of the two cubical complexes by Lemma~\ref{0skel}. We now verify that \(\Sigma\) induces a bijection of \(k\)-cubes in these spaces.

Recalling \S\ref{robcube}, let \(\bfY\) be a set of \(2^k\) configurations in \(\mathcal{V}_{G,b,\ell}\) such that there exists \(k\) commutative moves \(\mathscr{A}\) and \(\bfx \in \textup{supp}(\mathscr{A})\) with \(\bfY = \{\mathscr{S} \bfx \mid \mathscr{S} \subseteq \mathscr{A}\}\). We may write \(\mathscr{A} = \mathscr{A}^1 \sqcup \mathscr{A}^{-1}\), where \(\mathscr{A}^1\) and \(\mathscr{A}^{-1}\) are the set of upward and downward moves in \(\mathscr{A}\), respectively. Now let \(\mathscr{A}^\textup{rev} = \{ \mathsf{S}^{-1} \mid \mathsf{S} \in \mathscr{A}^{-1}\}\). Note that \(\mathscr{A}^\textup{rev} \cap \mathscr{A}^1 = \varnothing\), by the non-self-intersecting property of robotic arm configurations, and the fact that \(\mathscr{A}\) is a commutative set of moves. Taking \(\widetilde{\mathscr{A}} = \mathscr{A}^1 \sqcup \mathscr{A}^\textup{rev}\) and \(\bfy = \mathscr{A}^{-1} \bfx\), it is straightforward to see that \(\widetilde{\mathscr{A}}\) is a set of \(k\) commutative moves, \(\bfy \in \textup{supp}(\widetilde{\mathscr{A}})\), and \(\bfY = \{\mathscr{S} \bfy \mid \mathscr{S} \subseteq \widetilde{\mathscr{A}}\}\). Therefore all \(k\)-cubes in \(\mathcal{S}_{G,b,\ell}\) may be written in the form \([\mathscr{A}; \bfx]\), where \(\mathscr{A}\) is a set of \(k\) commutative {\em upward} moves, and \(\bfx \in \textup{supp}(\mathscr{A})\). In fact, one can note from considering the support of upward moves, that if \(\mathscr{A}\) is a set of upward moves, and there exists some \(\bfx \in \textup{supp}(\mathscr{A})\), that \(\mathscr{A}\) is necessarily a commutative set of moves.

Thus to show that \(\Sigma\) induces a bijection of cubical complexes, it will suffice to show that there is a bijection \(\chi\) between the set \(\mathscr{L}_\bfx^1\) of all upward moves on \(\bfx\) and the set \(\textup{Max}(\Sigma(\bfx))\) of maximal elements in \(\Sigma(\bfx)\), with \(\Sigma(\mathscr{S} \bfx) = \Sigma(\bfx) \backslash \{\chi(\mathsf{S}) \mid \mathsf{S} \in \mathscr{S}\}\) for all \(\mathscr{S} \subseteq \mathscr{L}_\bfx\).

By the definitions of upward moves in \S\ref{movecat}, each \(\mathsf{L} \in \mathscr{L}_\bfx^1\) must involve a distinct horizontal segment in \(\bfx\). Thus we may enumerate \(\mathscr{L}_\bfx^1 = \{\mathsf{L}_1, \ldots, \mathsf{L}_k\}\), and choose an indexing \(\{i_1 < \cdots < i_k\} \subseteq [1, \#\overline{\bfx}]\) such that \(\mathsf{L}_t\)  involves the \(i_t\)th horizontal segment in \(\bfx\). 
We thus define a map \(\chi: \mathscr{L}_\bfx^1 \to \Sigma(\bfx)\) by setting \(\chi(\mathsf{L}_t) = \la \bfsig^{( \bfx, i_t )}, a^{(\bfx,i_t)}\ra\). We will show that \(\chi\) satisfies the properties in the above paragraph.

Assume that \(\mathsf{M}\) is a legal upward corner move for \(\bfy \in \mathcal{V}_{G,b,\ell}\). Then \(\mathsf{M}\) has the effect of raising, say, the \(m\)th horizontal segment in \(\bfy\) by one, and leaves other horizontal segments unchanged. Thus, in consideration of (\ref{Sigmap}), \(\Sigma(\mathsf{M} \bfy) = \Sigma(\bfy)\backslash \{\la \bfsig^{(\bfy, m)}, a^{(\bfy, m)}\ra \}\). Assume instead that \(\mathsf{M}\) is a legal upward tail move for \(\bfy\). Then \(\mathsf{M}\) has the effect of converting the last (\(\#\overline{\bfy}\)th) horizontal segment in \(\bfy\) into a vertical segment. Thus, in consideration of (\ref{Sigmap}), \(\Sigma(\mathsf{M} \bfy) = \Sigma(\bfy)\backslash \{\la \bfsig^{(\bfy, \#\overline{\bfy})}, a^{(\bfy, \#\overline{\bfy})}\ra \}\). 
Iteratively applying this argument, it follows then that \(\Sigma(\mathscr{S} \bfx) = \Sigma(\bfx) \backslash \{\chi(\mathsf{S}) \mid \mathsf{S} \in \mathscr{S}\}\) for all \(\mathscr{S} \subseteq \mathscr{L}_\bfx\). Moreover, since the image \(\Sigma(\mathsf{L}_t \bfx) = \Sigma(\bfx) \backslash \{\chi(\mathsf{L}_t)\}\) is a consistent lower set by Lemma~\ref{0skel}, it follows that \(\chi(\mathsf{L}_t)\) is a maximal element in \(\Sigma(\bfx)\) for all \(t \in [1, k]\).

It also follows, since each \(\mathsf{L} \in \mathscr{L}_\bfx^1\) involves distinct horizontal segments in \(\bfx\), that \(\chi\) as constructed is injective. Thus it remains to show that \(\chi\) is surjective to complete the proof. In view of (\ref{Sigmap}), every maximal element of \(\Sigma(\bfx)\) is of the form \(\la \bfsig^{(\bfx, j)}, a^{(\bfx, j)} \ra\) for some \(j \in [1, \#\bfx]\). Fix such a maximal element  \(\la \bfsig^{(\bfx, j)}, a^{(\bfx, j)} \ra\). Since it is maximal, we have by Lemma~\ref{0skel} that \(\Sigma(\bfx) \backslash \{\la \bfsig^{(\bfx, j)}, a^{(\bfx, j)} \ra\} = \Sigma(\bfy)\) for some \(\bfy \in \mathcal{V}_{G,b,\ell}\). If \(\la \bfsig^{(\bfx, j)}, a^{(\bfx, j)}+1 \ra \in \Sigma(\bfx)\), it follows then from (\ref{Sigmap}) that \(\bfy\) has the same number of horizontal segments as \(\bfx\), and each is in the same position, except that the \(j\)th horizontal segment in \(\bfy\) is one unit higher than the \(j\)th horizontal segment in \(\bfx\). Thus \(\bfy\) is achieved from \(\bfx\) by an upward corner move, and so \(\la \bfsig^{(\bfx, j)}, a^{(\bfx, j)} \ra = \chi(\mathsf{L})\) for some \(\mathsf{L} \in \mathscr{L}_\bfx^1\).

 On the other hand, assume that \(\la \bfsig^{(\bfx, j)}, a^{(\bfx, j)}+1 \ra \notin \Sigma(\bfx)\). Then 
\begin{align*}
a^{(\bfx,j)} + 1 = L_\bfx(j) - n_{\bfsig^{(\bfx,j)}} + 2 > \ell + 1 - \#\bfsig^{(\bfx,j)} - n_{\bfsig^{(\bfx,j)}},
\end{align*}
so \(L_\bfx(j) + 1+j > \ell \). Since \(L_{\bfx}(j) \leq \ell - j\), it follows that \(j = \ell - L_{\bfx}(j)\), which implies that \(j = \#\overline{\bfx}\), i.e., the \(j\)th horizontal segment in \(\bfx\) is the last segment in \(\bfx\). It follows then from (\ref{Sigmap}) that \(\bfy\) has \(j-1\) horizontal segments, and each is in the same position as the first \(j-1\) horizontal segments of \(\bfx\). Thus \(\bfy\) is achieved from \(\bfx\) by an upward tail move, and so again \(\la \bfsig^{(\bfx, j)}, a^{(\bfx, j)} \ra = \chi(\mathsf{L})\) for some \(\mathsf{L} \in \mathscr{L}_\bfx^1\). Thus \(\chi\) is surjective, as desired.
\end{proof}

\begin{Corollary}\label{optimove}
There exists an explicit algorithm for optimally moving the robotic arm \(\mathcal{R}_{G,b,\ell}\) between two configurations:
\begin{enumerate}
\item[(I)] in a minimal number of total local moves, or; 
\item [(II)] in a minimal time, given that independent local moves can be performed simultaneously.
\end{enumerate}
\end{Corollary}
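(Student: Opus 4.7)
The plan is to chain together the structural results already established in the paper and then invoke the algorithmic machinery of \cite{ABY}. First, Theorem~\ref{cubeisom} provides the isomorphism of cubical complexes \(\hat\Sigma : \mathcal{S}_{G,b,\ell} \to \mathcal{X}(\IPGbl)\). Since \(\IPGbl\) was shown in Lemma~\ref{isPIP} to be a PIP, Theorem~\ref{pipcat} then guarantees that \(\mathcal{X}(\IPGbl)\), and hence \(\mathcal{S}_{G,b,\ell}\), is a rooted CAT(0) cubical complex.

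Next, I would recall (as noted in \S\ref{configintro}) that questions (I) and (II) translate into finding geodesics in \(\mathcal{S}_{G,b,\ell}\) under the \(L_1\)- and \(L_\infty\)-metrics, respectively. The algorithms of Ardila--Owen--Sullivant \cite{ABY} accept as input a PIP and two consistent lower sets \(\mu_0, \mu_1 \in \mathcal{L}_{\textup{con}}(\IPGbl)\), and output optimal geodesics between the corresponding vertices of \(\mathcal{X}(\IPGbl)\) in both metrics. Under the \(L_1\)-metric the geodesic is a shortest edge-path in the \(1\)-skeleton, corresponding to a minimum-length sequence of local moves, while under the \(L_\infty\)-metric it is a sequence of cubes traversed diagonally, corresponding to executing maximally many independent local moves in parallel.

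Thus the algorithm is as follows: given two configurations \(\bfx, \bfy \in \mathcal{V}_{G,b,\ell}\), compute the consistent lower sets \(\Sigma(\bfx), \Sigma(\bfy) \in \mathcal{L}_{\textup{con}}(\IPGbl)\) via \eqref{Sigmap}, run the \cite{ABY} geodesic algorithm on the PIP \(\IPGbl\) with inputs \(\Sigma(\bfx), \Sigma(\bfy)\), and finally translate the resulting sequence of consistent lower sets back to a sequence of configurations through \(\hat\Sigma^{-1}\). The bijection \(\chi\) constructed in the proof of Theorem~\ref{cubeisom} between maximal elements of a lower set and legal upward moves on the corresponding configuration is precisely what lets us interpret each edge or cube of the geodesic as a concrete legal move (or independent set of moves) for the robotic arm.

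There is essentially no obstacle beyond assembly: the only mild point of care is ensuring that the upward/downward move dichotomy in \S\ref{movecat} is handled correctly when lifting an edge of \(\mathcal{X}(\IPGbl)\)---which is intrinsically undirected---to a concrete robotic move. This is immediate from the proof of Theorem~\ref{cubeisom}, where adding a maximal element to a lower set corresponds to a downward move and removing one corresponds to an upward move, so the orientation is determined by whether the target lower set is a superset or subset of the source.
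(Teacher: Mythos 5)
Your proposal is correct and takes the same approach as the paper, which simply cites Theorem~\ref{cubeisom} together with the algorithms in \cite[\S 6.3--6.5]{ABY}. You spell out the intermediate bookkeeping (computing \(\Sigma(\bfx)\), running the PIP geodesic algorithm, translating back via \(\hat\Sigma^{-1}\) and the bijection \(\chi\), and keeping track of the upward/downward orientation), all of which is consistent with the proof of Theorem~\ref{cubeisom} and merely makes explicit what the paper leaves to the reader.
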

\begin{proof}
Given that in Theorem~\ref{cubeisom} we have a constructed a PIP \(\IPGbl\) explicitly associated with the complex \(\mathcal{S}_{G,b,\ell}\), one may follow the algorithm of \cite[\S6]{ABY} to optimally move between configurations, either in a move-minimizing fashion (\cite[\S6.3]{ABY}), or in a time-minimizing fashion (\cite[\S6.4, 6.5]{ABY}).
\end{proof}

\subsection{Implementation}
In the case where \(G=C_4\) is the cycle graph with 4 vertices, we have implemented an algorithm such as described in (II) in Python, which, using simultaneous independent moves when possible, moves the robotic arm between different configurations in a minimal time period. This program is available to the reader by request.

\section{Distance and diameter in the robotic arm transition graph}

In this final section, we establish the second main result of the paper, giving a tight bound on the diameter of the transition graph \(\mathcal{T}_{G,b,\ell}\), and an explicit diameter value for certain common and interesting families of graphs \(G\).

For \(\bfx \in \mathcal{V}_{G,b,\ell}\), let \(f(\bfx) = (\overline{\bfx}, L_{\bfx})\) be the associated path tableau, and define a function \(M_{\bfx}: [1, \overline{\bfx}] \to \mathbb{Z}_{\geq 0}\) by
\(
M_{\bfx}(i) = \ell - L_{\bfx}(i)  - i+ 1.
\)

\begin{Proposition}\label{distx}
For \(\bfx, \bfy \in \mathcal{V}_{G,b,\ell}\), the distance between \(\bfx\) and \(\bfy\) in the transition graph \(\mathcal{T}_{G,b,\ell}\) is given by 
\begin{align*}
d(\bfx, \bfx') = \sum_{j = r+1}^{\# \overline{\bfx}} M_\bfx(j) +  \sum_{j = r+1}^{\# \overline{\bfy}} M_\bfy(j) + \sum_{j=1}^r |M_{\bfx}(j) - M_{\bfy}(j)|,
\end{align*}
where \(r\) is maximal such that \(\overline{\bfx}_i = \overline{\bfy}_i\) for \(i \leq r\).
\end{Proposition}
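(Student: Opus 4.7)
The plan is to leverage Theorem~\ref{cubeisom} to translate the distance computation into a combinatorial problem about consistent lower sets in the PIP \(\IPGbl\). The transition graph \(\mathcal{T}_{G,b,\ell}\) is by definition the 1-skeleton of \(\mathcal{S}_{G,b,\ell}\), and under \(\hat\Sigma\) its edges correspond to pairs of consistent lower sets differing by the addition or removal of a single maximal element. So I would first reduce the problem to showing that \(d(\bfx,\bfy) = |\Sigma(\bfx) \,\triangle\, \Sigma(\bfy)|\) in the 1-skeleton of \(\mathcal{X}(\IPGbl)\).

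For that identification, the lower bound \(d(\bfx,\bfy) \geq |\Sigma(\bfx) \,\triangle\, \Sigma(\bfy)|\) is immediate because each edge of \(\mathcal{X}(\IPGbl)\) alters a lower set by exactly one element, so it alters the symmetric difference with any fixed target lower set by at most one. For the matching upper bound, I would argue by induction on \(|\mu \,\triangle\, \nu|\): given consistent lower sets \(\mu \neq \nu\), I claim there is a consistent lower set \(\mu'\) adjacent to \(\mu\) in \(\mathcal{X}(\IPGbl)\) with \(|\mu' \,\triangle\, \nu| = |\mu \,\triangle\, \nu| - 1\). If \(\mu \not\subseteq \nu\), pick \(x \in \mu \setminus \nu\) maximal in \(\mu \setminus \nu\) under \(\preceq^{\textup{IP}}\); one checks \(x\) must then be maximal in \(\mu\) itself (any \(y \succ x\) in \(\mu\) must lie in \(\nu\), forcing \(x \in \nu\) by the lower-set property, a contradiction), so \(\mu' = \mu \setminus \{x\}\) works. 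If \(\mu \subsetneq \nu\), pick \(x \in \nu \setminus \mu\) minimal and set \(\mu' = \mu \cup \{x\}\); the result is a consistent lower set since it is contained in \(\nu\). This completes the identification \(d(\bfx,\bfy) = |\Sigma(\bfx) \,\triangle\, \Sigma(\bfy)|\), and this is the step I expect to require the most care, though it is essentially combinatorial once the PIP framework is set up.

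The remainder is an explicit count. From the proof of Lemma~\ref{0skel}, one has the disjoint union description
\begin{align*}
\Sigma(\bfx) = \bigsqcup_{j=1}^{\#\overline{\bfx}} \{\la \bfsig^{(\bfx,j)}, c \ra \mid a^{(\bfx,j)} \leq c \leq \ell + 1 - j - n_{\bfsig^{(\bfx,j)}}\},
\end{align*}
and the number of allowed values of \(c\) for each fixed \(j\) is precisely \(M_{\bfx}(j) = \ell - L_\bfx(j) - j + 1\) (after substituting \(a^{(\bfx,j)} = L_\bfx(j) - n_{\bfsig^{(\bfx,j)}} + 1\)). I then split the count of \(|\Sigma(\bfx) \,\triangle\, \Sigma(\bfy)|\) according to whether \(j \leq r\) or \(j > r\). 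For \(j > r\) with \(j \leq \#\overline{\bfx}\), the path \(\bfsig^{(\bfx,j)}\) diverges from every \(\bfsig^{(\bfy,j')}\) (different lengths for \(j' \leq r\), and distinct at position \(r+1\) for \(j' > r\)), so every element indexed by \(\bfsig^{(\bfx,j)}\) contributes to \(\Sigma(\bfx) \setminus \Sigma(\bfy)\), yielding \(\sum_{j=r+1}^{\#\overline{\bfx}} M_\bfx(j)\); symmetrically for \(\bfy\). For \(j \leq r\), the paths agree, so the elements of \(\Sigma(\bfx)\) and \(\Sigma(\bfy)\) indexed by the common path \(\bfsig^{(j)} := \bfsig^{(\bfx,j)} = \bfsig^{(\bfy,j)}\) are parametrized by two intervals in \(c\) with the same upper endpoint \(\ell + 1 - j - n_{\bfsig^{(j)}}\) but lower endpoints \(a^{(\bfx,j)}\) and \(a^{(\bfy,j)}\), so the symmetric difference of these two sets has size \(|a^{(\bfx,j)} - a^{(\bfy,j)}| = |L_\bfx(j) - L_\bfy(j)| = |M_\bfx(j) - M_\bfy(j)|\). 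Summing the three contributions gives the claimed formula.
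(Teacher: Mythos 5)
Your proof is correct, but it takes a genuinely different route from the paper's. The paper does not actually supply an argument: it cites \cite[Proposition 6.2]{ABCG}, where the $G = A_n$ case is proved via coral-tableau combinatorics, and asserts that the argument ``with minor alteration, may be adapted to a proof for arbitrary $G$.'' You instead give a self-contained proof that leans entirely on the machinery developed earlier in this paper: you use Theorem~\ref{cubeisom} to transport the problem to the $1$-skeleton of $\mathcal{X}(\IPGbl)$, establish that graph distance there equals $|\Sigma(\bfx) \triangle \Sigma(\bfy)|$ (your induction on the symmetric difference, including the check that a $\preceq^{\textup{IP}}$-maximal element of $\mu \setminus \nu$ is maximal in $\mu$, and the dual argument for $\mu \subsetneq \nu$, is correct and is essentially the standard hyperplane-counting fact for CAT(0) cube complexes), and then compute $|\Sigma(\bfx) \triangle \Sigma(\bfy)|$ directly from the explicit description of $\Sigma(\bfx)$ extracted from Lemma~\ref{0skel}, noting that the fiber over $j$ has exactly $M_\bfx(j)$ elements and that the fibers are disjoint across distinct path prefixes. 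The arithmetic reductions $|a^{(\bfx,j)} - a^{(\bfy,j)}| = |L_\bfx(j) - L_\bfy(j)| = |M_\bfx(j) - M_\bfy(j)|$ are right. Your approach buys a complete, verifiable proof inside the present paper's framework, which is arguably preferable to the paper's deferral; what it costs relative to \cite{ABCG} is nothing, since their tableau argument is effectively what you recover after translating through $\Sigma$, and you additionally make explicit the PIP-distance lemma that the citation leaves implicit.
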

\begin{proof}
In \cite[Proposition 6.2]{ABCG} an analogous statement for the transition graph of a robotic arm in a tunnel is proved using the combinatorics of coral tableaux. Translating from the combinatorial setup of that paper to this one yields a proof of the above statement in the case \(G = A_n\) using the combinatorics of path tableaux, which, with minor alteration, may be adapted to a proof for arbitrary \(G\).
\end{proof}

\begin{Theorem}\label{diamthm}
Assume that \(n=|V_G| \). Then we have
\begin{align}\label{diamineq}
\textup{diam}(\mathcal{T}_{G,b,\ell}) \leq 2 \left\lfloor \frac{(n - 1)(\ell + 1)^2}{2n}\right \rfloor,
\end{align}
where equality is achieved if there exist two cycle-free \((G,b)\)-paths \(\bfp, \bfp'\) of length \(\min\{\ell, n-1\}\) with \(\bfp_1 \neq \bfp_1'\).
\end{Theorem}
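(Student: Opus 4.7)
The plan is to combine the distance formula of Proposition~\ref{distx} with a sharp bound on the functional $S(\bfy) := \sum_j M_\bfy(j)$ for individual configurations, and then to construct explicit extremal pairs. Since each $M$-value is non-negative, the inequality $|M_\bfx(j) - M_{\bfx'}(j)| \leq M_\bfx(j) + M_{\bfx'}(j)$ applied to Proposition~\ref{distx} yields $d(\bfx,\bfx') \leq S(\bfx) + S(\bfx') \leq 2\max_\bfy S(\bfy)$, reducing (\ref{diamineq}) to showing $\max_\bfy S(\bfy) \leq \lfloor (n-1)(\ell+1)^2/(2n)\rfloor$.

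For $\bfy \in \mathcal{V}_{G,b,\ell}$, set $\bfp := \overline\bfy$, $k := \#\bfp$, $m := n_\bfp$. The identity $M_\bfy(j) = \ell - L_\bfy(j) - j + 1$ gives $S(\bfy) = k(\ell+1) - \sum_j L_\bfy(j) - \binom{k+1}{2}$. Since each cycle-free segment of $\bfp$ has at most $n-1$ edges, the $j$-th edge lies in at least the $\lceil j/(n-1)\rceil$-th segment, so $\sum_j d_\bfp(j) \geq mk - (n-1)\binom{m}{2}$ by a direct count. Combined with $L_\bfy(j) \geq d_\bfp(j) - 1$ (from Definition~\ref{deftab}(ii)) and the admissibility constraint $L_\bfy(k) + k \leq \ell$, i.e., $k + m \leq \ell + 1$, this yields $S(\bfy) \leq k(\ell+2-m) - \binom{k+1}{2} + (n-1)\binom{m}{2}$. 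Optimizing this concave quadratic in $k$ at the boundary $k = \ell+1-m$, the feasibility condition $k \in ((m-1)(n-1), m(n-1)]$ becomes $n(m-1) \leq \ell+1 \leq nm$, and the upper bound becomes $F(m) := \tfrac12[(\ell+1-m)(\ell+2-m) + (n-1)m(m-1)]$.

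The key algebraic step, to be verified by direct expansion, is the factorization
\[
2nF(m) - (n-1)(\ell+1)^2 = (\ell+1-nm)(\ell+1-n(m-1)).
\]
In the feasibility range the right-hand side is non-positive, so $F(m) \leq (n-1)(\ell+1)^2/(2n)$; since $S(\bfy)$ is an integer, $\max_\bfy S(\bfy) \leq \lfloor (n-1)(\ell+1)^2/(2n)\rfloor$, proving (\ref{diamineq}).

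Under the hypothesis, take cycle-free paths $\bfp, \bfp'$ of length $\min\{\ell, n-1\}$ with distinct initial edges, and extend each by backtracking to produce paths of length $k^* = \ell + 1 - m^*$ (with $m^* := \lceil(\ell+1)/n\rceil$) whose cycle-free decompositions have their first $m^*-1$ segments of maximal length $n-1$, making the inequalities of the previous paragraphs sharp. Lemma~\ref{VTabbij} converts the associated minimal-label tableaux into configurations $\bfx, \bfx'$ with $S(\bfx) = S(\bfx') = F(m^*) = \lfloor (n-1)(\ell+1)^2/(2n)\rfloor$; since their horizontal projections start with distinct edges, $r = 0$ in Proposition~\ref{distx}, yielding $d(\bfx, \bfx') = 2\lfloor(n-1)(\ell+1)^2/(2n)\rfloor$. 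The principal difficulty is this backtracking construction, particularly for $\ell > n-1$: one must verify that the extensions stay within $G$, produce non-self-intersecting workspace configurations of length exactly $\ell$, and that $F(m^*)$ matches the floor expression in the discrete arithmetic.
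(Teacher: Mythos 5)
Your overall strategy (reduce to bounding $S(\bfy) := \sum_j M_\bfy(j)$ via the distance formula, then construct an extremal pair via backtracking) parallels the paper's, but your upper-bound argument takes a genuinely different route: instead of the paper's pointwise estimate $L_\bfy(j) \geq \lfloor(j-1)/(n-1)\rfloor$ followed by a direct inductive evaluation of $\omega(\ell,n) = \sum_{j=1}^m \bigl(\ell - \lfloor(j-1)/(n-1)\rfloor - j + 1\bigr)$, you introduce the pair $(k,m) = (\#\overline\bfy,\, n_{\overline\bfy})$, derive a quadratic upper bound $g(k,m)$ on $S(\bfy)$, and then optimize in $k$ and invoke the identity $2nF(m) - (n-1)(\ell+1)^2 = (\ell+1-nm)(\ell+1-n(m-1))$. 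The factorization is correct, and it gives a slicker closed-form verification than the paper's induction --- a nice contribution.

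However, there is a genuine gap in the aggregation step. You only show $F(m) \leq (n-1)(\ell+1)^2/(2n)$ when $m$ lies in the ``feasibility range'' $n(m-1) \leq \ell+1 \leq nm$, i.e.\ $m = m^* := \lceil(\ell+1)/n\rceil$. But nothing forces a configuration $\bfy$ to satisfy $n_{\overline\bfy} = m^*$; the number of segments in the maximal-length cycle-free suffix decomposition can be anywhere between $\lceil k/(n-1)\rceil$ and $k$. For such $\bfy$ your chain of inequalities gives $S(\bfy) \leq g(k,m) \leq F(m)$, and the factorization shows $F(m) > (n-1)(\ell+1)^2/(2n)$ whenever $m \neq m^*$ (both factors on the right have the same sign), so the argument cannot conclude. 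A concrete instance: take $G = C_3$, $\ell = 7$, and let $\overline\bfy$ traverse a single edge $\{a,b\}$ back and forth four times, so $k = m = 4$; this is realized by a valid configuration, and $F(4) = \tfrac12\bigl[(4)(5) + 2\cdot 12\bigr] = 22 > 21 = \lfloor 128/6\rfloor$. The resolution is that $g(4,4)$ is a very loose upper bound here (the actual maximum of $S(\bfy)$ over such configurations is $16$), so the two-variable bound is simply too weak for $m$ away from $m^*$. Your own pointwise estimate $L_\bfy(j) \geq d_{\bfp}(j) - 1 \geq \lceil j/(n-1)\rceil - 1 = \lfloor(j-1)/(n-1)\rfloor$ is exactly the paper's bound and does not depend on $m$ at all; summing it directly and taking the maximum over the single variable $k$ (which is what the paper's $\omega(\ell,n)$ computes) closes the gap cleanly, and your factorization identity could then replace the paper's induction as an alternative way to evaluate $\omega(\ell,n)$.

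For the equality case, your backtracking construction matches the paper's in spirit, and you correctly flag the points that need to be checked (the extensions stay in $G$, the workspace path is non-self-intersecting, and the discrete arithmetic works out); as written this part is a sketch rather than a proof, but the plan is sound.
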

\begin{proof}
Let \(m\) be maximal such that \(\lfloor (m-1)/(n-1) \rfloor + m \leq \ell\). Define a function 
\begin{align*}
\omega(\ell,n) = \sum_{j=1}^m \ell -\left \lfloor \frac{j-1}{n-1} \right \rfloor - j + 1.
\end{align*}
Note that expanding the sum yields
\begin{align*}
 \ell + (\ell -1 ) + \cdots + (\ell - n+2) + (\ell - n) + \cdots + (\ell - 2n+2) + (\ell - 2n) + \cdots + (\ell - 3n+2) + \cdots 
\end{align*}
with the last term in the sum being 2 if \(n\) divides \(\ell\), and 1 otherwise. Thus \(\omega(\ell,n)\) is the sum over all integers \(t \in [1,\ell]\) omitting those of the form \( t= \ell - kn +1\). In other words, we may also write
\begin{align}\label{omegaform}
\omega(\ell,n) = \sum_{\substack{t \in [1,\ell] \\ t \not\equiv  \ell + 1 \pmod n}} \hspace{-5mm}t.
\end{align}

Keeping \(n\) fixed, we now show by induction on \(\ell\) that
\begin{align}\label{IndClaimA}
\omega(\ell,n) =  \left\lfloor \frac{(n - 1)(\ell + 1)^2}{2n}\right \rfloor.
\end{align}
for all \(\ell,n \geq 0\).
First assume that \( \ell < n\). Then no \(t \in [1,\ell]\) is congruent to \(\ell + 1\) modulo \(n\), so we have 
\begin{align}\label{IndClaimB}
\omega(\ell,n) = 1 + 2 + \cdots + \ell = \frac{\ell(\ell+1)}{2}.
\end{align}
Now note that
\begin{align*}
\frac{(\ell + 1)^2}{(\ell + 3)} < \ell + 1 \leq n,
\end{align*}
so \((\ell + 1)^2 < n(\ell +3)\), which yields \(\ell^2 - n \ell - n + 2 \ell + 1 < 2n\), and therefore
\begin{align*}
   0 &= \floor*{\frac{\ell^2 -\ell n -n + 2\ell +\ell}{2n}} =     \floor*{\frac{\ell n(\ell +1)-(n-1)(\ell +1)^2}{2n}}= 
  \frac{\ell (\ell +1)}{2} -\floor*{\frac{(n-1)(\ell +1)^2}{2n}}.
\end{align*}
Thus (\ref{IndClaimA}) holds by (\ref{IndClaimB}).

Now, for the induction step, we may assume that \(\ell = k+ n\) for some \(k \geq 0\), and that (\ref{IndClaimA}) holds for \(\omega(k,n)\). Then by consideration of (\ref{omegaform}), we have
\begin{align*}
\omega(\ell, n) &= (k+n) + (k+n-1) + \cdots + (k+2) + \omega(k, n)\\
&=(k+n) + (k+n-1) + \cdots + (k+2)+ \left\lfloor \frac{(n - 1)(k + 1)^2}{2n}\right \rfloor\\
&= \frac{(n-1)(2k+n+2)}{2}+\floor*{ \frac{(n-1)(k+1)^2}{2n}}\\
    &= \floor*{\frac{(n-1)(n^2 + 2kn + 2n + k^2 +2k+1)}{2n}}\\
    &= \floor*{\frac{(n-1)(k+n+1)^2}{2n}}\\
    &=  \left\lfloor \frac{(n - 1)(\ell + 1)^2}{2n}\right \rfloor,
\end{align*}
completing the induction step and the proof of (\ref{IndClaimA}).

Now we show that \( d(\bfy, \bfb) \leq \omega(\ell,n)\) for all \(\bfy \in \mathcal{V}_{G,b,\ell}\). Let \(\bfy \in \mathcal{V}_{G,b,ell}\), and let \(f(\bfy) = (\overline{\bfy}, L_{\bfy})\) be the associated \((G,b,\ell)\)-tableau. We note that by Definition~\ref{deftab}(ii), for all \(k \in \mathbb{Z}_{\geq 0}\) we have \(|\{ i \in [1,\#\bfy] \mid L_\bfy(i) = k\} \leq n-1.\) Therefore \(L_{\bfy}(i) \geq \lfloor (i-1)/(n-1) \rfloor\) for all \(i \in [1, \#\bfy]\). Thus
\begin{align*}
\#\overline{\bfy} \leq \ell - L_\bfy(\#\overline{\bfy}) \leq \ell - \lfloor (\#\overline{\bfy}-1)/(n-1) \rfloor 
\end{align*}
by Definition~\ref{deftab}(iii). As \(m\) is maximal such that \(m \leq \ell - \lfloor (m-1)/(n-1) \rfloor\), it follows then that \(\#\overline{\bfy} \leq m\). 
Therefore we have
\begin{align*}
d(\bfy, \bfb) &= \sum_{j=1}^{\#\overline{\bfy}} M_\bfy(j) =  \sum_{j=1}^{\#\overline{\bfy}} \ell - L_\bfy(j) - j + 1 \leq  \sum_{j=1}^{\#\overline{\bfy}} \ell - \left \lfloor \frac{j-1}{n-1} \right \rfloor - j + 1\\
&\leq  \sum_{j=1}^{m} \ell - \left \lfloor \frac{j-1}{n-1} \right \rfloor - j + 1
= \omega(\ell,n).
\end{align*}

Therefore, for any \(\bfy, \bfy' \in \mathcal{V}_{G,b,\ell}\), we have
\begin{align*}
d(\bfy, \bfy') \leq d(\bfy, \bfb) + d(\bfy', \bfb) \leq 2 \omega(\ell,n) = 2 \left\lfloor \frac{(n - 1)(\ell + 1)^2}{2n}\right \rfloor,
\end{align*}
establishing (\ref{diamineq}).

Now assume that there exist two cycle-free \((G,b)\)-paths \(\bfp, \bfp'\) of length \(\min\{\ell, n-1\}\) with \(\bfp_1 \neq \bfp_1'\). If \(\ell < n-1\), then note that \(m = \ell\), and set \(\bfq := \bfp\) and \(\bfq' := \bfp'\). If \(\ell \geq n-1\), define a \((G,b)\)-path \(\bfq\) with \(\#\bfq = m\) by setting \(\bfq\) to be the \(m\)-length prefix path in
\begin{align*}
(\bfp_1 \cdots \bfp_{n-1} \bfp_{n-1} \cdots \bfp_1 \bfp_1)^N,
\end{align*}
where \(N\gg 0\). Informally, \(\bfq\) is the path that consists of continuously tracing the path \(\bfp\) back and forth for \(m\) total steps. Define \(\bfq'\) similarly, using the path \(\bfp'\).

In any case now, setting \(L(i) = \lfloor (i-1)/(n-1) \rfloor\), we check that \((\bfq, L) \in \Tab\) by verifying axioms (i--iii) of Definition~\ref{deftab}. Parts (i),(iii) are clear by the construction of \(L\). Now assume that \(i < j\) and \(q_{i-1} = q_j\) (or \(q'_{i-1} = q'_j\)). Then we have that \(j \geq i-1 +n\), so 
\begin{align*}
L(j) \geq L(i-1+n) = \left \lfloor \frac{i-1+n -1}{n-1} \right\rfloor = \left\lfloor \frac{ i-1}{n-1} + \frac{n-1}{n-1}\right \rfloor =\left \lfloor \frac{i-1}{n-1}\right\rfloor + 1 = L(i) + 1,
\end{align*}
verifying (ii). 

Therefore by Lemma~\ref{VTabbij} there exists \(\bfx, \bfx' \in \mathcal{V}_{G,b,\ell}\) such that \(f(\bfx) = (\bfq, L)\) and \(f(\bfx') = (\bfq', L)\). Then we have
\begin{align*}
\sum_{j=1}^m M_\bfx(j) =  \sum_{j=1}^m \ell - L(j) - j + 1 = \sum_{j=1}^m \ell - \left \lfloor \frac{j-1}{n-1} \right\rfloor - j + 1 = \omega(\ell,n).
\end{align*}
Thus, since \(\bfq_1 \neq \bfq_1'\), by Proposition~\ref{distx} and (\ref{diamineq}), we have
\begin{align*}
2 \omega(\ell,n) \geq \textup{diam}(\mathcal{T}_{G,b,\ell}) \geq d(\bfx, \bfx') = \sum_{j = r+1}^{\# \overline{\bfx}} M_\bfx(j) +  \sum_{j = r+1}^{\# \overline{\bfy}} M_\bfy(j) = 2 \omega(\ell,n),
\end{align*}
so
\begin{align*}
\textup{diam}(\mathcal{T}_{G,b,\ell}) = 2 \left\lfloor \frac{(n - 1)(\ell + 1)^2}{2n}\right \rfloor,
\end{align*}
as desired.
\end{proof}

\begin{Example}
Consider the case of the cycle graph \(C_3\), a fixed vertex \(b \in C_3\), and the robotic arm of length \(\ell = 5\). There are two cycle-free paths of length 2 originating at \(b\); one proceeding clockwise and the other counterclockwise. Thus by Theorem~\ref{diamthm} we expect that
\begin{align*}
\textup{diam}(\mathcal{T}_{C_3, b, 5}) = 2 \left \lfloor \frac{(3-1)(5+1)^2}{2 \cdot 3} \right \rfloor = 24.
\end{align*}
Indeed, in Figure~\ref{fig:fullspace}, we see that the path from the leftmost to rightmost nodes in the 1-skeleton \(\mathcal{T}_{C_3,b,5}\) in the image yields a maximum distance of 24.
\end{Example}

\begin{Remark}
In general, if there do not exist two cycle-free \((G,b)\)-paths \(\bfp, \bfp'\) of length \(\min\{\ell, n-1\}\) with \(\bfp_1 \neq \bfp_1'\), equality in Theorem~\ref{diamthm} is not achieved. For example, in \cite[\S6]{ABCG}, the case of the path graph \(G = A_n\), where \(b\) is an endpoint of the graph, is considered. In this setting the diameter of the transition graph is shown to be strictly less than the bound of Theorem~\ref{diamthm}.
\end{Remark}

\begin{Remark}\label{RemConnex}
With \(\omega(\ell,n)\) defined as in (\ref{IndClaimA}), Theorem~\ref{diamthm} gives us that \(2 \omega(\ell,n)\) is a tight bound on the diameter of the transition graph \(\mathcal{T}_{G,b,\ell}\) when \(G\) has \(n\) vertices. Moreover, this bound is achieved when \(G\) possesses two cycle-free \((G,b)\)-paths \(\bfp, \bfp'\) of length \(\min\{\ell, n-1\}\) with \(\bfp_1 \neq \bfp_1'\), as is the case for large families of graphs including all cycle graphs, complete graphs, and graphs which possess a Hamiltonian circuit. 

We now briefly remark on a connection of \(\omega(\ell,n)\) with other known sequences. The sequence \((\omega(\ell,n))_{\ell = 0}^\infty\) makes an appearance in the OEIS \cite{OEIS} as the sequences
\href{http://oeis.org/A002620}{[A002620]},
\href{http://oeis.org/A000212}{[A000212]},
\href{http://oeis.org/A033436}{[A033436]},
\href{http://oeis.org/A033437}{[A033437]},
\href{http://oeis.org/A033438}{[A033438]},
\href{http://oeis.org/A033439}{[A033439]},
for \(n=2,3,4,5,6, 7\), respectively (excluding the first term in the OEIS sequence). As noted in these OEIS links, the value \(\omega(\ell,n)\) gives the number of edges in the {\em n-partite Turan graph} \(T(\ell +1, n)\); see \cite{Wolf}. We also have \(\omega(\ell,n) = R_{\ell+1}(1,n)\), where \(R_{\ell+1}(1,n)\) is the {\em elliptic troublemaker sequence} as defined in \cite[Definition 23, Proposition 24(viii)]{Stange}. 
\end{Remark}

%%%%%%%%%%%%%%%%%%%%%%%%%%%%%%%%%%%%%%%%

\end{document}